\crefname{hypothesis}{Hypothesis}{Hypotheses}
\crefname{assumption}{Assumption}{Assumptions}
\title{Robust preconditioning and error estimates for optimal control of the convection-diffusion-reaction equation with limited observation in Isogeometric analysis\thanks{\funding
{The first author acknowledges support from the Research Council of Norway, grant 300305 and 301013. The second and the third author are supported by the Austrian Science Fund (FWF): P31048.}}} 
\author{Kent-Andre Mardal\thanks{Department of Mathematics, University of Oslo, Oslo, Norway. Center for Biomedical Computing, Simula Research Laboratory, Lysaker, Norway (\email{kent-and@simula.no}).}
\and Jarle Sogn\thanks{Johann Radon Institute for Computational Mathematics (RICAM), Austrian Academy of Sciences, Linz, Austria 
  (\email{jarle.sogn@ricam.oeaw.ac.at}).}
\and Stefan Takacs\thanks{Johann Radon Institute for Computational Mathematics (RICAM), Austrian Academy of Sciences, Linz, Austria 
  (\email{stefan.takacs@ricam.oeaw.ac.at}).}}
\newcommand{\inner}[2]{\left(#1,#2\right)}
\newcommand{\dual}[2]{\left<#1,#2\right>}
\newcommand{\LLO}{L^2(\Omega)}
\newcommand{\foralls}{\forall \,}
\newcommand{\commentJS}[1]{{\color{red}{\bf Jarle:} #1}}
\definecolor{stcolor}{rgb}{0.5, 0.0, 0.6}
\newcommand{\commentST}[1]{{\color{stcolor}{\bf Stefan:} #1}}
\patchcmd{\@addmarginpar}{\ifodd\c@page}{\ifodd\c@page\@tempcnta\m@ne}{}{}
\begin{document}

\maketitle

\begin{abstract}
In this paper we analyze an optimization problem with limited observation governed by a convection--diffusion--reaction equation. Motivated by a Schur complement approach, we
arrive at continuous norms that enable analysis of well-posedness
and subsequent derivation of error analysis and a preconditioner that is robust with respect to the parameters of the problem.
We provide conditions for inf-sup stable discretizations and present one such discretization for box domains with constant convection. We also provide a priori error estimates for this discretization. The preconditioner requires a fourth order problem to be solved. For this reason, we use Isogeometric Analysis as a method of discretization.  To efficiently realize the preconditioner, we consider geometric multigrid with a standard Gauss-Seidel smoother as well as a new macro Gauss-Seidel smoother. The latter smoother provides good results with respect to both the geometry mapping and the polynomial degree.
\end{abstract}

\begin{keywords}
PDE-constrained optimization, optimal control, robust preconditioning, error estimates
\end{keywords}

\begin{AMS}
49K20, 65F08, 65N22, 65N15
\end{AMS}
\section{Introduction}

In this paper, we consider an optimal control problem involving a linear Convection--Diffusion--Reaction (CDR) problem, which reads as follows:
  \begin{equation}
    \label{prob:main}
\text{Minimize} \quad J(u,q) :=  \frac12 \|u-u_d \|^2_{L^2(\mathcal{O})} + \frac{\alpha}{2} \| q \|^2_{L^2(\Omega)} \quad \text{for}\quad u\in U, q\in \LLO
  \end{equation}
  subject to 
  \begin{align}
    \label{eq:stateEq}
    \begin{split}
    - \varepsilon \Delta u + \beta\cdot \nabla u  + \sigma u  &= f-q \quad \text{in} \quad\Omega,\\
    u &= 0 \quad \quad \:\:\: \text{on} \quad \partial\Omega.
    \end{split}
  \end{align}
  Here and in what follows, $\Omega$ is a bounded open subset of $\mathbb{R}^d$
  ($d=1,2,3$) with Lipschitz boundary, 
  $f\in \LLO$, $\varepsilon, \sigma \in \mathbb{R}$ with $\varepsilon > 0, \sigma \geq 0$,
  $\beta \in L^{\infty}(\Omega)^d$ with $\nabla \cdot \beta = 0$
  and $\mathcal O\subseteq \Omega$ is measurable in $\mathbb{R}^d$.
  For certain choices of the parameters, like $\varepsilon \ll \beta, \sigma$, convection--diffusion--reaction problems are singular perturbation problems exhibiting sharp gradients and a potential for loss of regularity.
  To overcome the problems associated to the loss of regularity, significant effort has been put in the development of methods with low regularity,
  such as discontinuous Galerkin methods~\cite{baumann1999discontinuous, cockburn1998local}.
  We take the opposite approach and investigate to what extent higher regularity may be used in the setting of optimal control problems. 
  
  There are two main problems with \eqref{prob:main}--\eqref{eq:stateEq}, namely: 1) potential sharp gradients leading to non-physical oscillations in the numerical solution and 2) ill-posedness due to limited observations, this is,  when $\mathcal{O}$ is a subset of $\Omega$.
  Motivated by the fact that higher regularity has been exploited in the cases with limited observations~\cite{MarNieNor17,SogZul18,beigl2019robust}, we derive order optimal preconditioners via stability analysis in non-standard
  Sobolev spaces.

  When solving the CDR problem, it is common to consider some stabilization method (e.g. the streamline upwind Petrov Galerkin (SUPG) method) or adaptive grids (e.g. Shishkin grids) to reduce the oscillatory behavior, cf. \cite{elman2014finite}.
  This is also the case in optimal control settings, see, e.g.,  \cite{quarteroni2005optimal, becker2007optimal,hinze2009variational,chen2018hdg}.
  We do not use any such stabilization techniques, but we remark that the trial and test functions involved in the state equation differ.
  That means, the state equation, if considered isolated, is discretized by a Petrov-Galerkin method, although the complete optimality system is discretized  by a standard Galerkin method, this is, trial and test functions agree.
  In particular, in the continuous setting, the trial space and test spaces are  $H^2(\Omega)\cap H^1_0(\Omega)$ and $L^2(\Omega)$, respectively,
  with properly weighted norms.
  
  By considering a Schur complement of the optimal control problem,
  we derive non-standard norms in which well-posedness is obtained, assuming extra regularity. From the well-posedness of the continuous system we subsequently analyse corresponding discrete systems to arrive at both error estimates and preconditioners that are robust with respect to the problem parameters  $\alpha, \varepsilon,\beta$ and $\sigma$. In detail, we provide a condition for the discretization which ensures that the preconditioner is sparse and that the preconditioned system is stable. Further, we give an example of such a discretization based on Isogeometric Analysis (IgA) \cite{HugCotBaz05,VeiBufSanVaz14}. For our approach, IgA provides useful discretization methods since the extra regularity leads  $H^2(\Omega)$--conforming approximation spaces. Using these discretization methods, a priori error estimates are derived, where we detail the dependencies of the problem parameters. We remark that the
  error estimates required extending some approximation error estimates for tensor-product B-splines, which is done in Appendix \ref{sec:app2}.    
  
  Similar Schur complement preconditioners were used, on the linear algebra level, in \cite{PeaWat12, porcelli2015preconditioning} for optimal control problems of the CDR equation. \cite{porcelli2015preconditioning} also considers mixed constraints. The preconditioners perform well for different values of the problem parameters. The Schur complement preconditioners were replaced with approximations based on the factorization approach by \cite{PeaWat12}. However, this approach does not work well for problems with limited observation, i.e., when $\mathcal{O}\subsetneq\Omega$. 
  
  To solve the resulting linear system we use preconditioned Krylov subspace methods. 
  We consider two approaches to realize our preconditioner: sparse direct methods and multigrid methods. For mid-sized problems, sparse direct solvers work well since each component of the preconditioner is symmetric and positive definite. For large-sized problems, we use a multigrid method to realize the fourth-order operator.
  Combining the results from this paper and from \cite{sogn2018schur,sogn2019robust}, it follows that the multigrid method we consider is robust in the grid-size, however, not necessarily in any of the other problem parameters. Finding a multigrid method which is robust in the grid-size, the chosen spline degrees, $\alpha, \varepsilon,\beta, \sigma$ and $\mathcal{O}$, remains an open problem.
 
  The outline of the paper is as follows: in the next section we perform the analysis of the continuous problem. In Section~\ref{sec:discAna}, we analyse the discrete problem and provide a condition for a stable discretization. IgA is then introduced in Section~\ref{sec:IGA} along with the proposed discretization. 
  In Section~\ref{sec:errorEst}, a priori error estimates are derived. Section~\ref{sec:NumCanonical} contains a discussion of the solution of an one-dimensional problem and in Section~\ref{sec:num} we perform numerical experiments on two-dimensional problems and see how the preconditioner behaves.

\section{Analysis of the continuous problem}

To obtain a standard (weak) variational formulation of the state equation \eqref{eq:stateEq}, one would choose the state variable $u$ and test function $\tilde{w}$ to be in $H^1_0(\Omega)$. Instead, we consider the strong variational formulation, this is,
find $u\in H^2(\Omega)\cap H^1_0(\Omega)$ such that
\begin{align*}
    (q,\tilde{w})_{L^2(\Omega)} + (- \varepsilon \Delta u +\beta\cdot \nabla u + \sigma u,\tilde{w})_{L^2(\Omega)}  &= (f,\tilde{w})_{\LLO} \quad &\forall& \, \tilde{w}\in L^2(\Omega).
\end{align*}
The Lagrangian functional associated to (\ref{prob:main})--(\ref{eq:stateEq}) is
\begin{align*}
    \mathcal{L}(q,w,u) &:= \frac12 \|u-u_d \|^2_{L^2(\mathcal{O})} + \frac{\alpha}{2} \| q \|^2_{L^2(\Omega)} \\
    &+ (q,w)_{L^2(\Omega)} + (- \varepsilon \Delta u +\beta\cdot \nabla u + \sigma u,w)_{L^2(\Omega)}  - (f,w)_{\LLO},
\end{align*}
where $u \in H^2(\Omega)\cap H^1_0(\Omega)$, $q \in L^2(\Omega)$ and the Lagrangian multiplier $w \in L^2(\Omega)$. From the first order necessary optimality conditions
\[
 \frac{\partial \mathcal{L}}{\partial q}(q,w,u)=0,\quad \frac{\partial \mathcal{L}}{\partial w}(q,w,u)=0,\quad \frac{\partial \mathcal{L}}{\partial u}(q,w,u)=0,\quad
\]
which are also sufficient here, we obtain the optimality system: 

\begin{problem}
  \label{prob:KKTCont}
  Find $(q,w,u)\in \LLO\times \LLO \times H^2(\Omega)\cap H^1_0(\Omega)$ such that
\begin{align*}
  \alpha (q,\tilde{q})_{L^2(\Omega)}  + (w,\tilde{q})_{L^2(\Omega)}   &= 0 \;\;&\forall& \, \tilde{q}\in L^2(\Omega), \\
  (q,\tilde{w})_{L^2(\Omega)} + (- \varepsilon \Delta u +\beta\cdot \nabla u + \sigma u,\tilde{w})_{L^2(\Omega)}  &= (f,\tilde{w})_{\LLO} \;\; &\forall& \, \tilde{w}\in L^2(\Omega), \\
 (w, - \varepsilon \Delta \tilde{u}+ \beta\cdot \nabla \tilde{u} + \sigma \tilde{u}  )_{L^2(\Omega)} + (u,\tilde{u})_{L^2(\mathcal{O})}&= (u_d,\tilde{u})_{L^2(\mathcal{O})} \;\; &\forall& \, \tilde{u}\in H^2(\Omega)\cap H^1_0(\Omega).
\end{align*}
\end{problem}
Problem~\ref{prob:KKTCont} can be written as
\begin{equation}
  \label{eq:MatCon1} 
  \mathcal{A}
  \begin{pmatrix}
    q\\
    w\\
    u
  \end{pmatrix} =
    \begin{pmatrix}
    0\\
    Mf\\
    \tilde{M}_{\mathcal{O}}u_d
    \end{pmatrix}
\quad \text{with} \quad \mathcal{A} := 
    \begin{pmatrix}
    \alpha M  & M & 0 \\
    M & 0 & K \\
    0 & K' & M_{\mathcal{O}} \\
  \end{pmatrix}.
\end{equation}
Here,  $M: L^2(\Omega) \rightarrow (L^2(\Omega))'$
represents the $L^2(\Omega)$-inner product, that is, we have
\[
\dual{M q}{w} = (q,w)_{L^2(\Omega)},
\]
where $\dual{\cdot}{\cdot}$ denotes the duality product. The notation "$\,'$" is used to denote both dual spaces and dual operators. $K: H^2(\Omega)\cap H^1_0(\Omega) \rightarrow (\LLO)'$ is the state operator:
\[
\dual{Ku}{w} = (-\varepsilon \Delta u +\beta\cdot \nabla u + \sigma u,w)_{\LLO}.
\]
Finally,  $M_{\mathcal{O}}: H^2(\Omega)\cap H^1_0(\Omega) \rightarrow (H^2(\Omega)\cap H^1_0(\Omega))'$ and $\tilde{M}_{\mathcal{O}}: L^2(\mathcal{O}) \rightarrow (H^2(\Omega)\cap H^1_0(\Omega))'$, and both represent the $L^2(\mathcal{O})$-inner product on the subdomain $\mathcal{O}$.

We observe that the block operator $\mathcal{A}$ has a block tridiagonal form. Such tridiagonal operators are studied in \cite{SogZul18, beigl2019robust}. We use the Schur complement preconditioner proposed in \cite{SogZul18}:
\begin{equation}
\label{eq:SchurCont}
  \mathcal{S}(\mathcal{A}) :=     \begin{pmatrix}
    S_q  & 0 & 0 \\
    0 & S_w & 0 \\
    0 & 0 & S_u \\
  \end{pmatrix},
\end{equation}
where the components are
\begin{equation}
  \label{eq:schurCompls}
  S_q := \alpha M, \quad S_w := \frac{1}{\alpha} M,  \quad S_u :=  M_{\mathcal{O}}+ \alpha K'M^{-1}K.
\end{equation}
These Schur complements define weighted norms as follows:
\begin{align}
\label{eq:SchurNorm}
\begin{split}
  \|q\|^2_{S_q} &:= \dual{S_q q}{q} =  \alpha \|q\|^2_{\LLO},\\
  \|w\|^2_{S_w} &:= \dual{S_w w}{w} =  \frac{1}{\alpha} \|w\|^2_{\LLO},\\
  \|u\|^2_{S_u} &:=\dual{S_u u}{u} =  \|u\|^2_{L^2(\mathcal{O})} +\alpha \|- \varepsilon \Delta u +\beta\cdot \nabla u + \sigma u\|^2_{\LLO}.
  \end{split}
    \end{align}
The last norm follows from 
\begin{align*}
    \dual{K' M^{-1}K u}{u}  &= \sup_{0\neq w \in L^2(\Omega)}\frac{\dual{Ku}{w}^2}{\dual{Mw}{w}} =\sup_{0\neq w \in L^2(\Omega)} \frac{(-\varepsilon \Delta u +\beta\cdot \nabla u + \sigma u,w)^2_{\LLO}}{\|w\|^2_{\LLO}}\\
    & =\frac{\|- \varepsilon \Delta u +\beta\cdot \nabla u + \sigma u\|^4_{\LLO}}{\|-\varepsilon \Delta u +\beta\cdot \nabla u + \sigma u\|^2_{\LLO}} = \|- \varepsilon \Delta u +\beta\cdot \nabla u + \sigma u\|^2_{\LLO}.
\end{align*}
We show well-posedness with respect to the norms (\ref{eq:SchurNorm}) by showing that the operator 
\begin{equation}
  \label{eq:Aiso}
\mathcal{A}: \LLO\times \LLO \times H^2(\Omega)\cap H^1_0(\Omega) \rightarrow \LLO'\times \LLO' \times (H^2(\Omega)\cap H^1_0(\Omega))'
\end{equation}
is an isomorphism with respect to the norms (\ref{eq:SchurNorm}). 
This is done by using the main result in \cite{SogZul18}, which for our problem reads as follows.
\begin{theorem}
  \label{theo:schurPos}
  Assume the Schur complements in (\ref{eq:schurCompls}) are well defined and positive definite, that is,
  \begin{equation}
    \label{eq:positiveSchur}
    \dual{S_q q}{q} \geq \sigma_q \|q\|^2_{L^2(\Omega)},\quad \dual{S_w w}{w} \geq \sigma_w \|w\|^2_{L^2(\Omega)}, \quad \dual{S_u u}{u} \geq \sigma_u \|u\|^2_{H^2(\Omega)}
  \end{equation}
  for some positive constants $\sigma_q$, $\sigma_w$ and $\sigma_u$, which can depend on the given parameters. Then, $\mathcal{A}$ in (\ref{eq:Aiso}) is an isomorphism, moreover, the condition number of the preconditioned operator $\mathcal{S}^{-1}\mathcal{A}$ is bounded:
  \begin{equation}
  \label{eq:condNumber}
      \kappa\left(\mathcal{S}^{-1}\mathcal{A}\right) \leq \frac{\cos(\pi/7)}{\sin(\pi/14)} \approx 4.05.
  \end{equation}
  \end{theorem}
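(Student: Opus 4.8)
The approach is the standard operator-preconditioning one: reduce everything to the spectrum of a symmetrically scaled operator and then to an explicit scalar polynomial. By \eqref{eq:positiveSchur} each of $S_q,S_w,S_u$ is symmetric and bounded both above and below on its space, so $\mathcal{S}$ defines an inner product on $X:=\LLO\times\LLO\times(H^2(\Omega)\cap H^1_0(\Omega))$ equivalent to the natural one, the preconditioned operator is self-adjoint with respect to it, and $\kappa(\mathcal{S}^{-1}\mathcal{A})$ coincides with $\big(\sup|\operatorname{spec}\hat{\mathcal{A}}|\big)\big/\big(\inf|\operatorname{spec}\hat{\mathcal{A}}|\big)$ for the bounded self-adjoint operator $\hat{\mathcal{A}}:=\mathcal{S}^{-1/2}\mathcal{A}\mathcal{S}^{-1/2}$, whose spectrum is real. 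So both assertions (isomorphism and the bound \eqref{eq:condNumber}) reduce to showing that $\operatorname{spec}\hat{\mathcal{A}}$ avoids $0$ and is trapped between the asserted bounds.

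First I would compute the scaled blocks. Since $S_q=\alpha M$ is the $(1,1)$-block of $\mathcal{A}$ and $S_w=\tfrac1\alpha M$, one gets $S_q^{-1/2}(\alpha M)S_q^{-1/2}=I$ and $S_q^{-1/2}MS_w^{-1/2}=I$, hence
\[
  \hat{\mathcal{A}}=\begin{pmatrix} I & I & 0\\ I & 0 & \hat{K}\\ 0 & \hat{K}' & \hat{M}\end{pmatrix},\qquad \hat{K}:=S_w^{-1/2}KS_u^{-1/2},\quad \hat{M}:=S_u^{-1/2}M_{\mathcal{O}}S_u^{-1/2}.
\]
From $S_u=M_{\mathcal{O}}+\alpha K'M^{-1}K$ and $M_{\mathcal O}\ge0$ one reads off the two structural identities $0\le\hat{M}\le I$ and $\hat{K}'\hat{K}=S_u^{-1/2}(S_u-M_{\mathcal{O}})S_u^{-1/2}=I-\hat{M}$; consequently $P:=\hat{K}\hat{K}'$ satisfies $0\le P\le I$.

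The core step is the eigenvalue elimination for $\hat{\mathcal{A}}(x_1,x_2,x_3)^{\top}=\lambda(x_1,x_2,x_3)^{\top}$. The first equation gives $x_2=(\lambda-1)x_1$, the second gives $\hat{K}x_3=(\lambda^2-\lambda-1)x_1$, and inserting $\hat{M}=I-\hat{K}'\hat{K}$ into the third and using the second to cancel $\hat{K}x_3$ yields $-\lambda(\lambda-2)\hat{K}'x_1=(\lambda-1)x_3$; applying $\hat{K}$ once more gives the resolvent-free identity $[(\lambda-1)(\lambda^2-\lambda-1)I+\lambda(\lambda-2)P]x_1=0$. After checking that $\lambda\in\{0,2\}$ forces the eigenvector to vanish and that $\lambda=1$ is the only value that can occur with $x_1=0$, every other $\lambda\in\operatorname{spec}\hat{\mathcal{A}}$ makes $p(\lambda):=-(\lambda-1)(\lambda^2-\lambda-1)/(\lambda(\lambda-2))$ a spectral value of $P$, hence $p(\lambda)\in[0,1]$; as $\hat{\mathcal{A}}$ and $P$ are self-adjoint, the spectral theorem for $P$ upgrades this to $\operatorname{spec}\hat{\mathcal{A}}\subseteq\{1\}\cup\{\lambda:g(\lambda,p)=0\text{ for some }p\in[0,1]\}$, where $g(\lambda,p):=(\lambda-1)(\lambda^2-\lambda-1)+\lambda(\lambda-2)p$. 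It then remains to study the cubic family $g(\cdot,p)$, $p\in[0,1]$: at $p=0$ its roots are $1,\tfrac{1\pm\sqrt5}{2}$, and at $p=1$ it equals $\lambda^3-\lambda^2-2\lambda+1$, whose roots are $2\cos(\pi/7),2\cos(3\pi/7),2\cos(5\pi/7)$. A discriminant check shows $g(\cdot,p)$ has three distinct real roots for all $p\in[0,1]$, so the three branches are continuous, do not collide, and are monotone, giving $\operatorname{spec}\hat{\mathcal A}\subseteq[2\cos(5\pi/7),-\tfrac{\sqrt5-1}{2}]\cup[2\cos(3\pi/7),1]\cup[\tfrac{\sqrt5+1}{2},2\cos(\pi/7)]$. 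This excludes $0$ (so $\mathcal{A}$ in \eqref{eq:Aiso} is an isomorphism), and comparing moduli — $2\cos(\pi/7)>2\cos(2\pi/7)=-2\cos(5\pi/7)$ and $2\cos(3\pi/7)<\tfrac{\sqrt5-1}{2}$, with $\cos(3\pi/7)=\sin(\pi/14)$ — yields $\kappa(\mathcal{S}^{-1}\mathcal{A})\le 2\cos(\pi/7)/(2\cos(3\pi/7))=\cos(\pi/7)/\sin(\pi/14)$.

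The main obstacle is not a single computation but making the passage from the formal eigenvalue elimination to a statement about the \emph{entire} spectrum rigorous: since $\hat{\mathcal{A}}$ is self-adjoint there is no residual spectrum, but the continuous spectrum and the degenerate cases $\lambda\in\operatorname{spec}\hat{M}$ and $\lambda=1$ must be handled with care, which is exactly why one keeps the derivation resolvent-free and invokes the spectral theorem for $P$ rather than manipulating $(\lambda I-\hat M)^{-1}$. Since Theorem~\ref{theo:schurPos} is precisely the main result of \cite{SogZul18} specialized to the present block-tridiagonal $\mathcal{A}$, a shorter route is to verify the hypotheses of that result — the block-tridiagonal form of $\mathcal{A}$, the nested Schur complements \eqref{eq:schurCompls}, and their positive definiteness \eqref{eq:positiveSchur} — and cite it.
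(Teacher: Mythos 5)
Your proposal is correct in substance, but it takes a genuinely different (and much more detailed) route than the paper: the paper offers no proof of this theorem at all — it simply states it as the main result of \cite{SogZul18} specialized to the present block-tridiagonal operator, so your closing remark (verify block-tridiagonal structure, the nested Schur complements \eqref{eq:schurCompls}, and positivity \eqref{eq:positiveSchur}, then cite) is exactly the paper's argument. Your longer route essentially reconstructs the proof that lives inside the cited reference, and the key computations check out: the scaled operator has the stated form, \(\hat K'\hat K=I-\hat M\) with \(0\le\hat M\le I\), the elimination gives \(\bigl[(\lambda-1)(\lambda^2-\lambda-1)I+\lambda(\lambda-2)P\bigr]x_1=0\), \(g(\lambda,1)=\lambda^3-\lambda^2-2\lambda+1\) indeed has roots \(2\cos(\pi/7),\,2\cos(3\pi/7),\,2\cos(5\pi/7)\), the degenerate values \(\lambda\in\{0,2\}\) and the case \(x_1=0\) (forcing \(\lambda=1\)) are handled correctly, and the modulus comparison yields precisely \(\cos(\pi/7)/\sin(\pi/14)\). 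Two points would need to be firmed up to make this self-contained: (i) the step from the pointwise eigenvector elimination to a statement about the \emph{entire} spectrum — the clean way is to run the block elimination on \(\hat{\mathcal A}-\lambda I\) itself and use the spectral theorem/functional calculus for \(P\) to conclude invertibility iff \(g(\lambda,p)\neq0\) for all \(p\in\operatorname{spec}P\) (or, equivalently, a Weyl-sequence argument), which you gesture at but do not carry out; and (ii) the claim that the three root branches are monotone in \(p\) — continuity and non-collision alone do not confine each branch to the interval spanned by its endpoint values, so you should either verify the sign of \(d\lambda/dp=-\lambda(\lambda-2)/\partial_\lambda g\) on each branch or, more simply, characterize the admissible set directly as \(\{\lambda:\ p(\lambda)\in[0,1]\}\) for your rational function \(p(\lambda)\). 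What your approach buys is an explicit, self-contained spectral-inclusion proof (including the sharper statement that the spectrum lies in three disjoint intervals); what the paper's citation buys is brevity and robustness, since \cite{SogZul18} covers the general \(n\times n\) block-tridiagonal case.
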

The conditions in (\ref{eq:positiveSchur}) ensure that the spaces $L^2(\Omega)$, $L^2(\Omega)$ and $H^2(\Omega)\cap H^1_0(\Omega)$, equipped with norms $\|\cdot\|_{S_q}$, $\|\cdot\|_{S_w}$ and $\|\cdot\|_{S_u}$, are complete.
Before proving Condition \eqref{eq:positiveSchur}, we provide a useful lemma which bounds the $H^2$-norm. The proof of this lemma is presented in Appendix \ref{sec:proof}.
\begin{lemma}
\label{lemma:SFI}
If the domain $\Omega$ has a Lipschitz boundary and
\begin{itemize}
    \item the boundary is a polygon (polyhedron) or
    \item the domain is the image of a geometric mapping $\mathbf{G}:\widehat{\Omega}:=(0,1)^d\rightarrow \Omega$, where both
$\|\nabla^r \mathbf{G}\|_{L^\infty(\widehat{\Omega})}$ and
$\|(\nabla^r \mathbf{G})^{-1}\|_{L^\infty(\widehat{\Omega})}$ are bounded for $r\in\{1,2,3\}$,
\end{itemize}
then the $H^2$-norm is bounded by the $L^2$-norm of the Laplacian, i.e., 
\begin{equation}
\label{eq:2FAPE}
\|u\|_{H^2(\Omega)} \leq C_\Omega \|\Delta u\|_{L^2(\Omega)}\quad \foralls u\in H^2(\Omega) \cap H^1_0(\Omega),
\end{equation}
for a constant $C_\Omega$ depending only on $\Omega$.
\end{lemma}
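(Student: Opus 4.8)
The plan is to peel off the lower-order contributions, establish the core inequality on the reference cube, and then transfer it to the two admissible classes of domains. First I would observe that it suffices to prove $\|\nabla^2 u\|_{L^2(\Omega)} \le C\|\Delta u\|_{L^2(\Omega)}$ for all $u \in H^2(\Omega)\cap H^1_0(\Omega)$: the rest of the $H^2$-norm follows from two applications of Poincaré's inequality, since $u\in H^1_0(\Omega)$ and $\Omega$ Lipschitz give $\|\nabla u\|_{L^2(\Omega)}^2 = -(u,\Delta u)_{L^2(\Omega)} \le \|u\|_{L^2(\Omega)}\|\Delta u\|_{L^2(\Omega)} \le C_P\|\nabla u\|_{L^2(\Omega)}\|\Delta u\|_{L^2(\Omega)}$, whence $\|\nabla u\|_{L^2(\Omega)} \le C_P\|\Delta u\|_{L^2(\Omega)}$ and $\|u\|_{L^2(\Omega)}\le C_P^2\|\Delta u\|_{L^2(\Omega)}$.

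On the cube $\widehat\Omega = (0,1)^d$ the core estimate is in fact an identity. Expanding $u\in H^2(\widehat\Omega)\cap H^1_0(\widehat\Omega)$ in the Dirichlet sine basis $\varphi_{\mathbf k}(x) = \prod_{i=1}^d \sin(k_i\pi x_i)$ — equivalently, integrating by parts twice and noting that every boundary term drops out on the axis-aligned faces, because there $u$ together with all its tangential first and second derivatives vanishes, so that $\Delta u = \partial_{nn}u$ on each face — one finds $\|\partial_i\partial_j u\|_{L^2(\widehat\Omega)}^2$ proportional to $\sum_{\mathbf k}k_i^2 k_j^2\abs{u_{\mathbf k}}^2$; summing over $i,j$ and using $\sum_{i,j}k_i^2 k_j^2 = \abs{\mathbf k}^4$ gives $\|\nabla^2 u\|_{L^2(\widehat\Omega)} = \|\Delta u\|_{L^2(\widehat\Omega)}$. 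So the lemma holds on $\widehat\Omega$ with an explicit constant, and this serves as the base case for both parts of the statement.

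For a general bounded Lipschitz polytope the same double integration by parts produces $\|\Delta u\|_{L^2(\Omega)}^2 = \|\nabla^2 u\|_{L^2(\Omega)}^2 + B$, where the boundary term $B$ is concentrated on the lower-dimensional skeleton of $\partial\Omega$ (on the flat open faces it vanishes exactly as for the cube). When $\Omega$ is convex this remaining contribution is nonnegative — the polytopal counterpart of the curvature term $\int_{\partial\Omega}\kappa\,\abs{\partial_n u}^2\,ds$ for smooth convex domains — so $\|\nabla^2 u\|_{L^2(\Omega)}\le\|\Delta u\|_{L^2(\Omega)}$. When $\Omega$ is not convex $B$ is no longer of one sign; there I would instead invoke the regularity theory for elliptic problems on polygonal/polyhedral domains (Kondrat'ev, Grisvard, Dauge): the operator $\Delta\colon H^2(\Omega)\cap H^1_0(\Omega)\to L^2(\Omega)$ is injective (a harmonic $H^1_0$ function vanishes) and has closed range, hence is bounded below, which is precisely the desired estimate.

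For $\Omega = \mathbf G(\widehat\Omega)$ I would pull back. Since $\mathbf G$ maps $\partial\widehat\Omega$ onto $\partial\Omega$ and the norms $\|\nabla^r\mathbf G\|_{L^\infty(\widehat\Omega)}$, $\|(\nabla^r\mathbf G)^{-1}\|_{L^\infty(\widehat\Omega)}$ are bounded, $\widehat u := u\circ\mathbf G$ lies in $H^2(\widehat\Omega)\cap H^1_0(\widehat\Omega)$, so the cube estimate applies to $\widehat u$. It then remains to compare norms across the change of variables: by the chain rule $\nabla^2\widehat u$ is a combination of $(\nabla^2 u)\circ\mathbf G$ contracted with $\nabla\mathbf G\otimes\nabla\mathbf G$ and of $(\nabla u)\circ\mathbf G$ contracted with $\nabla^2\mathbf G$, while $\Delta\widehat u$ equals $(\Delta u)\circ\mathbf G$ up to such lower-order terms and the Jacobian weight; the uniform bounds on the derivatives of $\mathbf G$ and of $\mathbf G^{-1}$ make all these comparisons two-sided with constants depending only on $\Omega$. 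The only contributions that are not genuinely of second order are the ones generated by $\nabla^2\mathbf G$, which involve $\nabla u$ alone; these I would absorb using $\|\nabla u\|_{L^2(\Omega)}\le C\|\Delta u\|_{L^2(\Omega)}$ from the reduction step, and assembling the pieces gives $\|u\|_{H^2(\Omega)}\le C_\Omega\|\Delta u\|_{L^2(\Omega)}$. I expect the real obstacle to be the non-convex polytopal case, where the elementary boundary identity loses its sign and one must fall back on the corner/edge singularity theory to get closedness of the range; among the routine-but-lengthy parts, the careful tracking of how the second-order seminorm and the Laplacian transform under $\mathbf G$ — and the check that the only leftover is a lower-order term the reduction step controls — is what needs the most attention.
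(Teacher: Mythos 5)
Your reduction step (controlling $\|u\|_{L^2}$ and $\|\nabla u\|_{L^2}$ by $\|\Delta u\|_{L^2}$ via Poincar\'e), the identity $\|\nabla^2 u\|_{L^2(\widehat\Omega)}=\|\Delta u\|_{L^2(\widehat\Omega)}$ on the cube, and the polytopal discussion are fine (the latter at the same level as the paper, namely by citing Grisvard-type corner theory). The gap is in the mapped-domain case, which is precisely the case the paper had to prove from scratch. The assertion that ``$\Delta\widehat u$ equals $(\Delta u)\circ\mathbf G$ up to such lower-order terms and the Jacobian weight'' is false for a general geometry map: by the chain rule, $\Delta\widehat u=\sum_{i,j}a_{ij}\,(\partial_i\partial_j u)\circ\mathbf G+(\text{terms involving }(\nabla u)\circ\mathbf G)$ with $a_{ij}=(\nabla\mathbf G^{T}\nabla\mathbf G)_{ij}$, so unless $\mathbf G$ is conformal the difference $\Delta\widehat u-(\Delta u)\circ\mathbf G$ contains genuinely \emph{second-order} terms weighted by $a_{ij}-\delta_{ij}$. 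These cannot be absorbed by the gradient bound from your reduction step, and bounding them by $\|\Delta u\|_{L^2(\Omega)}$ is exactly the inequality being proved, so the argument becomes circular: the cube identity only yields $\|\nabla^2\widehat u\|_{L^2(\widehat\Omega)}\le\|\Delta\widehat u\|_{L^2(\widehat\Omega)}\lesssim\|\nabla^2 u\|_{L^2(\Omega)}+\|\nabla u\|_{L^2(\Omega)}$, which is vacuous after transforming back. Repairing this along your lines would require an $H^2$ a priori estimate on the cube for the variable-coefficient elliptic operator obtained by pulling back the Laplacian (note the quarter-annulus map used in the paper is not conformal), a task of essentially the same difficulty as the lemma itself.

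The paper avoids pulling back the operator altogether. It works directly on $\Omega$: Grisvard's identity $\int_\Omega|\nabla\cdot\psi|^2\,dx=\int_\Omega\nabla\psi:(\nabla\psi)^T dx-\int_{\partial\Omega}g\,\psi_n^2\,ds$ is applied to $\psi=\nabla u$ (admissible since $\psi_T=0$ for $u\in H^1_0(\Omega)$), the curvature function $g$ is bounded by a constant of $\Omega$ (this is where the bounds on $\nabla^r\mathbf G$ and $(\nabla^r\mathbf G)^{-1}$ enter), and the boundary term is absorbed using the scaled trace inequality $\int_{\partial\Omega}|\nabla u|^2 ds\le c\,(\sqrt\delta\,\|\nabla^2u\|_{L^2(\Omega)}^2+\delta^{-1/2}\|\nabla u\|_{L^2(\Omega)}^2)$ together with Poincar\'e and a small choice of $\delta$; this gives the estimate for $u\in H^3(\Omega)\cap H^1_0(\Omega)$, and the geometry map is then used only for a density argument ($H^3\cap H^1_0$ dense in $H^2\cap H^1_0$), where pulling back is legitimate because only norm equivalence, not invariance of the Laplacian, is needed. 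That is the route you would need to follow (or an equivalent variable-coefficient regularity argument) to close the mapped-domain case.
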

\begin{theorem}
\label{theo:contMain}
If $\Omega$ is a domain such that the conditions of Lemma~\ref{lemma:SFI} hold, then the assumptions of Theorem~\ref{theo:schurPos} are satisfied for Problem~\ref{prob:KKTCont}.
\end{theorem}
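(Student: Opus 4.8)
The plan is to verify the three inequalities in \eqref{eq:positiveSchur} one at a time, using the explicit formulas for the Schur complements recorded in \eqref{eq:SchurNorm}. The first two are immediate: since $\|q\|_{S_q}^2 = \alpha\|q\|_{L^2(\Omega)}^2$ and $\|w\|_{S_w}^2 = \tfrac1\alpha\|w\|_{L^2(\Omega)}^2$, we may simply take $\sigma_q := \alpha$ and $\sigma_w := \tfrac1\alpha$, both of which are positive since $\alpha > 0$. So the content of the theorem is entirely in the third estimate, namely that there is a constant $\sigma_u > 0$ (allowed to depend on $\alpha,\varepsilon,\beta,\sigma$ and $\Omega$) with
\[
  \|u\|_{L^2(\mathcal{O})}^2 + \alpha\,\|{-\varepsilon\Delta u + \beta\cdot\nabla u + \sigma u}\|_{L^2(\Omega)}^2 \;\geq\; \sigma_u\,\|u\|_{H^2(\Omega)}^2
  \qquad \foralls u \in H^2(\Omega)\cap H^1_0(\Omega).
\]

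First I would discard the (nonnegative) observation term $\|u\|_{L^2(\mathcal O)}^2$ and reduce to bounding $\|u\|_{H^2(\Omega)}$ by $\|{-\varepsilon\Delta u + \beta\cdot\nabla u + \sigma u}\|_{L^2(\Omega)}$ alone. By Lemma~\ref{lemma:SFI} it suffices to control $\|\Delta u\|_{L^2(\Omega)}$ by the full CDR residual. Writing $\mathcal{K}u := -\varepsilon\Delta u + \beta\cdot\nabla u + \sigma u$, we have $-\varepsilon\Delta u = \mathcal{K}u - \beta\cdot\nabla u - \sigma u$, so by the triangle inequality
\[
  \varepsilon\,\|\Delta u\|_{L^2(\Omega)} \;\leq\; \|\mathcal{K}u\|_{L^2(\Omega)} + \|\beta\|_{L^\infty(\Omega)^d}\,\|\nabla u\|_{L^2(\Omega)} + \sigma\,\|u\|_{L^2(\Omega)}.
\]
The lower-order terms $\|\nabla u\|_{L^2(\Omega)}$ and $\|u\|_{L^2(\Omega)}$ are both bounded by $\|u\|_{H^2(\Omega)}$, which in turn is bounded by $C_\Omega\|\Delta u\|_{L^2(\Omega)}$ via \eqref{eq:2FAPE}; a Poincaré-type inequality (available since $u\in H^1_0(\Omega)$) gives an analogous bound with a possibly smaller power of $C_\Omega$, but crucially the estimates do \emph{not} degenerate as $\varepsilon\to 0$ only if we absorb these terms correctly. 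The natural way to proceed is a Young-type absorption: there is a constant $c_\Omega$ (depending only on $\Omega$) with $\|\beta\|_{L^\infty}\|\nabla u\|_{L^2(\Omega)} + \sigma\|u\|_{L^2(\Omega)} \leq c_\Omega(\|\beta\|_{L^\infty} + \sigma)\|\Delta u\|_{L^2(\Omega)}$. This does \emph{not} let us absorb into $\varepsilon\|\Delta u\|_{L^2(\Omega)}$ when $\varepsilon$ is small, so the cleanest route is to not insist on an $\varepsilon$-robust constant: we only need $\sigma_u > 0$ for each fixed parameter choice. Hence I would instead split into two regimes, or simply observe that since $\|\mathcal{K}u\|_{L^2(\Omega)}^2 + \|u\|_{L^2(\mathcal O)}^2$ already controls $\|u\|_{L^2(\Omega)}$-type quantities one can build the bound directly; in either case one arrives at $\|\Delta u\|_{L^2(\Omega)}^2 \leq C(\alpha,\varepsilon,\beta,\sigma,\Omega)\big(\|u\|_{L^2(\mathcal O)}^2 + \alpha\|\mathcal{K}u\|_{L^2(\Omega)}^2\big)$, and then \eqref{eq:2FAPE} converts this into the desired $H^2$ bound.

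The main obstacle is handling the lower-order convection and reaction terms: $\mathcal{K}$ is not coercive on $H^2$ in any elementary way, and naively bounding $\|\beta\cdot\nabla u + \sigma u\|_{L^2(\Omega)}$ by $\|u\|_{H^2(\Omega)}$ risks a circular estimate. The resolution is that the kernel of $\mathcal{K}$ on $H^2(\Omega)\cap H^1_0(\Omega)$ is trivial — this is exactly the well-posedness of the state equation \eqref{eq:stateEq}, which holds because $\nabla\cdot\beta = 0$, $\sigma\geq 0$ and $\varepsilon > 0$ make the bilinear form behind $\mathcal{K}$ coercive on $H^1_0(\Omega)$ together with elliptic regularity from Lemma~\ref{lemma:SFI}. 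Concretely, one shows $\mathcal{K}: H^2(\Omega)\cap H^1_0(\Omega)\to L^2(\Omega)$ is a bounded bijection, hence by the open mapping theorem bounded below, giving $\|u\|_{H^2(\Omega)}\leq C\|\mathcal{K}u\|_{L^2(\Omega)}$ with $C$ depending on the parameters and $\Omega$; multiplying through shows $\alpha\|\mathcal{K}u\|_{L^2(\Omega)}^2 \geq (\alpha/C^2)\|u\|_{H^2(\Omega)}^2$, so $\sigma_u := \alpha/C^2$ works (and the $\|u\|_{L^2(\mathcal O)}^2$ term is simply dropped). Once this bijectivity is in hand — which is where Lemma~\ref{lemma:SFI} is essential, since it is what upgrades the $H^1_0$-solvability to $H^2$-solvability — the theorem follows, and the completeness remark after \eqref{eq:positiveSchur} is automatic.
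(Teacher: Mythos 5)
Your reduction is the right one: the first two conditions in \eqref{eq:positiveSchur} are immediate, the observation term can be dropped, and everything hinges on a bound of the form $\|u\|_{H^2(\Omega)}\le C\,\|{-\varepsilon\Delta u+\beta\cdot\nabla u+\sigma u}\|_{L^2(\Omega)}$ for all $u\in H^2(\Omega)\cap H^1_0(\Omega)$, combined with Lemma~\ref{lemma:SFI}. However, the way you obtain this bound has a genuine gap. You argue that $\mathcal{K}:=-\varepsilon\Delta+\beta\cdot\nabla+\sigma$ is a bounded \emph{bijection} from $H^2(\Omega)\cap H^1_0(\Omega)$ onto $L^2(\Omega)$ and then invoke the open mapping theorem. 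The surjectivity step is unjustified: Lax--Milgram gives a weak solution in $H^1_0(\Omega)$, and Lemma~\ref{lemma:SFI} is only an \emph{a priori} inequality valid for functions already known to lie in $H^2(\Omega)\cap H^1_0(\Omega)$ --- it is not an elliptic regularity (shift) theorem and cannot upgrade the $H^1_0$ solution to $H^2$. Worse, under the hypotheses of Lemma~\ref{lemma:SFI} the domain may be a non-convex polygon (or a non-convex mapped domain), and there the $H^2$-regularity shift genuinely fails: there exist $f\in L^2(\Omega)$ whose weak solution is not in $H^2(\Omega)$, so $\mathcal{K}:H^2(\Omega)\cap H^1_0(\Omega)\to L^2(\Omega)$ is injective and bounded below but \emph{not} onto. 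Your bijectivity claim is therefore false in exactly the generality the theorem is stated in, even though the conclusion you want (bounded below) is still true.

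The paper avoids solvability questions altogether and proves the lower bound directly: writing $\dual{K'M^{-1}Ku}{u}$ as a supremum over test functions $w\in L^2(\Omega)$, it distinguishes two regimes according to whether $\|\beta\|\,\|\nabla u\|_{L^2(\Omega)}$ is smaller or larger than $\delta\varepsilon\|\Delta u\|_{L^2(\Omega)}$, chooses $w=-\varepsilon\Delta u$ in the first case and $w=u$ in the second (where the convection term vanishes by skew-symmetry, since $\nabla\cdot\beta=0$ and $u|_{\partial\Omega}=0$), and uses the Poincar\'e inequality to arrive at the explicit estimate $\|\mathcal{K}u\|^2_{L^2(\Omega)}\ge \varepsilon^4(\varepsilon+c_P\|\beta\|)^{-2}\|\Delta u\|^2_{L^2(\Omega)}$, after which Lemma~\ref{lemma:SFI} finishes the argument. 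If you want to salvage your functional-analytic route, replace the open-mapping argument by a Peetre--Tartar compactness argument: from Lemma~\ref{lemma:SFI} and the triangle inequality you get $\|u\|_{H^2(\Omega)}\le C\bigl(\|\mathcal{K}u\|_{L^2(\Omega)}+\|u\|_{H^1(\Omega)}\bigr)$, the embedding $H^2(\Omega)\hookrightarrow H^1(\Omega)$ is compact, and $\mathcal{K}$ is injective (test with $u$ and use skew-symmetry), which together yield $\|u\|_{H^2(\Omega)}\le C'\|\mathcal{K}u\|_{L^2(\Omega)}$ without any surjectivity. That repaired argument is correct but non-constructive, whereas the paper's two-case computation produces explicit parameter dependence of the constants.
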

\begin{proof}
For simplicity, we prove this lemma only for $\sigma = 0$. An extension to the case $\sigma >0$ is straight-forward.

The first two conditions are trivial since $\dual{S_q q}{q} = \alpha \|q\|^2_{L^2(\Omega)}$ and \\$\dual{S_w w}{w} =\frac{1}{\alpha} \|w\|^2_{L^2(\Omega)}$. For the third condition, let 
  \[
  \delta := \frac{\|\beta\| }{\frac{\varepsilon}{c_P} +\|\beta\| },
  \]
  where $\|\beta\| = \|\beta\|_{L^{\infty}(\Omega)}$ and $c_P$ is constant from the Poincar\'e inequality, that is, we have
  \[
  \| u\|_{L^2(\Omega)} \leq c_p\|\nabla u\|_{L^2(\Omega)}.
  \] Note that $\delta \in [0,1)$. Let $u\in H^2(\Omega)\cap H^1_0(\Omega)$ be arbitrary but fixed. We consider the two cases:
  \begin{equation}
    \label{eq:casesProof}
  \|\beta\|  \|\nabla u \|_{L^2(\Omega)} < \delta \varepsilon \|\Delta u\|_{L^2(\Omega)} \quad \text{and} \quad   \|\beta\|  \|\nabla u \|_{L^2(\Omega)} \geq \delta \varepsilon \|\Delta u\|_{L^2(\Omega)}.
  \end{equation}
  \emph{First case.} From the definition, we have
  \[
  \dual{K'M^{-1}K u}{u} = \sup_{0\neq w \in L^2(\Omega)} \frac{\inner{\beta \cdot \nabla u - \varepsilon \Delta u}{w}^2_{L^2(\Omega)}}{\|w\|^2_{L^2(\Omega)}}.
  \]
  By setting $w = -\varepsilon \Delta u$, we get using the
  Cauchy--Schwarz inequality that
  \begin{align*}
    \dual{K'M^{-1}K u}{u} &\geq \frac{(\inner{\beta \cdot \nabla u}{- \varepsilon \Delta u}_{L^2(\Omega)} + \varepsilon^2 \|\Delta u\|^2_{L^2(\Omega)})^2}{\varepsilon^2 \|\Delta u\|^2_{L^2(\Omega)}}\\
    &\geq \frac{(-\| \beta\| \| \nabla u\|_{L^2(\Omega)} \varepsilon \|\Delta u\|_{L^2(\Omega)} + \varepsilon^2 \|\Delta u\|^2_{L^2(\Omega)})^2}{\varepsilon^2 \|\Delta u\|^2_{L^2(\Omega)}}\\
    &= (-\| \beta\| \| \nabla u\|_{L^2(\Omega)}  + \varepsilon \|\Delta u\|_{L^2(\Omega)})^2.
  \end{align*}
  Using the first inequality in (\ref{eq:casesProof}), we obtain
  \begin{align*}
    \dual{K'M^{-1}K u}{u} \geq (-\| \beta\| \| \nabla u\|_{L^2(\Omega)}  + \varepsilon \|\Delta u\|_{L^2(\Omega)})^2 &\geq ((1-\delta)\varepsilon \|\Delta u\|_{L^2(\Omega)})^2\\
    &= \frac{\varepsilon^4}{(\varepsilon +c_P\|\beta\| )^2}  \|\Delta u\|^2_{L^2(\Omega)}.
  \end{align*}
  \emph{Second case.} By setting $w = u$, we get
\begin{align*}
    \dual{K'M^{-1}K u}{u} &\geq \frac{(\inner{\beta \cdot \nabla u}{ u}_{L^2(\Omega)} + \varepsilon \|\nabla u\|^2_{L^2(\Omega)})^2}{\| u\|^2_{L^2(\Omega)}} = \frac{\varepsilon^2 \|\nabla u\|^4_{L^2(\Omega)}}{\| u\|^2_{L^2(\Omega)}}
  \end{align*}
using integration by parts. Due to the homogeneous Dirichlet boundary conditions and $\nabla \cdot \beta = 0$, the term $\inner{\beta \cdot \nabla u}{ w}_{L^2(\Omega)}$ is skew symmetric and vanishes for $w = u$.
Finally, we use $\| u\|_{L^2(\Omega)} \leq c_p\|\nabla u\|_{L^2(\Omega)}$  and the second inequality in (\ref{eq:casesProof}), which gives
\begin{align*}
\dual{K'M^{-1}K u}{u} &\geq  \frac{\varepsilon^2 \|\nabla u\|^4_{L^2(\Omega)}}{\| u\|^2_{L^2(\Omega)}} \geq \frac{\varepsilon^2}{c^2_P} \|\nabla u\|^2_{L^2(\Omega)}\\ &\geq
\frac{\delta^2\varepsilon^4}{\|\beta\|^2 c^2_P} \|\Delta u\|^2_{L^2(\Omega)} =  \frac{\varepsilon^4}{(\varepsilon + c_P\|\beta\| )^2}\|\Delta u\|^2_{L^2(\Omega)}.
\end{align*}
  To summarize, in both cases we get 
  \begin{align*}
  \dual{S_u u}{u} &= \|u\|^2_{L^2(\mathcal{O})}+ \alpha \dual{K'M^{-1}K u}{u}\geq \alpha\dual{K'M^{-1}K u}{u} \\ 
    &\geq   \alpha\frac{\varepsilon^4}{(\varepsilon + c_P\|\beta\| )^2}  \|\Delta u\|^2_{L^2(\Omega)} \geq \alpha\frac{C_\Omega\varepsilon^4}{(\varepsilon + c_P\|\beta\| )^2}  \| u\|^2_{H^2(\Omega)}.
  \end{align*}
  The last inequality follows from Lemma~\ref{lemma:SFI}.
  \end{proof}

Theorem~\ref{theo:schurPos} and Theorem~\ref{theo:contMain} show that Problem~\ref{prob:KKTCont} is well-posed with respect to the norms in (\ref{eq:SchurNorm}). The boundedness and coercivity constants are bounded independent from the regularization parameter $\alpha$ as well as the problem parameters $\varepsilon$, $\beta$ and $\sigma$.
Consequently, the operator preconditioner (\ref{eq:SchurCont}) is a robust preconditioner for the optimality system, that is, the condition number is uniformly bounded independently
of the above mentioned parameters. 
So far, we have only analyzed the problem on the continuous level. In the next section, we carry this analysis over to the discrete case and provide a
computationally feasible preconditioner.

\section{Analysis of the discrete problem}
\label{sec:discAna}
We consider conforming discretizations,
that is, we choose the finite-dimensional
spaces $Q_h$ and $U_h$ such that they satisfy
\begin{equation*}
Q_h \subset L^2(\Omega) \quad \text{and}\quad U_h \subset H^2(\Omega)\cap H^1_0(\Omega).
\end{equation*}
Applying Galerkin’s principle to (\ref{eq:MatCon1}) leads to the
discrete variational problem for the functions $(q_h,w_h,u_h)\in Q_h \times Q_h \times U_h$, which we immediately write in matrix-vector
notation. We denote the vector representation of functions in these spaces by underlined versions of the
corresponding symbols, i.e., for $q_h \in Q_h$ the corresponding coefficient
vector is $\underline q_h \in \mathbb R^{\dim Q_h}$. 
With a slight abuse of notation, we use the same notation also
for the right-hand-side vectors $\underline f_h$ and $\underline u_{d,h}$,
which are obtained by testing the corresponding linear functionals with the
basis functions in $Q_h$ and $U_h$, respectively, see, e.g., 
\cite[Section~6]{mardal2011preconditioning} for further details.
Furthermore, 
operators with subscript $h$ denote matrix representations
of the operators.

Using
this notation, the discrete problem reads as follows.
\begin{problem}
\label{prob:disc}
  Find $(\underline q_h,\underline w_h,\underline u_h)\in \mathbb R^{\dim Q_h} \times \mathbb R^{\dim Q_h} \times \mathbb R^{\dim U_h}$ such that 
  \begin{equation}
  \label{eq:MatDis1} 
  \mathcal{A}_h
  \begin{pmatrix}
    \underline{q}_h\\
    \underline{w}_h\\
    \underline{u}_h
  \end{pmatrix} =
    \begin{pmatrix}
    0\\
    \underline{f}_h\\
    \underline{u}_{d,h}
    \end{pmatrix}
    \quad\mbox{with}\quad
    \mathcal{A}_h :=
 \begin{pmatrix}
    \alpha M_h  & M_h & 0 \\
    M_h & 0 & K_h \\
    0 & K^T_h & M_{\mathcal{O},h} \\
  \end{pmatrix}.
\end{equation}
\end{problem}
The exact Schur complement preconditioner (\ref{eq:SchurCont}) of the discretized system is 
\begin{equation}
\label{eq:SchurExactDisc}
  \mathcal{S}(\mathcal{A}_h) =     \begin{pmatrix}
    \alpha M_h  & 0 & 0 \\
    0 & \frac{1}{\alpha}M_h & 0 \\
    0 & 0 & M_{\mathcal{O},h} + \alpha K^T_h M^{-1}_h K_h \\
  \end{pmatrix}.
\end{equation}
Under the mild condition $U_h \subseteq Q_h$ this preconditioner is symmetric positive definite. This is a straight forward extension of \cite[Lemma 4.4]{SogZul18} by using the fact that $(\beta\cdot\nabla u_h,u_h)_{L^2(\Omega)} = 0$. In this case, Theorem~\ref{theo:schurPos} yields the following condition number bound:
\begin{equation*}
    \kappa\left((\mathcal{S}(\mathcal{A}_h))^{-1}\mathcal{A}_h\right) \leq  \frac{\cos (\pi/7)}{\sin (\pi/14)}
    \approx 4.05\, .
\end{equation*}
This preconditioner cannot be efficiently realized since the matrix $M^{-1}_h$ is dense. So, we use the following preconditioner motivated by the norms (\ref{eq:SchurNorm}) on the discretization spaces instead:
\begin{equation}
\label{eq:SchurNormDisc}
  \mathcal{S}_h :=     \begin{pmatrix}
    \alpha M_h  & 0 & 0 \\
    0 & \frac{1}{\alpha}M_h & 0 \\
    0 & 0 & M_{\mathcal{O},h} + \alpha B_h \\
  \end{pmatrix},
\end{equation}
where $B_h$ is the matrix representation of the linear operator $B: H^2(\Omega)\cap H^1_0(\Omega) \rightarrow (H^2(\Omega)\cap H^1_0(\Omega))'$,
\begin{equation}
    \dual{B u}{\tilde{u}} = (- \varepsilon \Delta u +\beta\cdot \nabla u + \sigma u,  - \varepsilon \Delta \tilde{u}+\beta\cdot \nabla \tilde{u}+ \sigma \tilde{u})_{\LLO}
\end{equation}
on $U_h$. On the continuous level, the operators $B$ and $K' M^{-1} K$ coincide. 
In general, this does not carry over to the discrete case. 
The following lemma gives sufficient conditions that guarantee
that $B_h$ and $K^T_h M^{-1}_h K_h$ coincide.
\begin{lemma}
\label{lemma:conformingDisc}
If
\begin{equation}
\label{eq:conformingDisc}
    (-\varepsilon\Delta +\beta\cdot\nabla+ \sigma)U_h \subset Q_h,
\end{equation}
then $K^T_h M^{-1}_h K_h=B_h$ and thus
$\mathcal{S}(\mathcal{A}_h)=\mathcal{S}_h$.
\end{lemma}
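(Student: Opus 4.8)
The plan is to show that under the assumption $(-\varepsilon\Delta+\beta\cdot\nabla+\sigma)U_h\subset Q_h$, the action of the discrete operator $K_h^T M_h^{-1} K_h$ on any coefficient vector coincides with that of $B_h$. I would work at the level of the bilinear forms rather than the raw matrices. Recall $K\colon U_h\to Q_h'$ is defined by $\dual{Ku}{w}=(Lu,w)_{L^2(\Omega)}$ where I abbreviate $L:=-\varepsilon\Delta+\beta\cdot\nabla+\sigma$, and $M_h$ is the mass matrix on $Q_h$. The key observation is the variational characterization already used in the excerpt for the continuous Schur complement: for $u_h\in U_h$,
\begin{equation*}
\underline{u}_h^T K_h^T M_h^{-1} K_h \underline{u}_h = \sup_{0\neq w_h\in Q_h}\frac{(Lu_h,w_h)_{L^2(\Omega)}^2}{\|w_h\|_{L^2(\Omega)}^2}.
\end{equation*}

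The heart of the argument is that the supremum is attained at the $L^2(\Omega)$-orthogonal projection of $Lu_h$ onto $Q_h$, and that under hypothesis \eqref{eq:conformingDisc} this projection is $Lu_h$ itself, since $Lu_h\in Q_h$ already. Hence the supremum equals $\|Lu_h\|_{L^2(\Omega)}^2 = \dual{Bu_h}{u_h}$, which is precisely $\underline{u}_h^T B_h\underline{u}_h$. Since both $K_h^T M_h^{-1}K_h$ and $B_h$ are symmetric matrices and their associated quadratic forms agree on all of $\mathbb{R}^{\dim U_h}$, a polarization identity gives $K_h^T M_h^{-1}K_h = B_h$. The claim $\mathcal{S}(\mathcal{A}_h)=\mathcal{S}_h$ then follows by comparing \eqref{eq:SchurExactDisc} and \eqref{eq:SchurNormDisc} block by block, the only block that differs being the $(3,3)$ entry which we have just shown to be equal.

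Alternatively, and perhaps more cleanly, one can argue directly with matrices: let $C_h$ denote the (rectangular) matrix representing the inclusion-composed-with-$L$, i.e.\ the matrix such that $K_h = M_h C_h$ when $Lu_h$ is expanded in the basis of $Q_h$; this expansion is legitimate exactly because $Lu_h\in Q_h$ by \eqref{eq:conformingDisc}. Then $K_h^T M_h^{-1} K_h = C_h^T M_h M_h^{-1} M_h C_h = C_h^T M_h C_h$, and one checks that $C_h^T M_h C_h$ is exactly the matrix representation $B_h$ of the form $\dual{Bu}{\tilde u}=(Lu,L\tilde u)_{L^2(\Omega)}$ on $U_h$, since $(Lu_h,L\tilde u_h)_{L^2(\Omega)}$ is obtained by contracting the coefficient vectors of $Lu_h$ and $L\tilde u_h$ with the $Q_h$ mass matrix.

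I expect the main obstacle to be purely notational bookkeeping: making precise what "the matrix representation of $L$ from $U_h$ into $Q_h$" means and verifying that $K_h$ genuinely factors as $M_h C_h$ rather than just as a generic rectangular matrix — this is the one place where \eqref{eq:conformingDisc} is used in an essential way, and it must be spelled out carefully, because if $Lu_h\notin Q_h$ then $K_h$ would instead encode the projection of $Lu_h$ and the factorization would fail. Everything else — symmetry, positive (semi-)definiteness, the block comparison — is routine.
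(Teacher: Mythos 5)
Your first argument is correct and is essentially the paper's own proof: the paper likewise uses the variational characterization $\dual{K_h^T M_h^{-1} K_h \underline u_h}{\underline u_h}=\sup_{0\neq w_h\in Q_h}(Lu_h,w_h)_{L^2(\Omega)}^2/\|w_h\|_{L^2(\Omega)}^2$ and observes that, thanks to \eqref{eq:conformingDisc}, the supremum is attained at $w_h=Lu_h$, giving $\|Lu_h\|_{L^2(\Omega)}^2=\dual{B_h\underline u_h}{\underline u_h}$; you merely make explicit (via symmetry and polarization) the passage from equal quadratic forms to equal matrices, which the paper leaves implicit. Your alternative factorization argument, writing $K_h=M_hC_h$ with $C_h$ the coefficient matrix of $L$ mapping the $U_h$ basis into $Q_h$ (legitimate exactly because $LU_h\subset Q_h$) and computing $K_h^TM_h^{-1}K_h=C_h^TM_hC_h=B_h$, is a genuinely different and arguably cleaner route not taken in the paper: it avoids the supremum and polarization altogether and yields the matrix identity directly, at the cost only of the notational bookkeeping you identify.
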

\begin{proof}
Let $u_h\in U_h$ be arbitrary but fixed with coefficient
vector $\underline u_h$. The definitions yield
\begin{align*}
    \dual{K^T_h M^{-1}_h K_h \underline{u}_h}{\underline{u}_h} &= \sup_{\underline w_h\in \mathbb R^{\dim Q_h}}\frac{\dual{K_h \underline{u}_h}{\underline{w}_h}^2}{\dual{M_h \underline{w}_h}{\underline{w}_h}} \\
&= \sup_{w_h\in Q_h}\frac{\inner{- \varepsilon \Delta u_h +\beta\cdot \nabla u_h + \sigma u_h }{w_h}^2_{L^2(\Omega)}}{\|w_h\|^2_{\LLO}}.
\end{align*}
Since $(-\varepsilon\Delta +\beta\cdot\nabla+ \sigma)U_h \subset Q_h$, the supremum is attained for $w_h = -\varepsilon\Delta u_h+\beta\cdot\nabla u_h+ \sigma u_h $, and we have
\begin{align*}
    \dual{K^T_h M^{-1}_h K_h \underline{u}_h}{\underline{u}_h} &=
    \sup_{w_h\in Q_h}\frac{\inner{- \varepsilon \Delta u_h +\beta\cdot \nabla u_h + \sigma u_h }{w_h}^2_{L^2(\Omega)}}{\|w_h\|^2_{\LLO}} \\&= \|- \varepsilon \Delta u_h +\beta\cdot \nabla u_h + \sigma u_h \|^2_{L^2(\Omega)}
    = \dual{B_h \underline{u}_h}{\underline{u}_h}.
\end{align*}
    Therefore,
    $K^T_h M^{-1}_h K_h=B_h$ and thus $\mathcal{S}(\mathcal{A}_h)=\mathcal{S}_h$.
\end{proof}

\section{Isogeometric analysis}
\label{sec:IGA}

Due to requirement $U_h \subset H^2(\Omega)\cap H^1_0(\Omega)$, we need a smooth discretization space. We achieve this by using IgA. We give a brief introduction to the approximation spaces in use. Let $S_{p,k,\ell}(0,1)$ be the space of B-spline functions on the unit interval $(0, 1)$ which are $k$-times continuously differentiable and piecewise polynomials of degree $p$ on a uniform grid with grid size $2^{-\ell}$.
For the space of B-splines with maximum continuity, that is, with $k = p-1$, we only write $S_{p,\ell}$.

On the parameter domain $\widehat{\Omega} := (0,1)^d$, we use a tensor-product B-spline space, denoted by
\[
   S^d_{p,k,\ell} :=\bigotimes^d_{i=1}S_{p,k,\ell}(0,1).
\]
For ease of notation, we assume to have the same spline degree $p$,
the same continuity $k$ and the same number of uniform refinement steps $\ell$, for each spatial dimension. We assume that the domain $\Omega$ can be parametrized by a geometry mapping $\mathbf{G}: \widehat{\Omega}
\rightarrow \Omega = G(\widehat{\Omega})$ with the property 
\begin{equation}
\label{eq:GeoMapCond}
    \|\nabla^r \mathbf{G}\|_{L^\infty(\widehat{\Omega})} \leq c_1 \quad \text{and} \quad \|\left(\nabla^r \mathbf{G}\right)^{-1}\|_{L^\infty(\widehat{\Omega})} \leq c_2, \quad  \text{for}\quad r=1,2,3,
\end{equation}
for some constants $c_1$ and $c_2$. The discretization space $S_{p,k,\ell}$ on the domain $\Omega$ is defined using the pull-back principle as 
\begin{equation*}
S_{p,k,\ell} (\Omega) := \left\lbrace f \circ \mathbf{G}^{-1} : f \in S^d_{p,k,\ell}\right\rbrace.
\end{equation*}
For more information on IgA, see the survey article \cite{VeiBufSanVaz14} and the references therein. We use spline spaces with maximum smoothness as the discrete state space and reduce the smoothness for $Q_h$ accordingly. More precisely, we use 
\begin{equation}
\label{eq:stateSpaceDisc}
    Q_h := S_{p,p-3,\ell}(\Omega) 
    \quad \text{and} \quad
    U_h := S_{p,\ell}(\Omega) \cap H^1_0(\Omega)
    \quad \text{with} \quad p\geq 2.
\end{equation}
The following lemma shows that, if we consider the special case of box domains (if trivially parametrized) and constant convection, the condition of Lemma~\ref{lemma:conformingDisc} holds.
\begin{theorem}
\label{theorm:conformingSplines}
If $\Omega=(0,1)^d$,
 $Q_h =  S_{p,p-3,\ell}^d$ and $U_h=S_{p,\ell}^d\cap H^1_0(\Omega)$ and if the convection $\beta$ is constant, then
 \[
    \mathcal{S}(\mathcal{A}_h)  =  \mathcal{S}_h \quad \text{and}\quad  \kappa\left(\mathcal{S}^{-1}_h\mathcal{A}_h\right) \leq  \frac{\cos (\pi/7)}{\sin (\pi/14)}
    \approx 4.05.
 \]
 \end{theorem}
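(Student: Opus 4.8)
The plan is to reduce Theorem~\ref{theorm:conformingSplines} to the two results already established, namely Lemma~\ref{lemma:conformingDisc} and Theorem~\ref{theo:schurPos} (together with Theorem~\ref{theo:contMain}). The condition-number bound $\kappa(\mathcal{S}_h^{-1}\mathcal{A}_h) \le \cos(\pi/7)/\sin(\pi/14)$ follows from Theorem~\ref{theo:schurPos} applied to the discrete operator $\mathcal{A}_h$, provided the discrete Schur complements are positive definite (which holds because they are restrictions of the positive-definite continuous operators to the conforming subspaces, using the Poincar\'e inequality and Lemma~\ref{lemma:SFI} on the box domain exactly as in Theorem~\ref{theo:contMain}) and provided $\mathcal{S}(\mathcal{A}_h) = \mathcal{S}_h$. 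So the real content is the identity $\mathcal{S}(\mathcal{A}_h) = \mathcal{S}_h$, and by Lemma~\ref{lemma:conformingDisc} this reduces entirely to verifying the inclusion
\begin{equation*}
(-\varepsilon\Delta + \beta\cdot\nabla + \sigma)\,U_h \subset Q_h
\end{equation*}
for the specific choice $U_h = S^d_{p,\ell}\cap H^1_0(\Omega)$ and $Q_h = S^d_{p,p-3,\ell}$ on $\Omega = (0,1)^d$ with constant $\beta$.

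First I would recall the standard differentiation rule for univariate B-splines: differentiating a function in $S_{p,k,\ell}(0,1)$ lowers the degree by one and the smoothness by one, i.e.\ $\partial_x S_{p,k,\ell}(0,1) \subset S_{p-1,k-1,\ell}(0,1)$, and more to the point for us, since our state space has maximal smoothness $k = p-1$, first derivatives land in $S_{p-1,p-2,\ell}$ and second derivatives in $S_{p-2,p-3,\ell}$. The key observation is that all three of these spaces embed into $S_{p,p-3,\ell}(0,1)$: raising the polynomial degree while keeping (or lowering) the knot multiplicities only enlarges a spline space on a fixed grid, and $S_{p-2,p-3,\ell} \subset S_{p-1,p-3,\ell} \subset S_{p,p-3,\ell}$, while also $S_{p-1,p-2,\ell}\subset S_{p-1,p-3,\ell}\subset S_{p,p-3,\ell}$ and trivially $S_{p,p-1,\ell}\subset S_{p,p-3,\ell}$. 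Here the hypothesis $p \ge 2$ is what makes $p-3 \ge -1$ meaningful (with the convention that multiplicity $p+1$, i.e.\ $k=-1$, is the discontinuous case) and, more importantly, what guarantees the continuity index $p-3$ is at least one less than the degree of each of the three spaces appearing, so these are genuine spline spaces. I would state this chain of inclusions as the core lemma of the proof.

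Second, I would lift this to the tensor-product setting on $\widehat\Omega = (0,1)^d$: if $u_h \in S^d_{p,\ell}$, then $\Delta u_h = \sum_{i=1}^d \partial_{x_i}^2 u_h$, and each summand is a tensor product of $d-1$ univariate factors in $S_{p,p-1,\ell}$ with one factor in $S_{p-2,p-3,\ell}$, hence lies in $S^d_{p,p-3,\ell} = Q_h$ by the univariate inclusions above; similarly each $\partial_{x_i}u_h$ lies in $Q_h$, and since $\beta$ is a constant vector, $\beta\cdot\nabla u_h = \sum_i \beta_i \partial_{x_i} u_h \in Q_h$; finally $\sigma u_h \in S^d_{p,p-1,\ell} \subset Q_h$. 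Summing, $(-\varepsilon\Delta + \beta\cdot\nabla + \sigma)u_h \in Q_h$. (Since $\Omega=(0,1)^d$ is trivially parametrized, $\mathbf{G} = \mathrm{id}$ and there is no pull-back to worry about — this is exactly why the theorem restricts to box domains; a non-trivial geometry map would destroy the polynomial/knot structure and the constancy of $\beta$ in parameter coordinates.) This establishes \eqref{eq:conformingDisc}, so Lemma~\ref{lemma:conformingDisc} gives $K_h^T M_h^{-1} K_h = B_h$ and hence $\mathcal{S}(\mathcal{A}_h) = \mathcal{S}_h$.

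The last step is to invoke Theorem~\ref{theo:schurPos} for $\mathcal{A}_h$. I would note that the discrete Schur complements $S_{q,h} = \alpha M_h$ and $S_{w,h} = \frac1\alpha M_h$ are trivially positive definite, and $S_{u,h} = M_{\mathcal{O},h} + \alpha K_h^T M_h^{-1} K_h$ inherits positive definiteness with respect to the discrete $H^2$-norm from the continuous estimate proved in Theorem~\ref{theo:contMain} — the case split on $\|\beta\|\,\|\nabla u_h\|$ versus $\delta\varepsilon\|\Delta u_h\|$, the choices $w = -\varepsilon\Delta u_h$ and $w = u_h$ (both in $Q_h$ by the inclusion just proved, so that the suprema over $Q_h$ dominate these choices), the skew-symmetry of $(\beta\cdot\nabla u_h, u_h)$ under homogeneous Dirichlet data and $\nabla\cdot\beta = 0$, and Lemma~\ref{lemma:SFI} for the box domain all go through verbatim at the discrete level because $U_h \subset H^2(\Omega)\cap H^1_0(\Omega)$ is conforming. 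Then Theorem~\ref{theo:schurPos} yields $\kappa(\mathcal{S}(\mathcal{A}_h)^{-1}\mathcal{A}_h) \le \cos(\pi/7)/\sin(\pi/14)$, and since $\mathcal{S}(\mathcal{A}_h) = \mathcal{S}_h$ this is the claimed bound. The only genuinely new obstacle is the B-spline bookkeeping in the core inclusion lemma — getting the degree/smoothness arithmetic right and confirming that $p \ge 2$ is exactly the hypothesis needed — everything else is a conforming restriction of arguments already in hand.
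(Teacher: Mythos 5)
Your proposal is correct and follows essentially the same route as the paper: verify the inclusion $(-\varepsilon\Delta+\beta\cdot\nabla+\sigma)U_h\subset Q_h$ via the univariate B-spline differentiation rule and the nestedness of spline spaces on a fixed grid, lift it through the tensor-product structure (using constancy of $\beta$), and then conclude with Lemma~\ref{lemma:conformingDisc} and Theorem~\ref{theo:schurPos}. The only cosmetic differences are that the paper writes the argument for $d=2$ with general continuity $k$ before specializing to $k=p-1$, and it disposes of the positive definiteness of the discrete Schur complements in the remark preceding Lemma~\ref{lemma:conformingDisc} (using $U_h\subseteq Q_h$), whereas you spell that step out inside the proof.
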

 \begin{proof}
 For sake of simplicity, we restrict the proof to the two-dimensional case. Clearly,
 \[
  f'  \in  S_{p-1,k-1,\ell}(0,1)\quad \foralls f\in S_{p,k,\ell}(0,1)
  \]
  together with
  \[S_{p-1,k-1,\ell}(0,1)\subset S_{p,k-1,\ell}(0,1)\quad \text{and}\quad S_{p,k,\ell}(0,1)\subset S_{p,k-2,\ell}(0,1),
  \]

    see~\cite{VeiBufSanVaz14}.
  Let $u\in S^2_{p,k,\ell}= S_{p,k,\ell}(0,1)\otimes S_{p,k,\ell}(0,1)$. Then,
  \begin{align*}
    \frac{\partial^2 u}{\partial x_1^2} &\in S_{p-2,k-2,\ell}(0,1)\otimes S_{p,k,\ell}(0,1)\subset S^2_{p,k-2,\ell},
    \\
    \frac{\partial^2 u}{\partial x_2^2} &\in S_{p,k,\ell}(0,1)\otimes S_{p-2,k-2,\ell}(0,1)\subset S^2_{p,k-2,\ell},
  \end{align*}
and, by combining these results, also
\[\Delta u = \frac{\partial^2 u}{\partial x_1^2} + \frac{\partial^2 u}{\partial x_2^2} \in S^2_{p,k-2,\ell}.\]
Since $\beta= (\beta_1,\beta_2)$ is constant we also have
\[
\beta\cdot\nabla u = \beta_1\frac{\partial u}{\partial x_1} + \beta_2\frac{\partial u}{\partial x_2} \in S^2_{p,k-2,\ell}.
\]
To summarize, for every $u\in S^2_{p,k,\ell}$, we have
\[
-\varepsilon \Delta u + \beta \cdot \nabla u + \sigma u \in S^2_{p,k-2,\ell},
\]
hence Condition~(\ref{eq:conformingDisc}) in Lemma~\ref{lemma:conformingDisc} holds and we have $\mathcal{S}(\mathcal{A}_h)  =  \mathcal{S}_h$. The condition number bound follows from Theorem~\ref{theo:schurPos}.
\end{proof}

If the domain is not a box domain or if the convection is not constant, then Theorem~\ref{theorm:conformingSplines} cannot be applied.
Nevertheless, the numerical results presented in Section~\ref{sec:num} indicate that the spaces proposed in (\ref{eq:stateSpaceDisc}) work well even for more complex domains and variable convection. 

\section{Error estimates}
\label{sec:errorEst}
In this section, we derive discretization error estimates for Problem~\ref{prob:disc}. Let $A(\mathbf{x},\tilde{\mathbf{x}})$ denote the bilinear form in Problem \ref{prob:KKTCont}, where $\mathbf{x}, \tilde{\mathbf{x}} \in \mathbf{X}$ denotes the triplet $(q,w,u)$ $\in L^2(\Omega) \times L^2(\Omega) \times H^2(\Omega)\cap H^1_0(\Omega)$, with the norm
\[
\| \mathbf{x} \|^2_{\mathcal{S}} = \|q \|^2_{S_q} + \|w \|^2_{S_w} + \|u \|^2_{S_u}. 
\]The discrete triplet $(q_h,w_h,u_h) \in Q_h \times Q_h \times U_h$ is denoted by $\mathbf{x}_h \in \mathbf{X}_h$. Galerkin orthogonality reads as follows:
\begin{equation}
\label{eq:GalerkinOth}
A(\mathbf{x}-\mathbf{x}_h,\mathbf{y}_h) = 0 \quad \foralls \mathbf{y}_h \in \mathbf{X}_h.
\end{equation}
Since Problem~\ref{prob:KKTCont} is well-posed (Theorem~\ref{theo:schurPos}), we have boundedness
\begin{equation}
\label{eq:boundedness}
A(\mathbf{x},\mathbf{y}) \leq \overline{c}\|\mathbf{x}\|_{\mathcal{S}} \|\mathbf{y}\|_{\mathcal{S}} \quad \foralls \mathbf{x},\mathbf{y}\in \mathbf{X}
\end{equation}
and
inf-sup stability
\begin{equation}
\label{eq:coercivity}
\sup_{0 \neq \mathbf{y} \in \mathbf{X}} \frac{A(\mathbf{x},\mathbf{y})}{\|\mathbf{y}\|_{\mathcal{S}}} \geq \underline{c}\|\mathbf{x}\|_{\mathcal{S}} \quad \foralls \mathbf{x}\in \mathbf{X},
\end{equation}
where $\overline c/\underline c =  \kappa(\mathcal S^{-1}\mathcal A)$.
The boundedness also holds for a conforming discretization space $\mathbf{X}_h \subset \mathbf{X}$ with the same constant $\overline{c}$. If the condition (\ref{eq:conformingDisc}) in Lemma~\ref{lemma:conformingDisc} holds, then also the inf-sup holds with the same constant $\underline{c}$. Using this and the ideas of \cite{babuvska1971error}, we derive the
following discretization error estimate.
\begin{lemma}
\label{lemma:Cea}
If $\mathbf{x}_h \in \mathbf{X}_h$ is the solution to (\ref{eq:MatDis1}) for a discretization space satisfying condition (\ref{eq:conformingDisc}) and if $\mathbf{x}\in \mathbf{X}$ is the solution of (\ref{eq:MatCon1}), then we have the estimate
\begin{equation}
\label{eq:Cea}
    \|\mathbf{x}- \mathbf{x}_h\|_{\mathcal{S}} \leq (1+\kappa(\mathcal S^{-1}\mathcal A)) \inf_{\mathbf{y}_h\in \mathbf{X}_h}\|\mathbf{x}- \mathbf{y}_h\|_{\mathcal{S}}.
\end{equation}
\end{lemma}
\begin{proof}
Let $\mathbf{y}_h$ and $\mathbf{z}_h$ be arbitrary functions in $\mathbf{X}_h$. Due to Galerkin orthogonality (\ref{eq:GalerkinOth}), we have
\begin{align*}
    A(\mathbf{x}_h-\mathbf{y}_h,\mathbf{z}_h)  =  A(\mathbf{x}_h-\mathbf{x}+\mathbf{x}-\mathbf{y}_h,\mathbf{z}_h) = A(\mathbf{x}-\mathbf{y}_h,\mathbf{z}_h).
\end{align*}
Combining this with the boundedness condition (\ref{eq:boundedness}) gives
\begin{equation*}
A(\mathbf{x}_h-\mathbf{y}_h,\mathbf{z}_h)  \leq \overline{c} \|\mathbf{x}-\mathbf{y}_h\|_{\mathcal{S}} \|\mathbf{z}_h\|_{\mathcal{S}} \quad \foralls \mathbf{y}_h,\mathbf{z}_h\in \mathbf{X}_h.
\end{equation*}
Using the discrete inf-sup gives
\begin{equation*}
\underline{c}\|\mathbf{x}_h-\mathbf{y}_h\|_{\mathcal{S}}
\leq \sup_{0 \neq \mathbf{z}_h \in \mathbf{X}_h} \frac{A(\mathbf{x}_h-\mathbf{y}_h,\mathbf{z}_h)}{\|\mathbf{z}_h\|_{\mathcal{S}}} \leq \overline{c} \|\mathbf{x}-\mathbf{y}_h\|_{\mathcal{S}} \quad \foralls \mathbf{y}_h\in \mathbf{X}_h.
\end{equation*}
Finally, we use the triangle inequality and $\overline c/\underline c =\kappa(\mathcal S^{-1}\mathcal A)$ to obtain the desired result 
\[
\|\mathbf{x}-\mathbf{x}_h\|_{\mathcal{S}} \leq \|\mathbf{x}-\mathbf{y}_h\|_{\mathcal{S}} +\|\mathbf{x}_h-\mathbf{y}_h\|_{\mathcal{S}} \leq (1+\kappa(\mathcal S^{-1}\mathcal A)) \|\mathbf{x}-\mathbf{y}_h\|_{\mathcal{S}} \quad \foralls \mathbf{y}_h\in \mathbf{X}_h.
\]
\end{proof}
Let $\Omega = \widehat{\Omega} := (0,1)^d$
and let the convection $\beta$ be constant. Since we assume
a trivial parametrization, we have
the discretization spaces $Q_h = S^d_{p,p-3,\ell}$ and $U_h = S^d_{p,\ell}\cap H^1_0(\widehat\Omega)$. 

To derive the error estimates, we assume that the solution of (\ref{eq:MatCon1}) satisfies the regularity assumption
$(q,w,u)\in H^1(\widehat{\Omega})\times H^1(\widehat{\Omega})\times H^3(\widehat{\Omega}) \cap H^1_0(\widehat{\Omega})$.

Now, we estimate the approximation error term in (\ref{eq:Cea}) from above. First, we observe that
\begin{align*}
\inf_{\mathbf{y}_h\in \mathbf{X}_h}\|\mathbf{x}- \mathbf{y}_h\|_{\mathcal{S}}
\quad &\le \\ \inf_{q_h\in {S}^d_{p,p-3,\ell}} \alpha\|q-q_h\|^2_{L^2({\widehat{\Omega}})}
&+ \inf_{w_h\in {S}^d_{p,p-3,\ell}} \frac{1}{\alpha}\|w-w_h\|^2_{L^2({\widehat{\Omega}})} +
\inf_{u_h\in {S}^d_{p,\ell}\cap H^1_0(\widehat\Omega)} \|u-u_h\|^2_{S_u}.
\end{align*}
The two first terms can be bounded by using the following approximation error estimate
\begin{equation}
\label{eq:AppNonSmooth}
\inf_{q_h\in S^d_{p,p-3,\ell}} \|q-q_h\|_{L^2(\widehat\Omega)} \leq \frac{h}{4\sqrt{3}} \|\nabla q\|_{L^2(\widehat\Omega)},
\end{equation}
see \cite[Corollary 1]{sande2020explicit}. The estimate for the last term 
\[
\inf_{u_h\in S^d_{p,\ell}\cap H^1_0(\widehat\Omega)} \|u-u_h\|^2_{S_u}
\]
is more involved. Before handling this term, we need a convenient
notation and some auxiliary approximation error estimates.
\begin{notation}
In what follows, $c$ is a generic positive constant independent of $\alpha$, $\sigma$, $\beta$, $\varepsilon$, $h$ and $p$, 
but may depend on the spatial dimension $d$ and the observation domain $\mathcal{O}$. 
\end{notation}
 In Appendix~\ref{sec:app2}, we extend some of the results of \cite{sande2020explicit,sogn2018schur,sogn2019robust}. The main result is summarized in the following theorem.
 
\begin{theorem}
\label{theo:App210}
Let $\mathbf{\Pi}_p:H^3(\widehat{\Omega})\cap H^1_0(\widehat{\Omega})\rightarrow S^d_{p,\ell} \cap H^1_0(\widehat{\Omega})$ be the $H^2$-orthogonal projector, where $p\geq 3$ and $\ell \geq 1$. Then, 
\begin{align}
    \label{eq:AppH2H3}
    \|\nabla^2 (I-\mathbf{\Pi}_p)u \|_{L^2(\widehat{\Omega})} &\leq c h^{\:\:}  \|\nabla^3 u \|_{L^2(\widehat{\Omega})},\\
    \label{eq:AppH1H3}
    \|\nabla (I-\mathbf{\Pi}_p)u \|_ {L^2(\widehat{\Omega})} &\leq c h^2  \|\nabla^3 u \|_{L^2(\widehat{\Omega})},\\
    \label{eq:AppL2H3}  
    \|(I-\mathbf{\Pi}_p)u\|_{L^2(\widehat{\Omega})} &\leq c h^3  \|\nabla^3 u \|_{L^2(\widehat{\Omega})} \quad \foralls u \in H^3(\widehat{\Omega})\cap H^1_0(\widehat{\Omega}).
\end{align}
\end{theorem}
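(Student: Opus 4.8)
The plan is to obtain \eqref{eq:AppH2H3} by a Céa‑type argument (the $H^2$‑projector is a best approximation in the $H^2$‑seminorm, so one only needs a single good spline approximant), and then to deduce the lower‑order estimates \eqref{eq:AppH1H3} and \eqref{eq:AppL2H3} by an Aubin--Nitsche duality argument applied to fourth‑order dual problems. Throughout I write $a(v,w):=(\nabla^2 v,\nabla^2 w)_{L^2(\widehat{\Omega})}$; by Lemma~\ref{lemma:SFI} this is an inner product on $H^2(\widehat{\Omega})\cap H^1_0(\widehat{\Omega})$ equivalent to the full $H^2$‑norm, and $\mathbf{\Pi}_p$ is the orthogonal projector for it. Set $e:=(I-\mathbf{\Pi}_p)u$, so that $a(e,v_h)=0$ for all $v_h\in S^d_{p,\ell}\cap H^1_0(\widehat{\Omega})$.

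For \eqref{eq:AppH2H3}, orthogonality yields $\|\nabla^2 e\|_{L^2(\widehat{\Omega})}=\inf_{v_h}\|\nabla^2(u-v_h)\|_{L^2(\widehat{\Omega})}$, so it suffices to construct one spline $v_h\in S^d_{p,\ell}\cap H^1_0(\widehat{\Omega})$ with a good bound. I would take $v_h$ to be a tensor‑product B‑spline quasi‑interpolant that preserves the homogeneous Dirichlet condition (built coordinatewise from a boundary‑preserving univariate quasi‑interpolant), and reduce $\|\nabla^2(u-v_h)\|$, i.e.\ the bounds on $\partial_{11}$, $\partial_{12}$, $\partial_{22}$, to univariate spline approximation estimates of the form $\|\partial^m(f-\Pi f)\|_{L^2(0,1)}\le c\,h^{s-m}\|\partial^s f\|_{L^2(0,1)}$ for $0\le m\le 2$, $2\le s\le p+1$; these are essentially the estimates of \cite{sande2020explicit}, extended (as in \cite{sogn2018schur,sogn2019robust}) to cover the $H^2$‑seminorm. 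Combining them over the tensor product gives $\|\nabla^2(u-v_h)\|\le c\,h\,\|\nabla^3 u\|$ for $u\in H^3\cap H^1_0$ and $p\ge 3$, and also the sharper bound $\|\nabla^2(u-v_h)\|\le c\,h^2\,\|\nabla^4 u\|$ for $u\in H^4\cap H^1_0$ — the latter is needed below and is where $p\ge 3$ is first used.

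For \eqref{eq:AppH1H3}, fix $g\in L^2(\widehat{\Omega})^d$ and let $\psi\in H^2\cap H^1_0$ solve the dual problem $a(\psi,v)=(\nabla v,g)_{L^2(\widehat{\Omega})}$ for all $v\in H^2\cap H^1_0$; its load is the functional $v\mapsto(\nabla v,g)$, which lies in $(H^1_0(\widehat{\Omega}))'$, so fourth‑order elliptic regularity on the box gives $\psi\in H^3(\widehat{\Omega})$ with $\|\nabla^3\psi\|\le c\|g\|_{L^2(\widehat{\Omega})}$. Then, using symmetry of $a$, $a(e,\mathbf{\Pi}_p\psi)=0$, Cauchy--Schwarz, \eqref{eq:AppH2H3} applied to $\psi$, and \eqref{eq:AppH2H3} applied to $u$,
\begin{align*}
(\nabla e,g)_{L^2(\widehat{\Omega})} &= a(\psi,e)= a(e,\psi-\mathbf{\Pi}_p\psi)\le \|\nabla^2 e\|_{L^2(\widehat{\Omega})}\,\|\nabla^2(\psi-\mathbf{\Pi}_p\psi)\|_{L^2(\widehat{\Omega})}\\
&\le c\,h\,\|\nabla^2 e\|_{L^2(\widehat{\Omega})}\,\|\nabla^3\psi\|_{L^2(\widehat{\Omega})}\le c\,h^2\,\|\nabla^3 u\|_{L^2(\widehat{\Omega})}\,\|g\|_{L^2(\widehat{\Omega})},
\end{align*}
and taking the supremum over $g$ gives \eqref{eq:AppH1H3}. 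The estimate \eqref{eq:AppL2H3} is the same with dual load $(v,g)_{L^2(\widehat{\Omega})}$, $g\in L^2(\widehat{\Omega})$, so $\psi\in H^4(\widehat{\Omega})$ with $\|\nabla^4\psi\|\le c\|g\|_{L^2(\widehat{\Omega})}$; replacing the bound on $\|\nabla^2(\psi-\mathbf{\Pi}_p\psi)\|$ by $c\,h^2\,\|\nabla^4\psi\|$ (valid since $p\ge 3$) yields one extra power of $h$, hence $\|e\|_{L^2(\widehat{\Omega})}\le c\,h^3\,\|\nabla^3 u\|_{L^2(\widehat{\Omega})}$.

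I expect two steps to require genuine work. First, the tensor‑product bookkeeping behind \eqref{eq:AppH2H3}: one must combine the univariate estimates so that the homogeneous Dirichlet condition is retained while all mixed second derivatives are controlled with the correct $h$‑power and with constants independent of $\alpha,\varepsilon,\beta,\sigma,h,p$ — this is precisely the technical content that the extensions of \cite{sande2020explicit,sogn2018schur,sogn2019robust} in Appendix~\ref{sec:app2} must supply. Second, and this is the main conceptual obstacle, the $H^3$‑ and $H^4$‑regularity of the fourth‑order dual problems on the cube $(0,1)^d$: corner effects could in principle prevent full elliptic regularity, so one has to exploit that the variational space is only $H^2\cap H^1_0$ (hence the natural boundary conditions are not the clamped‑plate ones) and, if necessary, argue the regularity directly on the box by a Fourier/reflection argument.
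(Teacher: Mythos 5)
Your handling of \eqref{eq:AppH2H3} is essentially sound and close in spirit to the paper: since $\mathbf{\Pi}_p$ is orthogonal for the $H^2$-seminorm (which on the box coincides with the $L^2$-norm of the Laplacian), it suffices to exhibit one good spline approximant; the paper realizes this not with a boundary-preserving quasi-interpolant but with directional projectors and an induction over the dimension, first proving an $H^2$--$H^4$ bound and then obtaining \eqref{eq:AppH2H3} by operator interpolation (Theorem~\ref{theo:H2H3dD}).

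The genuine gap is in your derivation of \eqref{eq:AppH1H3} and \eqref{eq:AppL2H3} by Aubin--Nitsche duality. That argument stands or falls with the elliptic shift theorems for the fourth-order dual problems on $\widehat\Omega=(0,1)^d$ with the variational (Navier-type) boundary conditions induced by $H^2(\widehat\Omega)\cap H^1_0(\widehat\Omega)$: you need $\|\nabla^3\psi\|_{L^2(\widehat\Omega)}\le c\|g\|_{L^2(\widehat\Omega)}$ for the load $v\mapsto(\nabla v,g)$ and $\|\nabla^4\psi\|_{L^2(\widehat\Omega)}\le c\|g\|_{L^2(\widehat\Omega)}$ for an $L^2$ load, and you prove neither, only gesturing at a Fourier/reflection argument. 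For $d=2$ the $H^3$ shift can be extracted from Blum--Rannacher-type results for the biharmonic operator on corner domains, but Theorem~\ref{theo:App210} is needed for all $d$ (it feeds Theorem~\ref{theo:errorEst}, stated for $d\in\mathbb N$), and for $d\ge 3$ no such shift theorem is available off the shelf; moreover the $H^4$ shift even on the square is delicate: decoupling the Navier problem into two Dirichlet--Poisson problems requires an $H^2\to H^4$ shift for the second one, which fails on polygons without corner compatibility conditions, so this step needs a genuine proof, not an appeal to ``full elliptic regularity''. This is precisely why the paper avoids duality altogether: it obtains the $L^2$--$H^2$ estimate directly from \cite[Theorem 9.3]{sogn2018schur} (Theorem~\ref{theo:L2H2dD}), gets the $H^1$--$H^2$ and $H^2$--$H^3$ estimates by operator interpolation (Theorems~\ref{theo:H1H2dD} and~\ref{theo:H2H3dD}), and then deduces \eqref{eq:AppH1H3} and \eqref{eq:AppL2H3} purely algebraically from idempotency, e.g. $\|\nabla (I-\mathbf{\Pi}_p)u\|=\|\nabla (I-\mathbf{\Pi}_p)(I-\mathbf{\Pi}_p)u\|\le c\,h\,\|\nabla^2(I-\mathbf{\Pi}_p)u\|\le c\,h^2\|\nabla^3 u\|$, with no regularity of a dual problem required. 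Unless you supply the missing shift theorems on the cube for every $d$ (including the $H^4$ case), your route does not close; if you do supply them, the remaining duality computation is correct.
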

\begin{remark}
In the statement of
Theorem~\ref{theo:App210}, the parameter domain $\widehat\Omega$ is considered. This result can be extended to physical domains if the corresponding geometry function $\mathbf{G}$ is sufficiently smooth, cf. \cite{sogn2019robust}. 
\end{remark}
With Theorem~\ref{theo:App210}, we can derive an error estimate for our problem. 
\begin{theorem}
\label{theo:errorEst}
Let $\Omega := \widehat\Omega := (0,1)^d$ with $d\in \mathbb{N}$, let $\beta\in \mathbb{R}^d$ be constant and let $(q_h,w_h,u_h) \in S^d_{p,p-3,\ell} \times S^d_{p,p-3,\ell} \times S^d_{p,\ell}\cap H^1_0(\widehat{\Omega})$ with $p\geq 3$ and $\ell \geq 1$, be the solution to (\ref{eq:MatDis1}). If $(q,w,u) \in H^1(\widehat\Omega) \times H^1(\widehat\Omega) \times H^3(\widehat\Omega) \cap H^1_0(\widehat\Omega)$ is the solution of (\ref{eq:MatCon1}), then we have the following estimates:
\begin{align*}
    &\|q - q_h\|_{S_q} + \|w - w_h\|_{S_w} +\|u - u_h\|_{S_u} \leq \\
    &c h \!\left(\! \sqrt{\alpha} \|\nabla q\|_{L^2(\widehat\Omega)} +\frac{1}{\sqrt{\alpha}} \|\nabla w\|_{L^2(\widehat\Omega)} + \sqrt{\alpha}\, \max\left\lbrace \varepsilon, \|\beta\| h, \left(\sigma \!+\! \frac{1}{\sqrt{\alpha}}\right)\! h^2 \!\right\rbrace  \|\nabla^3 u\|_{L^2(\widehat\Omega)}\right)
\end{align*}
\end{theorem}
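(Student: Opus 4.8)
## Proof Proposal

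The plan is to start from the Céa-type estimate in Lemma~\ref{lemma:Cea}, which applies because Theorem~\ref{theorm:conformingSplines} guarantees that condition (\ref{eq:conformingDisc}) holds for box domains with constant convection; hence $\|\mathbf{x}-\mathbf{x}_h\|_{\mathcal{S}} \le (1+\kappa(\mathcal{S}^{-1}\mathcal{A})) \inf_{\mathbf{y}_h}\|\mathbf{x}-\mathbf{y}_h\|_{\mathcal{S}}$, and the constant $1+\kappa$ is absorbed into the generic $c$. Then I would bound the infimum by choosing $\mathbf{y}_h = (q_h,w_h,u_h)$ componentwise: for the $q$- and $w$-components use the quasi-optimal reduced-smoothness spline estimate (\ref{eq:AppNonSmooth}) from \cite{sande2020explicit}, which immediately gives the $\sqrt{\alpha}\,c h\|\nabla q\|$ and $\frac{1}{\sqrt{\alpha}}c h\|\nabla w\|$ contributions (recall $\|q\|_{S_q}^2 = \alpha\|q\|_{L^2}^2$ and $\|w\|_{S_w}^2 = \frac1\alpha\|w\|_{L^2}^2$). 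For the $u$-component I would take $u_h = \mathbf{\Pi}_p u$, the $H^2$-orthogonal projector from Theorem~\ref{theo:App210}.

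The core of the argument is estimating $\|u - \mathbf{\Pi}_p u\|_{S_u}^2 = \|u-\mathbf{\Pi}_p u\|_{L^2(\mathcal{O})}^2 + \alpha\| -\varepsilon\Delta(u-\mathbf{\Pi}_p u) + \beta\cdot\nabla(u-\mathbf{\Pi}_p u) + \sigma(u-\mathbf{\Pi}_p u)\|_{L^2(\widehat\Omega)}^2$. Write $e := u - \mathbf{\Pi}_p u$. The first term is bounded by $\|e\|_{L^2(\widehat\Omega)}^2 \le c\,h^6\|\nabla^3 u\|^2$ via (\ref{eq:AppL2H3}); this is lower order and gets swallowed. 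For the second term I would apply the triangle inequality in $L^2$ and then the three estimates of Theorem~\ref{theo:App210} term by term: $\varepsilon\|\Delta e\|_{L^2} \le \varepsilon\sqrt{d}\,\|\nabla^2 e\|_{L^2} \le c\,\varepsilon h\|\nabla^3 u\|$ from (\ref{eq:AppH2H3}); $\|\beta\cdot\nabla e\|_{L^2} \le \|\beta\|\,\|\nabla e\|_{L^2} \le c\,\|\beta\| h^2 \|\nabla^3 u\|$ from (\ref{eq:AppH1H3}); and $\sigma\|e\|_{L^2} \le c\,\sigma h^3\|\nabla^3 u\|$ from (\ref{eq:AppL2H3}). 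Collecting these, $\sqrt{\alpha}\,\| -\varepsilon\Delta e + \beta\cdot\nabla e + \sigma e\|_{L^2} \le c\sqrt{\alpha}\,(\varepsilon h + \|\beta\| h^2 + \sigma h^3)\|\nabla^3 u\|$, and the $L^2(\mathcal{O})$ term contributes an extra $c\sqrt{\alpha}\cdot\frac{1}{\sqrt\alpha}\cdot$ — more precisely one must be careful: $\|e\|_{L^2(\mathcal{O})} \le \|e\|_{L^2(\widehat\Omega)} \le c h^3\|\nabla^3 u\|$, contributing a term of size $c\,h^3\|\nabla^3 u\|$ without an $\alpha$ weight, which matches the $\sqrt\alpha\cdot\frac{1}{\sqrt\alpha}h^2 = h^2$ factor inside the max only after pulling out one factor of $h$; tracking this bookkeeping so that a single factor of $h$ comes out front and the remainder organizes into $\max\{\varepsilon, \|\beta\| h, (\sigma + \tfrac{1}{\sqrt\alpha})h^2\}$ is the fiddly part.

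The main obstacle I expect is precisely that final repackaging: showing $\varepsilon h + \|\beta\| h^2 + \sigma h^3 + \tfrac{1}{\sqrt\alpha}h^3 \le c\,h\,\max\{\varepsilon, \|\beta\| h, (\sigma + \tfrac{1}{\sqrt\alpha})h^2\}$, which is just the elementary fact that a sum of $n$ nonnegative terms is at most $n$ times their maximum, but requires correctly attributing the $\frac{1}{\sqrt\alpha}h^3$ term (which comes from the $\|e\|_{L^2(\mathcal{O})}^2$ contribution after dividing the $\sqrt{S_u}$-norm appropriately) to the $\sigma$-slot rather than leaving it stranded. One should also double-check that no $\alpha$-dependence sneaks into the generic constant, which is guaranteed since $\kappa(\mathcal{S}^{-1}\mathcal{A}) \le \cos(\pi/7)/\sin(\pi/14)$ is $\alpha$-independent by Theorem~\ref{theo:schurPos}, and all spline estimates are $\alpha$-free. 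Finally, I would note that $p \ge 3$ and $\ell \ge 1$ are exactly the hypotheses needed for Theorem~\ref{theo:App210}, so no additional restrictions arise. Everything else — summing the three squared components under one square root, and using $\sqrt{a+b} \le \sqrt a + \sqrt b$ — is routine.
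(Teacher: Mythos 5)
Your proposal is correct and follows essentially the same route as the paper: the Céa-type bound of Lemma~\ref{lemma:Cea}, the estimate~(\ref{eq:AppNonSmooth}) for the $q$- and $w$-components, and the choice $u_h=\mathbf{\Pi}_p u$ with the three estimates of Theorem~\ref{theo:App210} applied term by term, including the correct attribution of the $L^2(\mathcal{O})$ contribution to the $(\sigma+1/\sqrt{\alpha})h^2$ slot inside the maximum. No gaps worth noting.
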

\begin{proof}
Let $\tilde{u}_h := \mathbf{\Pi}_p u$. Then, we have for all $u \in H^3(\widehat{\Omega})\cap H^1_0(\widehat{\Omega})$
\begin{align*}
    \|u-\tilde{u}_h\|^2_{S_u} &= \|u - \tilde{u}_h \|^2_{L^2(\mathcal{O})} +\alpha \|(-\varepsilon \Delta + \beta \cdot \nabla + \sigma ) (u - \tilde{u}_h) \|^2_{L^2(\widehat{\Omega})}.
    \end{align*}
For the first term, we simply extend the norm from $\mathcal{O}$ to $\widehat{\Omega}$ and obtain
\[
\|u - \tilde{u}_h \|^2_{L^2(\mathcal{O})} \leq \|u - \tilde{u}_h \|^2_{L^2(\widehat{\Omega})} \leq c\, h^6 \|\nabla^3 u\|^2_{L^2(\widehat{\Omega})}. 
\]
For the second term, we use the triangle inequality and Theorem~\ref{theo:App210}, and we get 
    \begin{align*}
    &\alpha \|(-\varepsilon \Delta + \beta \cdot \nabla + \sigma ) (u - \tilde{u}_h) \|^2_{L^2(\widehat{\Omega})}\\
&\quad\leq3 \alpha \left(\varepsilon^2 \|\Delta (u-\tilde{u}_h) \|^2_{L^2(\widehat{\Omega})} + \|\beta\|^2\|\nabla(u-\tilde{u}_h) \|^2_{L^2(\widehat{\Omega})}  + \sigma^2 \|u-\tilde{u}_h \|^2_{L^2(\widehat{\Omega})} \right)\\
&\quad\leq c\, \alpha \left( \varepsilon^2 h^2 + \|\beta\|^2 h^4 + \sigma^2 h^6 \right)\|\nabla^3 u\|^2_{L^2(\widehat{\Omega})}.
\end{align*}
Combining these results gives
\begin{equation}
\label{eq:AppU}
\inf_{\tilde{u}_h\in S_{p,\ell}^d\cap H^1_0(\widehat\Omega)} \|u-\tilde{u}_h\|_{S_u}\leq  c\, \sqrt{\alpha}\, h\,\max\left\lbrace \varepsilon, \|\beta\| h, (\sigma+ 1/\sqrt{\alpha}) h^2 \right\rbrace  \|\nabla^3 u\|_{L^2(\widehat{\Omega})}.
\end{equation}
The theorem follows from combining Lemma~\ref{lemma:Cea} with Equation~(\ref{eq:AppNonSmooth}) and Equation~(\ref{eq:AppU}).
\end{proof}
\begin{remark}
Theorem~\ref{theo:errorEst} is somewhat restrictive since it requires the domain to be a box domain and the convection to be constant. These requirements are needed for the proof of the discrete inf-sup stability. 
The numerical results presented in Section~\ref{sec:num} indicate that the restrictions are not needed. 
\end{remark}

\section{Numerical experiments: Accuracy of the solution}
\label{sec:NumCanonical}
In this section we compare the solution of the forward
problem to the (state) solution of the optimal control problem
to investigate the fact that 
the optimal control problem naturally introduces
a non-standard Petrov-Galerkin method for the state equation. 
We consider a well-known one-dimensional problem~\cite{brooks1982streamline}:
\[
-\partial_x u(x) - \varepsilon \,\partial_{xx} u(x) = 0 \quad \mbox{in}\quad (0,1),\qquad 
u(0) = 0,\qquad
u(1) = 1,
\]
whose exact solution is
\[
u(x) = \frac{e^{-x/\varepsilon} - 1}{e^{-1/\varepsilon} - 1}. 
\]
This problem is used as state equation in  our optimal control problem (Problem~\ref{prob:main}),
where we choose $\beta = -1$, $\sigma = 0$, $f=0$, $u_d$ to be the exact solution of
the forward problem, and the boundary conditions on $u$ to be as for the forward problem.
The analytical solution of the optimal control problem is 
\[
q(x) = 0, \quad w(x)= 0, \quad u(x) = \frac{e^{-x/\varepsilon} - 1}{e^{-1/\varepsilon} - 1}. 
\]
We use the discretization spaces 
\[
Q_h = S_{p,p-3,\ell}(0,1) \quad \text{and} \quad  U_h = \left\lbrace u_h\in S_{p,\ell}(0,1)\, : \, u(0) = 0,\, u(1)= 1 \right\rbrace
\]
for the optimal control problem, which satisfy the
condition (\ref{eq:conformingDisc}). We compare the numerical solution for the state with the numerical solution of the forward problem, where we use $U_h$ as trial and test space. No stabilization techniques are used. The diffusion is set to $\varepsilon = 0.01$ and $\alpha = 0.001$ for the optimal control problem. Three observation domains are considered: Full observation, that is, $\mathcal{O}=\Omega=(0,1)$, and partial observation on $\mathcal{O}=(0,\frac{1}{4})$ and on $\mathcal{O}=(\frac{3}{4},1)$. The numerical solutions are displayed in Figures~\ref{plot:Full} to \ref{plot:EndRefine}. The plots indicate that the forward solution is unstable for coarse discretizations. The non-physical oscillations start in the boundary layer and propagate into the remainder of the computational domain. These kinds of instabilities are often remedied by upwind and/or Petrov-Galerkin schemes~\cite{brooks1982streamline}. We remark though that our Petrov-Galerkin like approach for the state equation does not resemble any of the common Petrov-Galerkin schemes for this equation, as far as we know. The state solution (of the optimal control problem) does not have these instabilities.  In fact, the state operator $K_h$ is discretized with a Petrov–Galerkin method as the trial space is $U_h$ and the test space is $Q_h$.  
\begin{figure}[h]
  \center
  \includegraphics[width=0.45\textwidth]{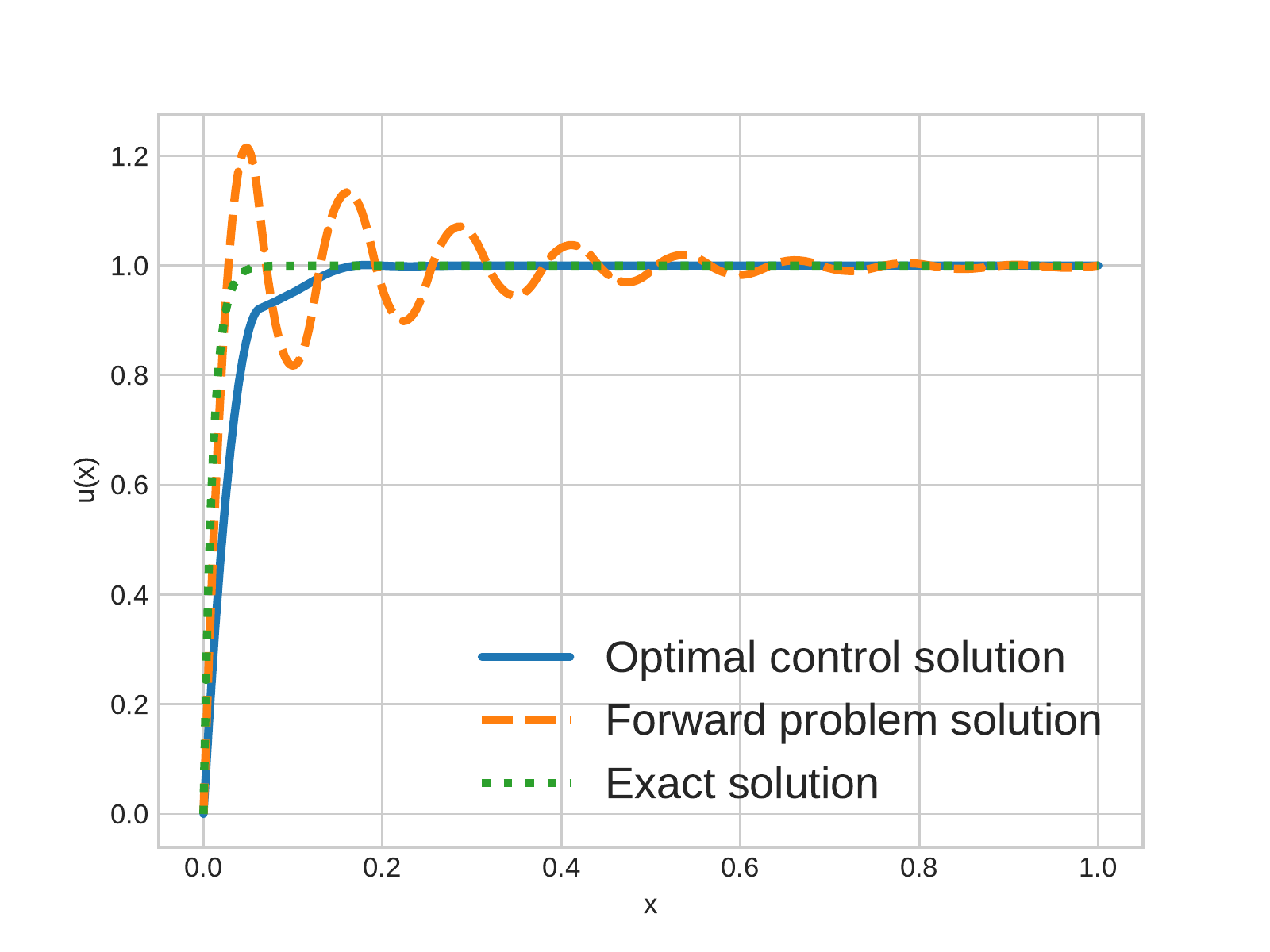}
  \includegraphics[width=0.45\textwidth]{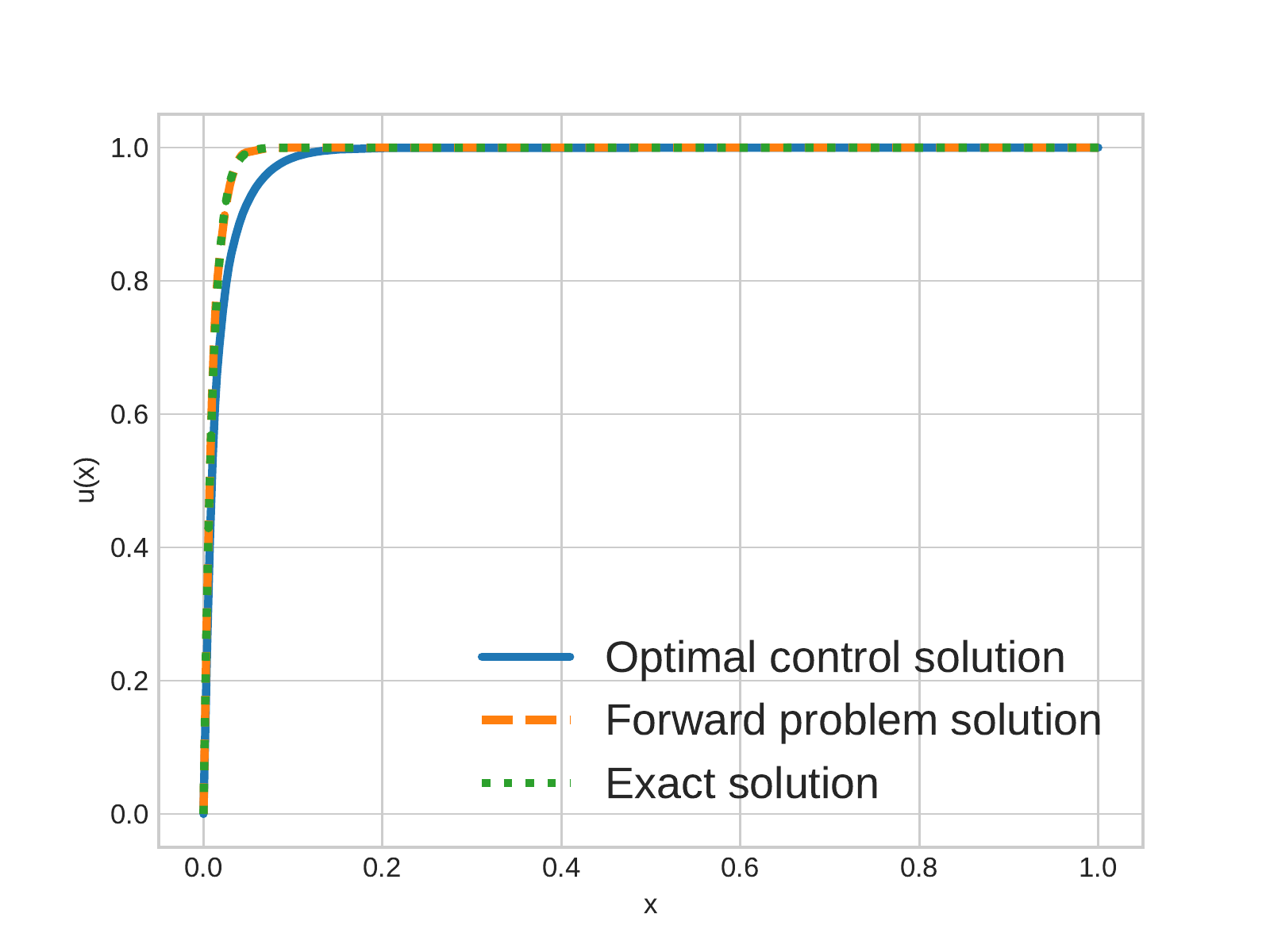}
  \caption{Full observation on $\mathcal{O} = (0,1)$ with $p = 2$ (both), $\ell = 4$ (left), $\ell = 6$ (right).}
  \label{plot:Full}
\end{figure}
\begin{figure}[h]
  \center
  \includegraphics[width=0.45\textwidth]{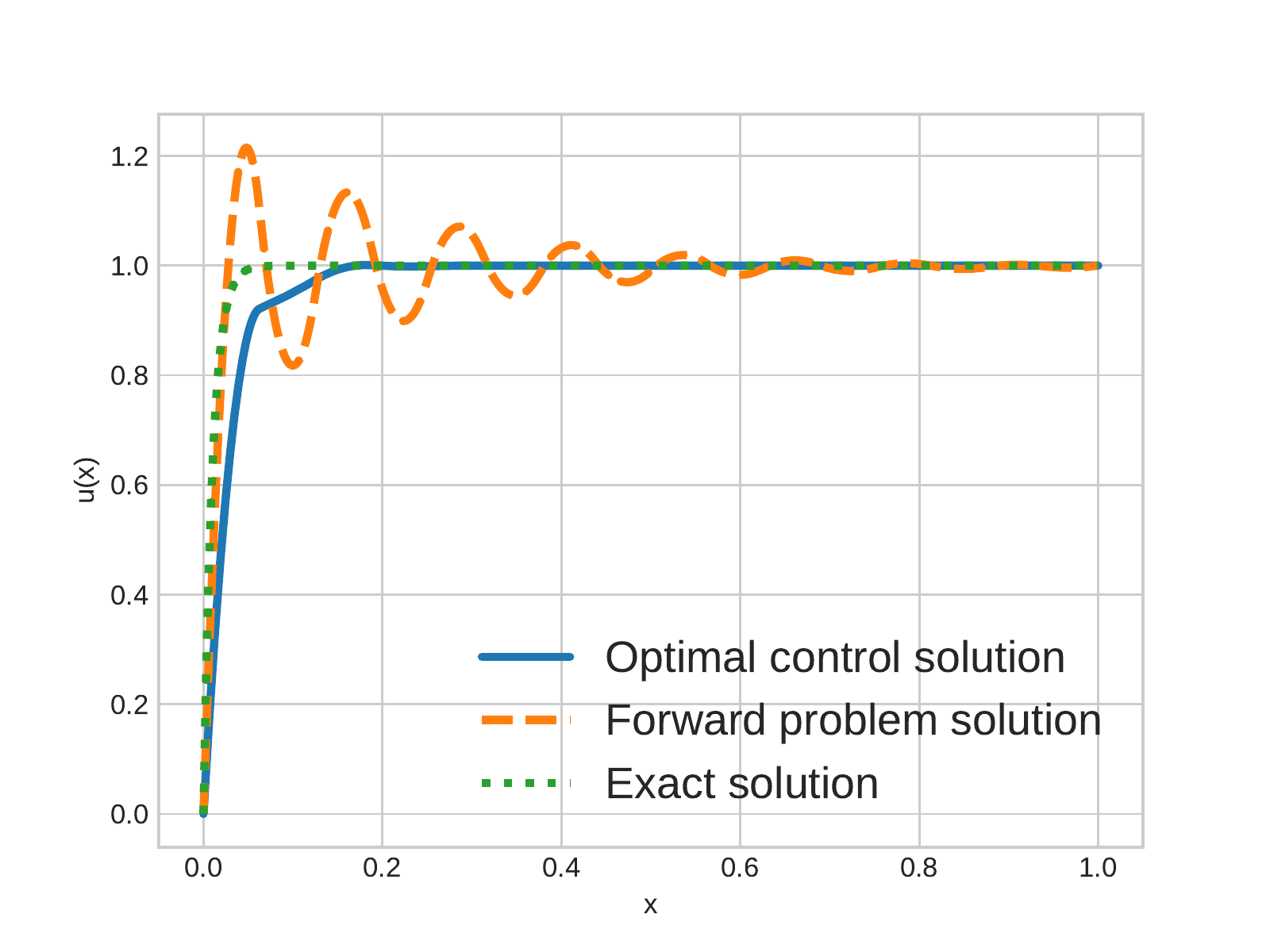}
  \includegraphics[width=0.45\textwidth]{tr6front.pdf}
  \caption{Partial observation on $\mathcal{O} = (0,\frac{1}{4})$ with $p = 2$ (both), $\ell = 4$ (left), $\ell = 6$ (right).}
  \label{plot:Front}
\end{figure}
\begin{figure}[h]
  \center
  \includegraphics[width=0.45\textwidth]{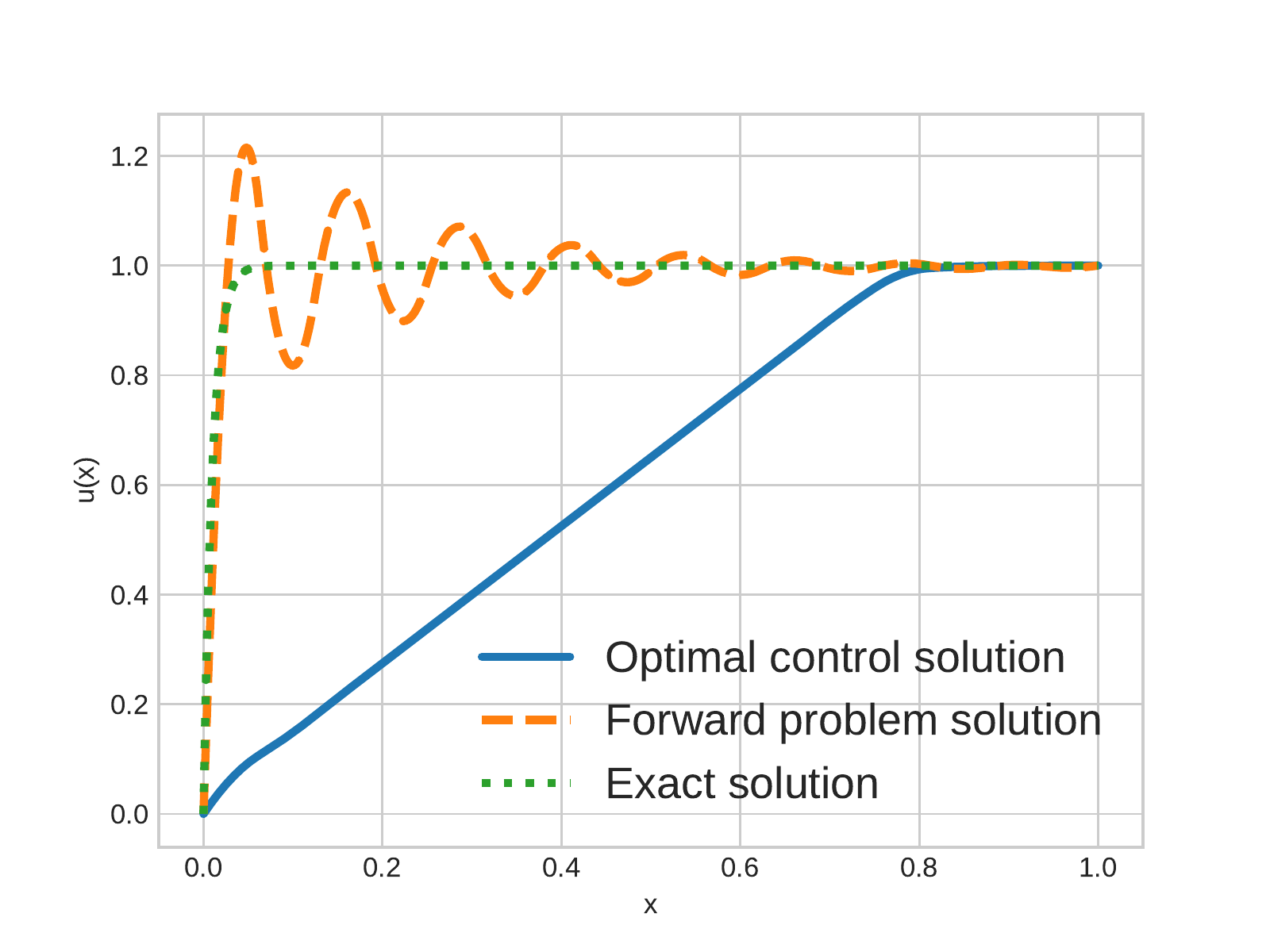}
  \includegraphics[width=0.45\textwidth]{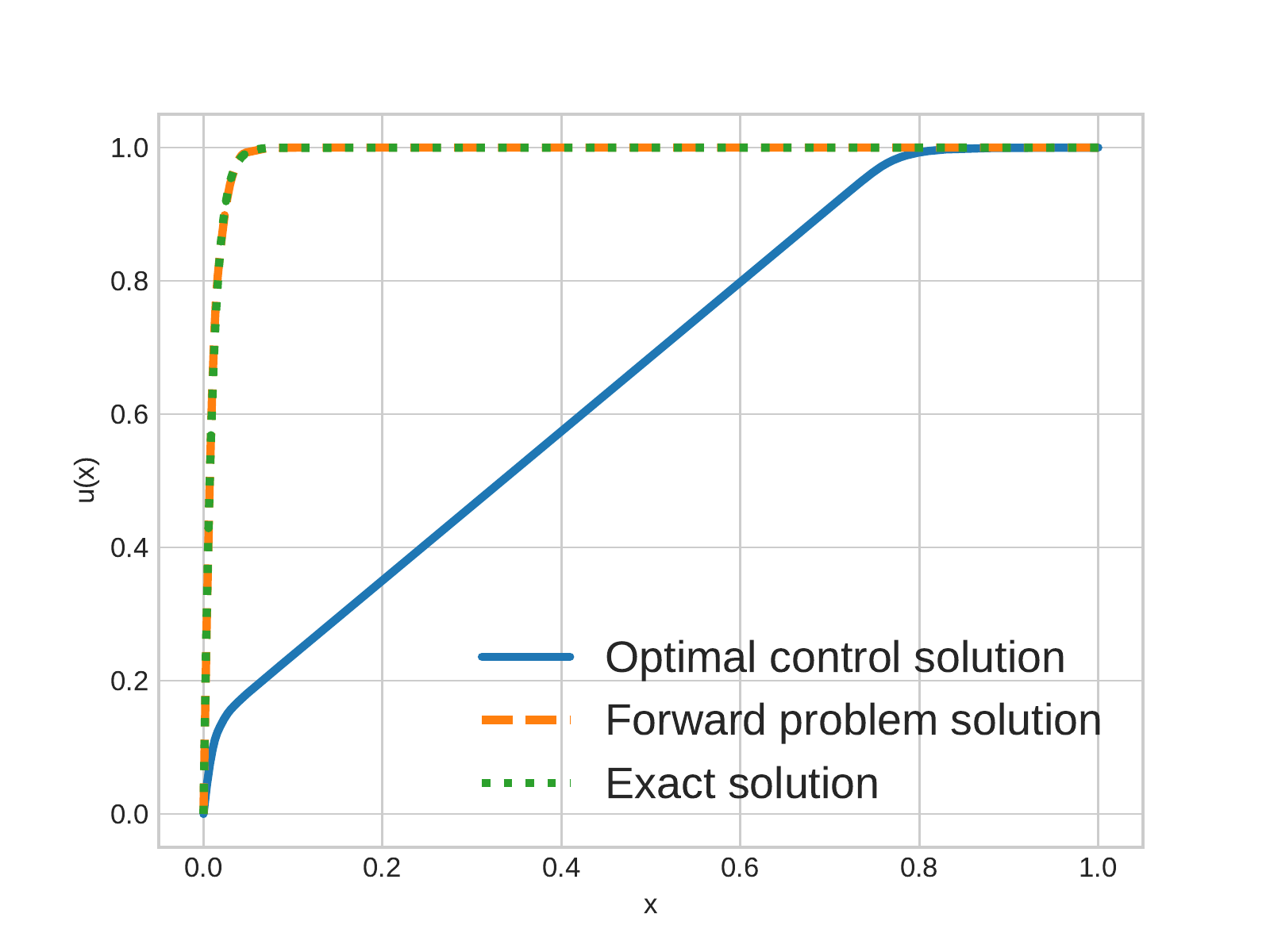}
  \caption{Partial observation on $\mathcal{O} = (\frac{3}{4},1)$ with $p = 2$ (both), $\ell = 4$ (left), $\ell = 6$ (right).}
  \label{plot:End}
\end{figure}

\begin{figure}[h]
  \center
  \includegraphics[width=0.45\textwidth]{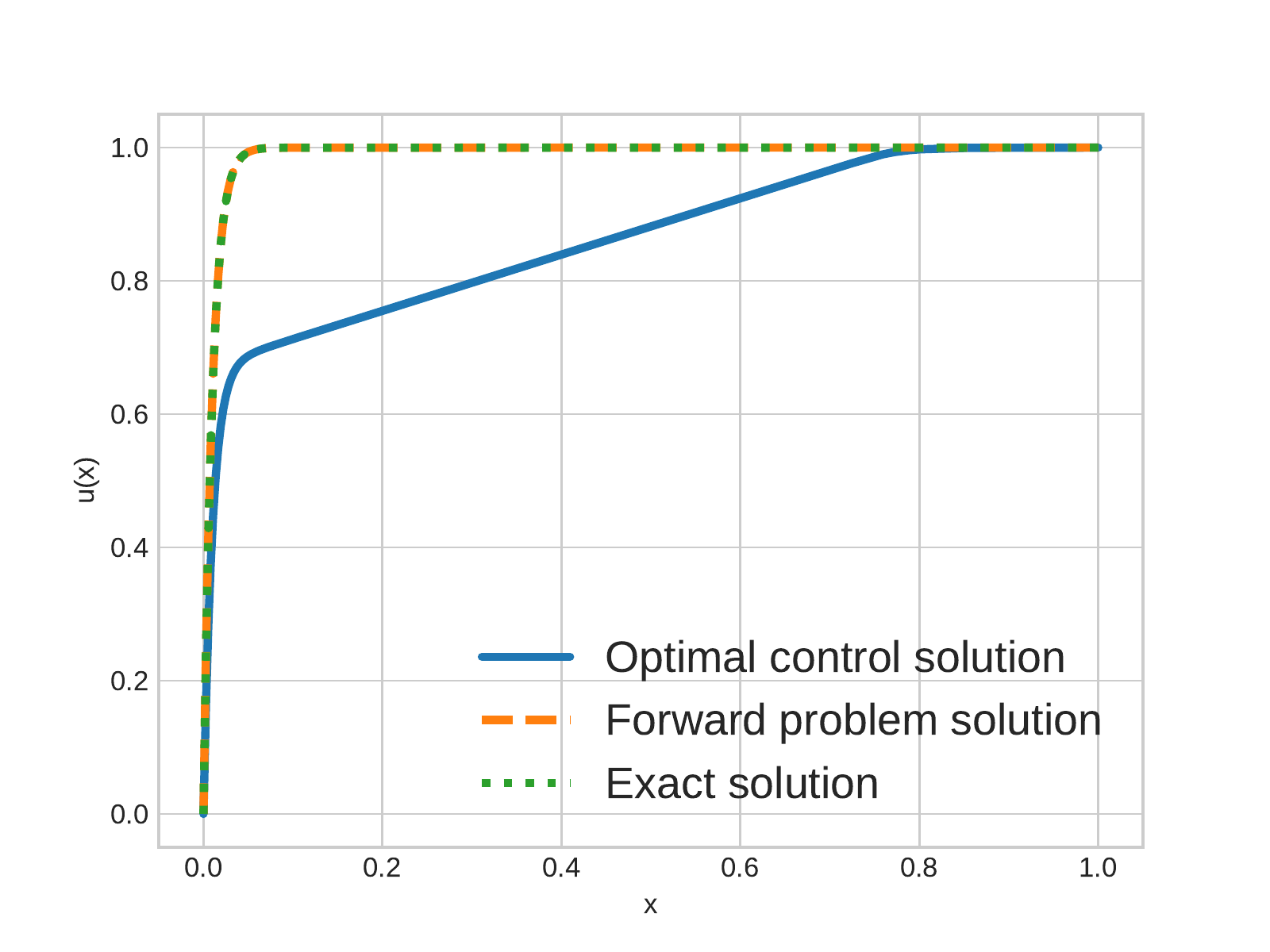}
  \includegraphics[width=0.45\textwidth]{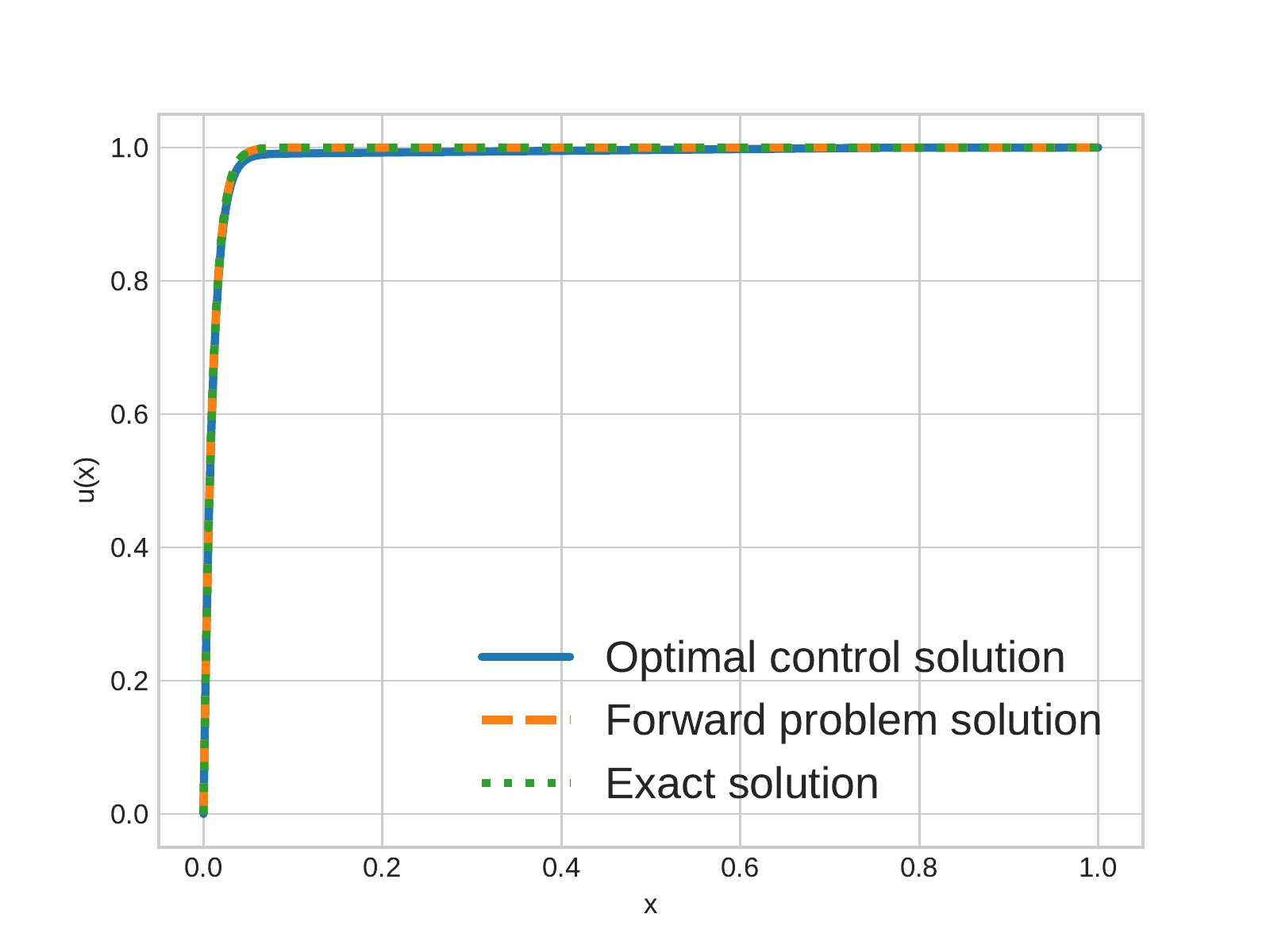}
  \caption{Partial observation on $\mathcal{O} = (\frac{3}{4},1)$ with $p = 2$, $\ell = 8$ (left) and  $p = 4$, $\ell = 6$ (right).}
  \label{plot:EndRefine}
\end{figure}

In Figure~\ref{plot:Front}, we consider the optimal control problem with observation on $(0,\frac{1}{4})$. This is only a quarter of the whole domain, but it is located at the boundary layer. The solutions for
the state variable are almost identical to those obtained for the case of full observation.

Next we look at the solution where the observation domain is $(\frac{3}{4},1)$. Here the solution is almost constant ($u(x) \approx 1$). From Figures~\ref{plot:End} and \ref{plot:EndRefine}, we see that the approximation is not good. In the left plot of Figure~\ref{plot:End}, we see that the boundary layer is not captured. However, the error does not propagate into the observation domain. For $h$-refinement, see Figure~\ref{plot:End} (right) and Figure~\ref{plot:EndRefine} (left), we observe that the approximation improves slowly.
For $p$-refinement, see Figure~\ref{plot:EndRefine} (right), the approximation improves
significantly. Since we use splines, increasing the spline degree by one means that the
number of degrees of freedom is only increased by one, while each $h$-refinement step
doubles the number of degrees of freedom.

\begin{remark}
The effect of increasing the spline degree compared to $h$-refinement as shown in Figure~\ref{plot:EndRefine} is somewhat surprising. We are not completely sure why larger spline degrees are so effective. Unfortunately, the error estimate in Theorem~\ref{theo:errorEst} does not provide any explanation for this behavior. Further analysis is needed to explain this properly. 
\end{remark}

\section{Numerical experiments for exact and inexact preconditioners}
\label{sec:num}

In this section, we analyze the convergence of Krylov space solvers
when the proposed preconditioner is used. In the first subsection, we consider an exact realization of the preconditioner. A multigrid approximation is then considered in the second subsection.

\begin{figure}[h]
    \center
    \includegraphics[width=0.3\textwidth]{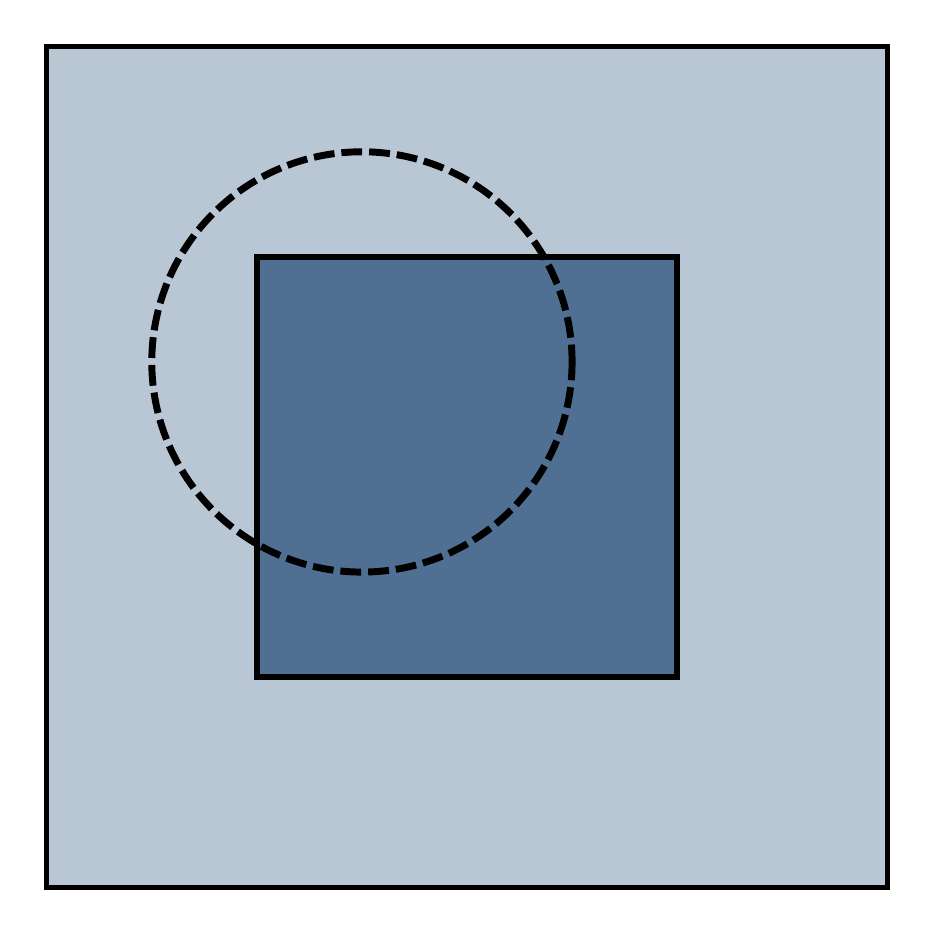}\quad
    \includegraphics[width=0.3\textwidth]{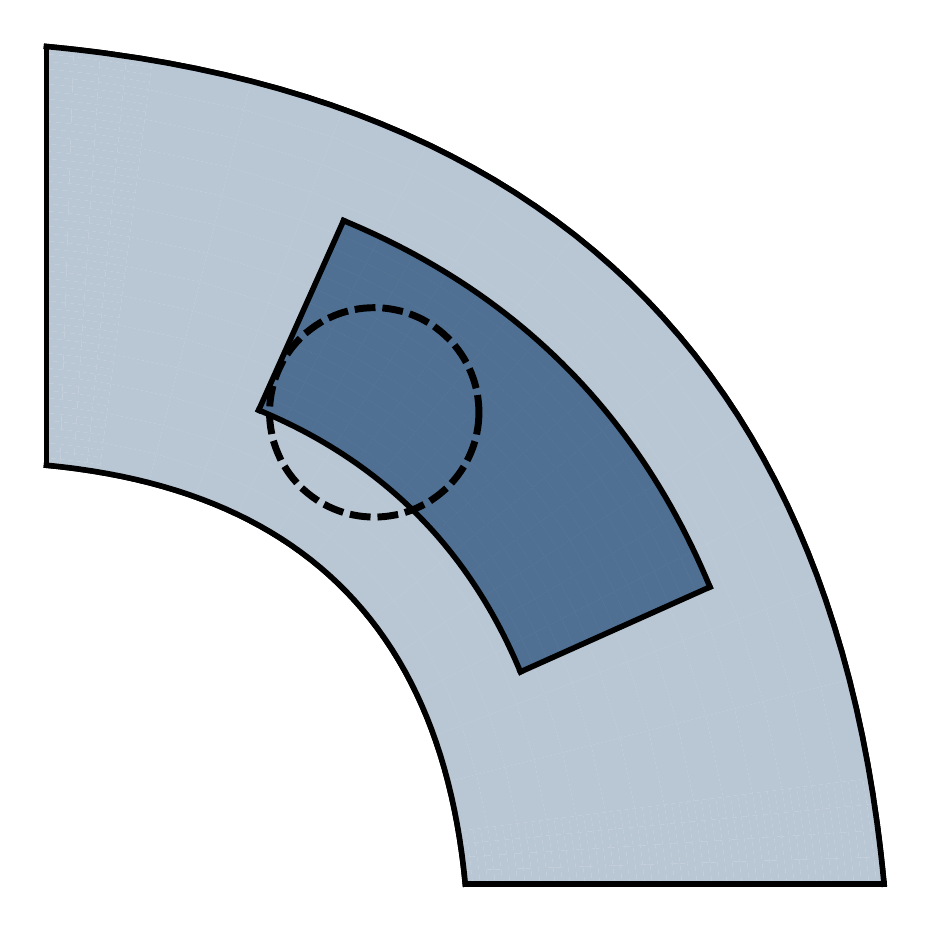}
    \caption{Computational domains $\Omega$, partial observation domains $\mathcal{O}$ (dark blue), and support of desired state (inside of dashed circles).}
    \label{fig:2DDomain}
\end{figure}

We have done the numerical experiments for two model domains, in both cases for $d=2$.
The first domain is a box-domain, more precisely, $\Omega$ is the unit square, see Figure~\ref{fig:2DDomain} (left), which is parameterized with the identity function.
For this domain, the conditions of Theorem~\ref{theorm:conformingSplines} are satisfied.
In Model problem~\ref{mp:1}, we have full observation and in Model problem~\ref{mp:1a}, the observation is restricted to the subdomain represented by the smaller area in Figure~\ref{fig:2DDomain} (left).
In all model problems, the desired state $u_d$ is a step function with value $u_d = 1$
inside a circle and with value $u_d = 0$ outside the circle. The support of $u_d$ is shown as the dashed lines in Figure~\ref{fig:2DDomain}. 

\begin{modelproblem}[Unit square and constant convection with full observation]
    \label{mp:1}
    Let $\Omega := \mathcal{O} := (0,1)^2$ be the computational domain, which is also the observation domain. The convection is $\beta = (-2,1)$ and there is no reaction $\sigma = 0$ or source term $f = 0$. The desired state is  
\[
    u_d(x,y)= 
\begin{cases}
    1 \quad &\text{if}\quad (x-\frac38)^2 + (y-\frac58)^2 \leq \frac{1}{16}\\
    0 \quad    & \text{otherwise.}
\end{cases}
\]
The diffusion $\varepsilon$ and regularization parameter $\alpha$ will vary.
\end{modelproblem}

\begin{modelproblem}[Unit square and constant convection with limited observation]\label{mp:1a}
    Let $\Omega = (0,1)^2$ be the computational domain and $\mathcal O = (\tfrac14,\tfrac34)^2$
    be the observation domain. The remainder of this problem is the
    same as for Model problem~\ref{mp:1}.
\end{modelproblem}

Furthermore, we consider a non-trivial geometry $\Omega$, which is a approximation of a quarter annulus by means of a B-spline parameterization, see Figure~\ref{fig:2DDomain} (right).
Again, Model problem~\ref{mp:2} is a problem with full observation and the observation domain in Model problem~\ref{mp:2a} is the smaller area in Figure~\ref{fig:2DDomain} (right).
We observe that for this domain, the conditions of Theorem~\ref{theorm:conformingSplines} are not satisfied.

\begin{modelproblem}[Quarter annulus and varying convection with full observation]\label{mp:2}
    Let $\Omega = \mathcal O = \mathbf{G}((0,1)^2)$ with $\mathbf{G}:(0,1)^2 \rightarrow \mathbb{R}^2 $ and
    \begin{equation}\label{eq:G}
        \mathbf{G}(\widehat{x}) = 
        \begin{pmatrix}
        (1 + \widehat{x}_1) (1-\widehat{x}_2) (1 + 2 (\sqrt2-1) \widehat{x}_2)\\
        (1 + \widehat{x}_1) \widehat{x}_2 (2 \sqrt2 -1- 2 (\sqrt2-1) \widehat{x}_2)

        \end{pmatrix}
    \end{equation}
    be the computational domain, which is also
    the observation domain. The convection is $\beta = (y,1+x^2)$ and there is no reaction $\sigma = 0$ or source term $f = 0$. The desired state is  
\[
    u_d(x,y)= 
\begin{cases}
    1, \quad &\text{if}\quad (x-x_0)^2 + (y-y_0)^2 \leq \frac{1}{16}\\
    0, \quad    & \text{otherwise,}
\end{cases}
\]
where $(x_0,y_0) = \mathbf{G}(\frac38,\frac58)$.
The diffusion $\varepsilon$ and regularization parameter $\alpha$ will vary.
\end{modelproblem}

\begin{modelproblem}[Quarter annulus and varying convection with limited observation]\label{mp:2a}
    Let $\Omega = \mathbf{G}((0,1)^2)$ be the computational domain and $\mathcal O = \mathbf{G}((\tfrac14,\tfrac34)^2)$ be the observation domain, where $\textbf G$ is as in~(\ref{eq:G}). The remainder of this problem is the same as for Model problem~\ref{mp:2}.
\end{modelproblem}

For all model problems, we consider a discretization of the optimality system using the spaces given in~(\ref{eq:stateSpaceDisc}) as outlined in Section~\ref{sec:IGA}. The resulting linear system of equations
\[
\mathcal{A}_h \underline{\mathbf{x}}_h = \underline{\mathbf{b}}_h 
\]
is solved using the MINRES method, preconditioned with the proposed Schur complement preconditioner (\ref{eq:SchurNormDisc}). We use a random initial guess $\underline{\textbf{x}}_{h,0}$. The stopping criterion is 
\[
\|\underline{\mathbf{r}}_k\| \leq 10^{-8}\|\mathbf{\underline{r}}_0\|,
\]
where $\underline{\mathbf{r}}_k := \underline{\mathbf{b}}_h - \mathcal{A}_h  \underline{\mathbf{x}}_{h,k}$ denotes the residual and $\| \cdot \|$ is the Euclidean norm. 

\subsection{Results with exact preconditioner}
\label{sec:num1}
In this section, we present the results for the Schur complement preconditioner (\ref{eq:SchurNormDisc})
when realized using a sparse Cholesky decomposition. 

Table~\ref{t:ItNumParaDiff} shows the iteration numbers needed to reach the stopping criteria for full and partial observation (Model problems \ref{mp:1} and \ref{mp:1a}). In these tables, $\alpha$ and $\varepsilon$ are varied, while $p = 2$ and $\ell = 6$ are fixed. In Table~\ref{t:ItNumParah}, we set $\varepsilon = 10^{-3}$ and vary the refinement level $\ell$ and $\alpha$. From the tables, we observe that for the partial observation problem, we need a few more iterations for small values of $\alpha$. This is probably because $M_{\mathcal{O},h}$ is singular in case of
partial observation.
The iteration numbers are relatively small for all considered values of $\alpha$, $\varepsilon$ and $\ell$. This is predicted by the theory as Model problems~\ref{mp:1} and \ref{mp:1a} satisfy the conditions of Theorem~\ref{theorm:conformingSplines}.
\begin{table}[ht]
{\footnotesize
\caption{Iteration numbers: Model problem \ref{mp:1} (left) and \ref{mp:1a} (right),  $p = 2$, $\ell = 6$.}\label{t:ItNumParaDiff}
\begin{center}
  \begin{tabular}{| c || c | c | c | c | }
    \hline
    $\varepsilon \;\backslash\; \alpha$ & $10^0$ & $10^{-3}$ & $10^{-6}$ & $10^{-9}$ \\ \hline \hline
    $10^{0}$  &     12 & 26& 60& 72  \\  \hline   
    $10^{-3}$  &   15 & 47& 26& 11  \\  \hline
    $10^{-6}$  &   15 & 46& 26& 11  \\  \hline
    $10^{-9}$  &   15 & 46& 26& 11  \\  \hline
  \end{tabular}$\quad$
  \begin{tabular}{| c || c | c | c | c | }
    \hline
    $\varepsilon \;\backslash\; \alpha$ & $10^0$ & $10^{-3}$ & $10^{-6}$ & $10^{-9}$ \\ \hline \hline
    $10^{0}$  &     12 & 20& 57& 78  \\  \hline   
    $10^{-3}$  &   15 & 41& 54& 19  \\  \hline
    $10^{-6}$  &   14 & 41& 53& 19  \\  \hline
    $10^{-9}$  &   14 & 41& 53& 19  \\  \hline
  \end{tabular}
\end{center}
}
\end{table}
\begin{table}[ht]
{\footnotesize
\caption{Iteration numbers: Model problem \ref{mp:1} (left) and \ref{mp:1a} (right),  $p = 2$, $\varepsilon = 10^{-3}$.}\label{t:ItNumParah}
\begin{center}
  \begin{tabular}{| c || c | c | c | c | }
    \hline
    $\ell \;\backslash\; \alpha$ & $10^0$ & $10^{-3}$ & $10^{-6}$ & $10^{-9}$ \\ \hline \hline

    $4$  &   15 & 46& 14& 7  \\  \hline
    $5$  &   15 & 47& 19& 8  \\  \hline
    $6$  &   15 & 47& 26& 11  \\  \hline
    $7$  &   15 & 46& 38& 11  \\  \hline
  \end{tabular}$\quad$
  \begin{tabular}{| c || c | c | c | c | }
    \hline
    $\ell \;\backslash\; \alpha$ & $10^0$ & $10^{-3}$ & $10^{-6}$ & $10^{-9}$ \\ \hline \hline

    $4$  &   15 & 46& 35& 15  \\  \hline
    $5$  &   15 & 43& 44& 17  \\  \hline
    $6$  &   15 & 41& 54& 19  \\  \hline
    $7$  &   15 & 39& 55& 22  \\  \hline
  \end{tabular}
\end{center}
}
\end{table}

Next, we consider Model problems~\ref{mp:2} and~\ref{mp:2a}, which are the problems where the computational domain is a quarter annulus. The iteration numbers are shown in Tables~\ref{t:ItNumQADiff} and \ref{t:ItNumQAh}. Note that the conditions of Theorem~\ref{theorm:conformingSplines} are not satisfied. Nevertheless, the iteration numbers are comparable with those of Tables~\ref{t:ItNumParaDiff} and \ref{t:ItNumParah}.
\begin{table}[ht]
{\footnotesize
\caption{Iteration numbers: Model problem \ref{mp:2} (left) and \ref{mp:2a} (right), $p = 2$, $\ell = 6$.}\label{t:ItNumQADiff}
\begin{center}
  \begin{tabular}{| c || c | c | c | c | }
    \hline
    $\varepsilon \;\backslash\; \alpha$ & $10^0$ & $10^{-3}$ & $10^{-6}$ & $10^{-9}$ \\ \hline \hline
    $10^{0}$   &   17 & 41& 62& 64  \\  \hline   
    $10^{-3}$  &   18 & 48& 29& 11  \\  \hline
    $10^{-6}$  &   18 & 48& 29& 11  \\  \hline
    $10^{-9}$  &   18 & 48& 29& 11  \\  \hline
  \end{tabular}$\quad$ 
  \begin{tabular}{| c || c | c | c | c | }
    \hline
    $\varepsilon \;\backslash\; \alpha$ & $10^0$ & $10^{-3}$ & $10^{-6}$ & $10^{-9}$ \\ \hline \hline
    $10^{0}$   &   17 & 32& 60& 76  \\  \hline   
    $10^{-3}$  &   17 & 46& 54& 25  \\  \hline
    $10^{-6}$  &   17 & 46& 54& 25  \\  \hline
    $10^{-9}$  &   17 & 46& 54& 25  \\  \hline
  \end{tabular}
\end{center}
}
\end{table}
\begin{table}[ht]
{\footnotesize
\caption{Iteration numbers: Model problem \ref{mp:2} (left) and \ref{mp:2a} (right),  $p = 2$, $\varepsilon = 0.001$.}\label{t:ItNumQAh}
\begin{center}
  \begin{tabular}{| c || c | c | c | c | }
    \hline
    $\ell \;\backslash\; \alpha$ & $10^0$ & $10^{-3}$ & $10^{-6}$ & $10^{-9}$ \\ \hline \hline

    $4$  &   18 & 47& 16& 10 \\  \hline
    $5$  &   18 & 48& 21& 11 \\  \hline
    $6$  &   18 & 48& 29& 11  \\  \hline
    $7$  &   16 & 48& 42& 12  \\  \hline
  \end{tabular}$\quad$
  \begin{tabular}{| c || c | c | c | c | }
    \hline
    $\ell \;\backslash\; \alpha$ & $10^0$ & $10^{-3}$ & $10^{-6}$ & $10^{-9}$ \\ \hline \hline

    $4$  &   16 & 48& 40& 25  \\  \hline
    $5$  &   16 & 46& 46& 25  \\  \hline
    $6$  &   17 & 46& 54& 25  \\  \hline
    $7$  &   16 & 46& 55& 28  \\  \hline
  \end{tabular}
\end{center}
}
\end{table}

\begin{remark}
For $\varepsilon = 1$, the iteration numbers in Table~\ref{t:ItNumParaDiff} (and Table~\ref{t:ItNumQADiff}) are growing as $\alpha$ becomes smaller. This may appear strange since we have proven that the condition number (for Table~\ref{t:ItNumParaDiff}) is less then 4.05. Describing convergence estimates for Krylov subspace methods in term of only the condition number can be misleading and/or insufficient, cf. \cite{malek2014preconditioning}.
The different iteration numbers for various values of $\varepsilon$ and $\alpha$ can be explained by the distribution of the eigenvalues. For small iteration numbers the eigenvalues are more clustered. For $\varepsilon = 1$, the iteration numbers starts decreasing when $\alpha< 10^{-9}$.
\end{remark}

\subsection{Results with inexact preconditioner}
\label{sec:num2}

So far, we have realized the proposed preconditioners using sparse direct solvers.
This approach works well for mid-sized problems. For large-sized problems, alternatives are of interest since they might be faster
or have a smaller memory footprint. We replace $\mathcal{S}_h$ by a spectrally equivalent approximation $\widetilde{\mathcal{S}}_h$, where the action of $\widetilde{\mathcal{S}}_h^{-1}$ can be calculated efficiently.
The spectral equivalence should be robust with respect to the parameters of interest.

For the approximation of the mass matrix $M_h$, which is found in the first and the second block
of the overall preconditioner, we exploit the fact that the mass matrix on the parameter domain is
the Kronecker product of two mass matrices that correspond to the discretization of a univariate 
problem, i.e., we have
\[
    M_h = M^{(1)} \otimes M^{(2)},
\]
where $\otimes$ denotes the Kronecker product and $M^{(1)}$ and $M^{(2)}$ denote the univariate mass matrices.
For the Model problems~\ref{mp:2} and~\ref{mp:2a}, we use a similar preconditioner that incorporates a tensor-rank-1 approximation of the geometry, which is derived as follows. As common in IgA, the bilinear forms are computed by transformation to the parameter domain, i.e., we have
\[
        (u,v)_{L_2(\Omega)} = \int_0^1 \int_0^1 J(x_1,x_2) \,u(x_1,x_2) \,v(x_1,x_2) \,\mathrm d x_2 \,\mathrm d x_1,
\]
where $J(x) = |\mbox{det} \nabla \textbf G(x)|$, which we
approximate by 
\[
         (u,v)_{\widetilde M} := \frac{1}{J(\tfrac12,\tfrac12)}
        \int_0^1 \int_0^1 J(x_1,\tfrac12)\,J(\tfrac12,x_2)\,u(x_1,x_2) \,v(x_1,x_2) \,\mathrm d x_2 \,\mathrm d x_1.
\]
The corresponding mass matrix $\widetilde M_h$ has tensor-product structure:
\[
    \widetilde M_h = \widetilde M^{(1)} \otimes \widetilde M^{(2)},
\]
where $\widetilde M^{(1)}$ and $\widetilde M^{(2)}$ denote the univariate mass matrices, which are lumped with $J(x_1,\tfrac12)$ and $J(\tfrac12,x_2)$, respectively. Straight-forward computations
show that the relative condition number of the exact mass matrix and its approximation can be bounded uniformly by a term that only depends on $\textbf{G}$. For realizing the inverse of $\widetilde M_h$ efficiently, we make use of the fact that the application of the inverse of a Kronecker product to some vector can be efficiently realized using sparse direct solvers that realize the application of $(\widetilde M^{(1)})^{-1}$ and $(\widetilde M^{(2)})^{-1}$.

For the approximation of the inverse of the matrix $M_{\mathcal O,h}+ \alpha B_h$, representing a fourth-order
PDE, we use a geometric multigrid solver. Following the standard approach, we assume to have a hierarchy of quasi-uniform grids, where the grid sizes of two consecutive grids differ by a factor of two. Since we have tensor-product grids in Isogeometric Analysis, such a grid hierarchy can be easily constructed by coarsening. The coarsest grid level is chosen such that there are no inner knots.
On each of these grid levels $\ell=0,1,\ldots,L$, we introduce a discretization space $U_{h_\ell} = S_{p,\ell}(\Omega)\cap H^1_0(\Omega)$. One iterate of the multigrid solver consists of the following steps:
\begin{itemize}
    \item Apply $\nu=2$ forward Gauss-Seidel sweeps as pre-smoother.
    \item Apply coarse-grid correction. Since we have nested grids ($U_{h_\ell}\subset U_{h_{\ell+1}}$), the coarse-grid correction is realized based on canonical embedding. For $\ell>1$, the problem on the next coarser level $\ell-1$ is solved by applying $1$ step of the multigrid method recursively (V-cycle). Only on the coarsest grid level $\ell=0$, the problem is solved using a direct solver.
    \item Apply $\nu=2$ backward Gauss-Seidel sweeps as post-smoother.
\end{itemize}

The robustness of that multigrid method in the grid size is a straight-forward extension of
the known results for the biharmonic problem, cf.~\cite{sogn2018schur}.
It is worth mentioning that this argument does not cover
the robustness in any of the other parameters that affect the multigrid solver.

We again use a MINRES solver, now preconditioned with the presented
tensor-rank-one approximation of the mass matrices and with one step of the multigrid solver. The corresponding numerical results are presented in Tables~\ref{t:ItNumQAGSDiff} and \ref{t:ItNumQAGSh}. In Table~\ref{t:ItNumQAGSDiff}, we observe that the iteration counts are uniformly bounded for all choices of $\epsilon$ and $\alpha$, however with much larger values than for the exact preconditioner. This is related to the well-known fact that standard Gauss-Seidel smoothers do not perform well in the framework of Isogeometric Analysis. The convergence deteriorates particularly if the spline degree is increased, which can also be seen in  Table~\ref{t:ItNumQAGSh}. Furthermore, in Table~\ref{t:ItNumQAGSh}, we can also study the dependence of the convergence on the grid size. Although, the convergence theory predicts a robust convergence behavior, this is not observed in practice for the grid levels considered. Apparently, this is the case since the Gauss-Seidel smoother does not work well for spline bases, even for moderate values of $p$, cf., e.g.,~\cite{sogn2019robust}.

\begin{table}[ht]
{\footnotesize
\caption{Iteration numbers: Model problem \ref{mp:2} (left) and \ref{mp:2a} (right), $p = 2$, $\ell = 6$.}\label{t:ItNumQAGSDiff}
\begin{center}
  \begin{tabular}{| c || c | c | c | c | }
    \hline
    $\varepsilon \;\backslash\; \alpha$ & $10^0$ & $10^{-3}$ & $10^{-6}$ & $10^{-9}$ \\ \hline \hline
    $10^{0}$   & 128 & 108& 104& 67  \\  \hline   
    $10^{-3}$  & 169 & 81& 42& 26  \\  \hline
    $10^{-6}$  & 179 & 81& 42& 26  \\  \hline
    $10^{-9}$  & 179 & 81& 42& 26  \\  \hline
  \end{tabular}$\quad$
  \begin{tabular}{| c || c | c | c | c | }
    \hline
    $\varepsilon \;\backslash\; \alpha$ & $10^0$ & $10^{-3}$ & $10^{-6}$ & $10^{-9}$ \\ \hline \hline
    $10^{0}$   &   128& 104& 132& 175  \\  \hline   
    $10^{-3}$  &   171& 112& 139& 172  \\  \hline
    $10^{-6}$  &   178& 112& 142& 174  \\  \hline
    $10^{-9}$  &   178& 112& 142& 174  \\  \hline
  \end{tabular}
\end{center}
}
\end{table}

\begin{table}[ht]
{\footnotesize
\caption{Iteration numbers: Model problem \ref{mp:2} (left) and \ref{mp:2a} (right), $\alpha = 0.001$, $\varepsilon = 0.001$.}\label{t:ItNumQAGSh}
\begin{center}
  \begin{tabular}{| c || c | c | c | c | }
    \hline
    $\ell \;\backslash\; p$ & $2$ & $3$ & $5$ & $7$ \\ \hline \hline
    $4$  & 49 & 64& 191& 730  \\  \hline   
    $5$  & 55 & 61& 150& 510  \\  \hline
    $6$  & 81 & 86& 137& 440  \\  \hline
    $7$  & 118 &134&179& 380  \\  \hline
  \end{tabular}$\quad$
  \begin{tabular}{| c || c | c | c | c | }
    \hline
    $\ell\;\backslash\; p$ & $2$ & $3$ & $5$ & $7$ \\ \hline \hline
    $4$  & 57& 63& 152& 567 \\  \hline   
    $5$  & 77& 85& 135& 437 \\  \hline
    $6$  & 112& 123& 170& 414 \\  \hline
    $7$  & 152& 162& 204& 364 \\  \hline
  \end{tabular}
\end{center}
}
\end{table}

To obtain a better convergence behavior, we consider a second approach for the smoother: a macro Gauss-Seidel approach. This approach makes
use of the tensor-product structure of the discretization. For two dimensions, the degrees of freedom can be represented as a grid in the plane, see Figure~\ref{fig:macroGS} (left). Each dot represents one degree of freedom or basis function. We start by introducing a macro grid that groups $a\times a$ degrees of freedom. (If the number of rows or columns is not divisible by $a$, the last macro elements in each direction are correspondingly smaller.) The macro grid is depicted in Figure~\ref{fig:macroGS} (left).

Each of the macro elements consists of the degrees of freedom that belong to the element of the macro grid and of degrees of freedom of the neighboring elements of the macro grid. Here, we use $b$ additional rows and columns each on each of the sides, see Figure~\ref{fig:macroGS} (right).

\begin{figure}[h]
  \center
  \includegraphics[width=0.35\textwidth]{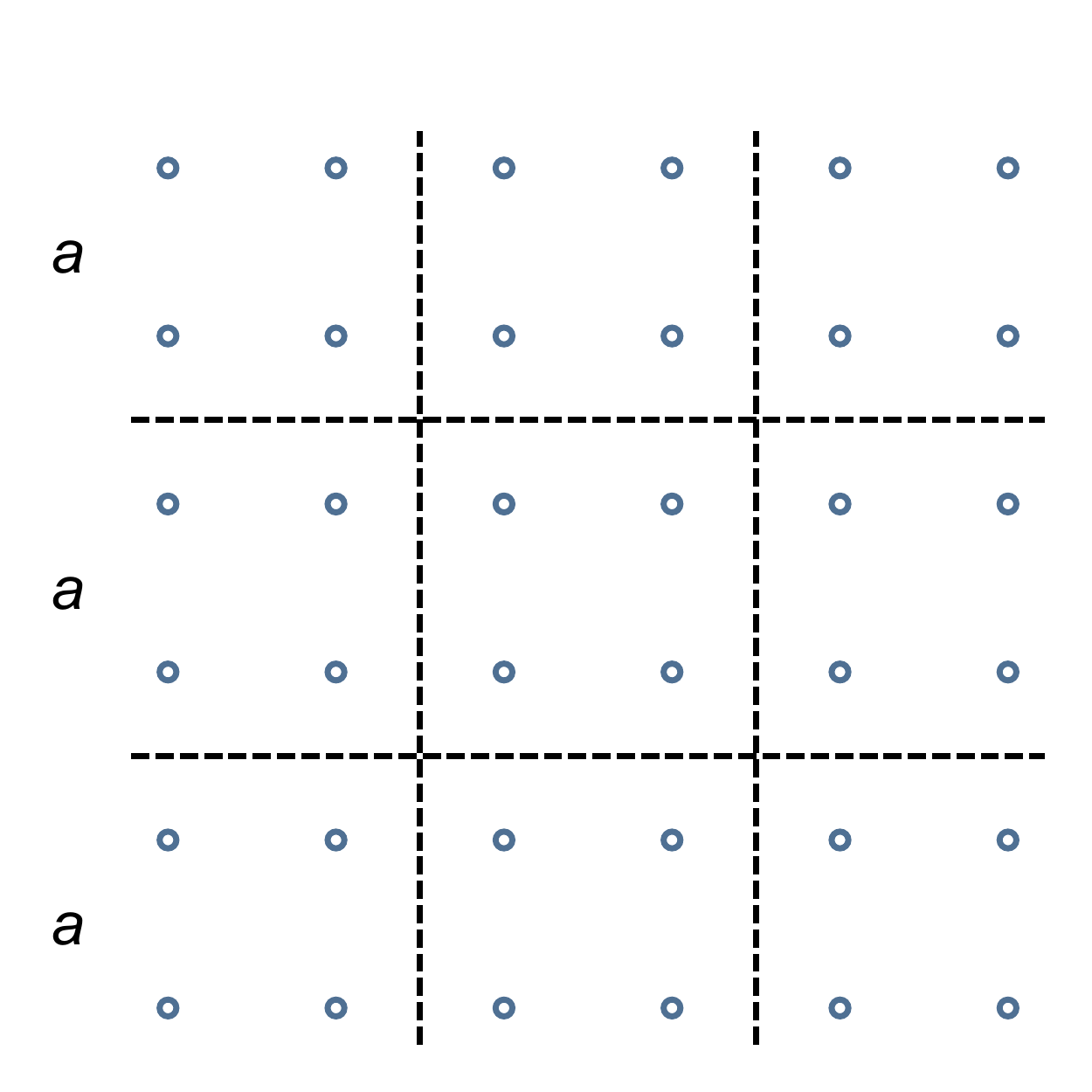}\quad
  \includegraphics[width=0.35\textwidth]{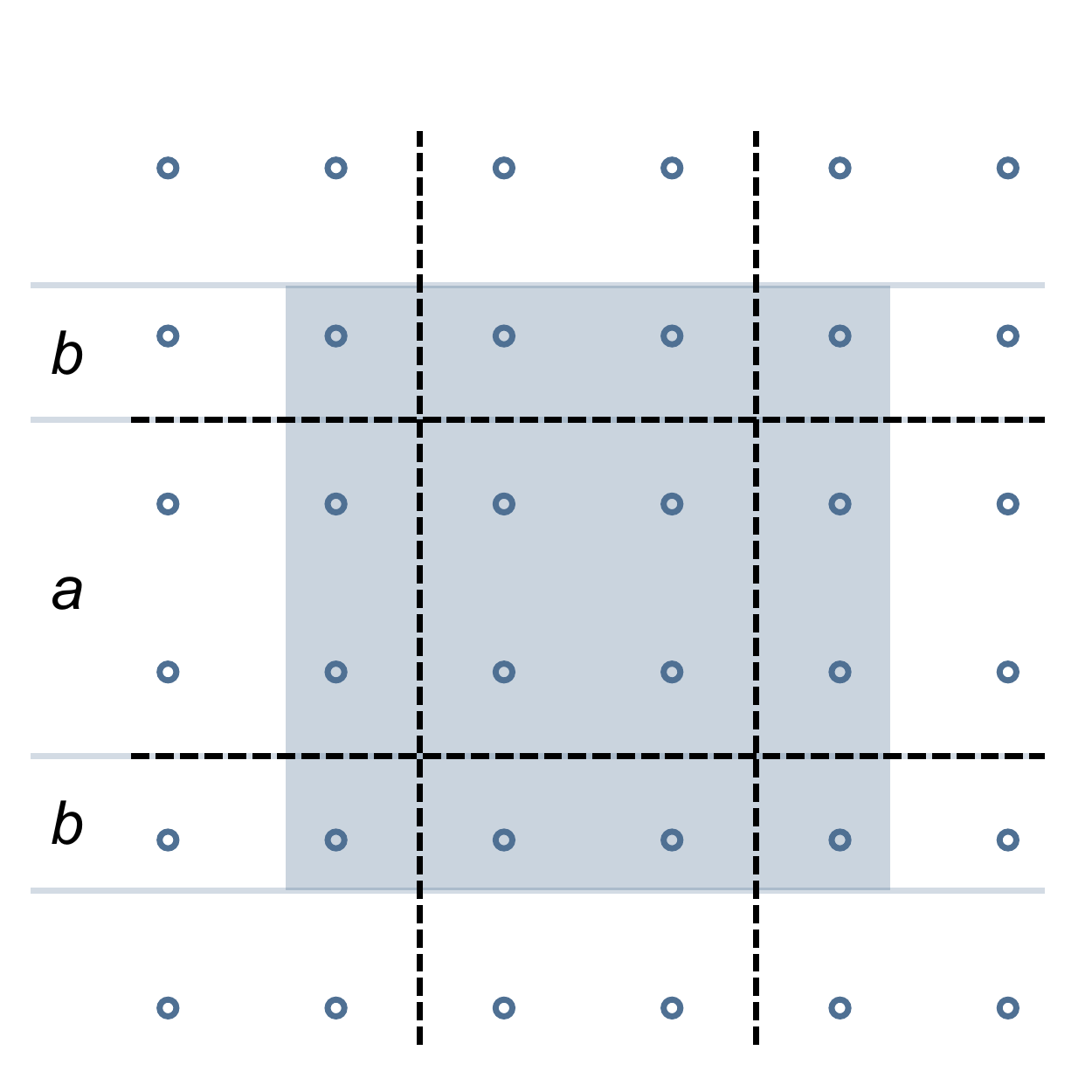}
  \caption{The construction of the macro Gauss-Seidel approach.}
  \label{fig:macroGS}
\end{figure}

Then, a macro Gauss-Seidel sweep is a standard multiplicative Schwarz method, where the subspaces are the degrees of freedom that belong to each of the macro elements. So, the choice $a:=1$ and $b:=0$ corresponds to a standard Gauss-Seidel sweep.

In the following, we use the patch size $a:=p$ and the overlap size $b:=p-1$. As for the standard Gauss-Seidel case, we apply a forward sweep for pre-smoothing and a backward sweep, i.e., with the reverse ordering of the macro elements, for post-smoothing. The problem within the (relatively small) subspaces is solved by means of a direct solver. The number of smoothing steps $\nu$ is set to $1$.

The corresponding iteration counts are presented in Tables~\ref{t:ItNumQAMGSDiff} and \ref{t:ItNumQAMGSh}. In all cases, we obtain significantly better convergence rates than for a standard Gauss-Seidel smoother. Table~\ref{t:ItNumQAMGSh} shows that the resulting method is robust in the spline degree, and that the method is quite robust in the grid size. Table~\ref{t:ItNumQAMGSDiff} shows that the overall method is also robust in the parameter $\varepsilon$ and well-bounded for $\alpha$.

\begin{table}[ht]
{\footnotesize
\caption{Iteration numbers: Model problem \ref{mp:2} (left) and \ref{mp:2a} (right), $p = 2$, $\ell = 6$.}\label{t:ItNumQAMGSDiff}
\begin{center}
  \begin{tabular}{| c || c | c | c | c | }
    \hline
    $\varepsilon \;\backslash\; \alpha$ & $10^0$ & $10^{-3}$ & $10^{-6}$ & $10^{-9}$ \\ \hline \hline
    $10^{0}$   & 50 & 51& 62& 64  \\  \hline   
    $10^{-3}$  & 91 & 49& 29& 13  \\  \hline
    $10^{-6}$  & 96 & 49& 29& 13  \\  \hline
    $10^{-9}$  & 96 & 49& 29& 13  \\  \hline
  \end{tabular}$\quad$
  \begin{tabular}{| c || c | c | c | c | }
    \hline
    $\varepsilon \;\backslash\; \alpha$ & $10^0$ & $10^{-3}$ & $10^{-6}$ & $10^{-9}$ \\ \hline \hline
    $10^{0}$  & 50 & 46 & 72& 98  \\  \hline   
    $10^{-3}$ & 94 & 77 & 99& 103  \\  \hline
    $10^{-6}$ & 96 & 77 & 99& 103  \\  \hline
    $10^{-9}$ & 96 & 77 & 99& 103  \\  \hline
  \end{tabular}
\end{center}
}
\end{table}

\begin{table}[ht]
{\footnotesize
\caption{Iteration numbers: Model problem \ref{mp:2} (left) and \ref{mp:2a} (right), $\alpha = 0.001$, $\varepsilon = 0.001$.}\label{t:ItNumQAMGSh}
\begin{center}
  \begin{tabular}{| c || c | c | c | c | }
    \hline
    $\ell \;\backslash\; p$ & $2$ & $3$ & $5$ & $7$ \\ \hline \hline
    $4$  & 47& 48& 48& 48  \\  \hline   
    $5$  & 48& 48& 48& 48  \\  \hline
    $6$  & 49& 48& 48& 48  \\  \hline
    $7$  & 64& 49& 48& 48  \\  \hline
  \end{tabular}$\quad$
  \begin{tabular}{| c || c | c | c | c | }
    \hline
    $\ell\;\backslash\; p$ & $2$ & $3$ & $5$ & $7$ \\ \hline \hline
    $4$  & 49& 48& 46& 48  \\  \hline   
    $5$  & 57& 52& 46& 46  \\  \hline
    $6$  & 77& 64& 53& 49  \\  \hline
    $7$  & 96& 80& 58& 52  \\  \hline
  \end{tabular}
\end{center}
}
\end{table}

\appendix

\section{Proof of Lemma~\ref{lemma:SFI}}
\label{sec:proof}
The inequality in Lemma~\ref{lemma:SFI} is sometimes referred to as \textit{the second fundamental inequality}, cf. \cite{ladyzhenskaya2013boundary}. For domains with polygonal (polyhedral) Lipschitz boundary the result is known, but for domains which are images of geometry mappings we were unable to find any result. We therefore provide a proof in this appendix. 
We start with providing a density result.
\begin{lemma}
\label{lemma:H3isDinH2}
Let the domain $\Omega$ have a Lipschitz boundary and be the image of a geometric mapping $\mathbf{G}:\widehat{\Omega}:=(0,1)^d\rightarrow \Omega$, where both
$\|\nabla^r \mathbf{G}\|_{L^\infty}$ and
$\|(\nabla^r \mathbf{G})^{-1}\|_{L^\infty}$ are bounded for $r\in\{1,2,3\}$, then $H^3(\Omega)\cap H^1_0(\Omega)$ is dense in $H^2(\Omega)\cap H^1_0(\Omega)$.
\end{lemma}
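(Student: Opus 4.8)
The plan is to transport everything to the parameter cube $\widehat\Omega=(0,1)^d$, run the density argument there with the Dirichlet eigenfunctions, and pull back. First I would record that, under the stated bounds on $\|\nabla^r\mathbf G\|_{L^\infty}$ and $\|(\nabla^r\mathbf G)^{-1}\|_{L^\infty}$ for $r\in\{1,2,3\}$, the pull-back $v\mapsto v\circ\mathbf G$ and the push-forward $v\mapsto v\circ\mathbf G^{-1}$ are bounded isomorphisms between $H^m(\Omega)$ and $H^m(\widehat\Omega)$ for $m\in\{1,2,3\}$, and that they preserve the homogeneous Dirichlet boundary condition because $\mathbf G$ is a bi-Lipschitz homeomorphism mapping $\partial\widehat\Omega$ onto $\partial\Omega$. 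The $m=3$ case is exactly where the $r=3$ bounds enter, via the third-order chain rule together with $\nabla(\mathbf G^{-1})=((\nabla\mathbf G)^{-1})\circ\mathbf G^{-1}$ and its first two derivatives. Consequently it suffices to prove that $H^3(\widehat\Omega)\cap H^1_0(\widehat\Omega)$ is dense in $H^2(\widehat\Omega)\cap H^1_0(\widehat\Omega)$.

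On the cube I would use the tensor-product sine basis $\phi_k(x)=2^{d/2}\prod_{j=1}^d\sin(k_j\pi x_j)$, $k\in\mathbb N^d$, an $L^2(\widehat\Omega)$-orthonormal basis of smooth functions vanishing on $\partial\widehat\Omega$ with $-\Delta\phi_k=\lambda_k\phi_k$, $\lambda_k=\pi^2|k|^2$. Given $u\in H^2(\widehat\Omega)\cap H^1_0(\widehat\Omega)$, expand $u=\sum_k c_k\phi_k$ and set $u_N:=\sum_{|k|_\infty\le N}c_k\phi_k$; each $u_N$ is a finite combination of the $\phi_k$, hence lies in $C^\infty(\overline{\widehat\Omega})\cap H^1_0(\widehat\Omega)\subset H^3(\widehat\Omega)\cap H^1_0(\widehat\Omega)$. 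Integrating by parts twice, with both boundary terms vanishing since $u$ and $\phi_k$ have zero trace, shows that the $L^2$-coefficients of $\Delta u$ are $-\lambda_k c_k$, so $\Delta u_N$ is exactly the $N$-th partial sum of $\Delta u\in L^2(\widehat\Omega)$. Therefore $u_N\to u$ and $\Delta u_N\to\Delta u$ in $L^2(\widehat\Omega)$. Applying the polyhedral case of Lemma~\ref{lemma:SFI} (the classical second fundamental inequality on the box, cf.\ \cite{ladyzhenskaya2013boundary}) to $u-u_N\in H^2(\widehat\Omega)\cap H^1_0(\widehat\Omega)$ yields $\|u-u_N\|_{H^2(\widehat\Omega)}\le C_{\widehat\Omega}\|\Delta(u-u_N)\|_{L^2(\widehat\Omega)}\to0$, which is the desired density on $\widehat\Omega$.

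Assembling the two pieces: for $u\in H^2(\Omega)\cap H^1_0(\Omega)$ put $\widehat u:=u\circ\mathbf G\in H^2(\widehat\Omega)\cap H^1_0(\widehat\Omega)$, approximate it by $\widehat u_N\in H^3(\widehat\Omega)\cap H^1_0(\widehat\Omega)$ as above, and set $u_N:=\widehat u_N\circ\mathbf G^{-1}$. Then $u_N\in H^3(\Omega)\cap H^1_0(\Omega)$ and $\|u-u_N\|_{H^2(\Omega)}\le C\,\|\widehat u-\widehat u_N\|_{H^2(\widehat\Omega)}\to0$, proving the claim.

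I expect the only real work to be the bookkeeping for the composition operators: checking that $v\mapsto v\circ\mathbf G^{-1}$ maps $H^3(\widehat\Omega)$ into $H^3(\Omega)$ with a constant depending only on $\mathbf G$ (this is precisely where the $r=3$ bounds are used), and that both composition maps send $H^1_0$ to $H^1_0$. The spectral argument on the cube is routine once the polyhedral second fundamental inequality is invoked, and nothing there is delicate.
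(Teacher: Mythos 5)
Your proposal is correct, and its overall architecture is the same as the paper's: pull back to the parameter cube via $\mathbf G$, establish density of $H^3\cap H^1_0$ in $H^2\cap H^1_0$ there, and push forward using the boundedness of the composition operators guaranteed by the bounds on $\nabla^r\mathbf G$ and $(\nabla^r\mathbf G)^{-1}$ (the paper's inequality $\|w\|_{H^2(\Omega)}\le c_g\|w\circ\mathbf G\|_{H^2(\widehat\Omega)}$ and the equivalences $v\in H^m(\Omega)\cap H^1_0(\Omega)\Leftrightarrow v\circ\mathbf G\in H^m(\widehat\Omega)\cap H^1_0(\widehat\Omega)$ play exactly the role you assign to your isomorphism bookkeeping). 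The one place you genuinely diverge is the cube-density step: the paper simply cites Grisvard (Theorem~1.6.2 of the singularities monograph), whereas you prove it directly by truncating the Dirichlet sine expansion and controlling $\|u-u_N\|_{H^2(\widehat\Omega)}$ with the polyhedral case of Lemma~\ref{lemma:SFI}. Your computation that $\Delta u_N$ is the $N$-th partial sum of $\Delta u$ (two integrations by parts, each boundary term killed by a zero trace) is sound, and there is no circularity: in the paper the box/polyhedral case of Lemma~\ref{lemma:SFI} is established independently of the density lemma, so invoking it on $(0,1)^d$ is legitimate. What your route buys is a self-contained, elementary proof of the density on the cube in place of an external citation; what it costs is nothing of substance, since the transfer argument and the role of the $r=3$ bounds are identical in both versions.
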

\begin{proof}
Let $V := H^3(\Omega)\cap H^1_0(\Omega)$ and $U := H^2(\Omega)\cap H^1_0(\Omega)$, we want that for any $\epsilon > 0$ and $u\in U$, there exist a $v\in V$ such that
\begin{equation}
\label{eq:DensePhys}
 \|u-v\|_{H^2(\Omega)} \le \epsilon.
\end{equation}
From \cite[Theorem 1.6.2]{grisvard1992singularities} we know that $\widehat V := H^3(\widehat \Omega)\cap H^1_0(\widehat \Omega)$ is dense in $\widehat U := H^2(\widehat \Omega)\cap H^1_0(\widehat \Omega)$, i.e., for any $\epsilon > 0$ and $u\in \widehat U$, there exist a $\widehat v \in \widehat V$ such that
\begin{equation}
\label{eq:DensePara}
  \|\widehat u-\widehat v\|_{H^2(\widehat \Omega)} \le \epsilon.
\end{equation}
Now, we know using the standard IgA-results that
\begin{equation}
\label{eq:IGAH2}
\|w\|_{H^2(\Omega)}	\le c_g \|w \circ \mathbf{G}\|_{H^2(\widehat \Omega)} 
\end{equation}
holds for all $w\in H^2(\Omega)$, where $c_g$ only depends on the geometry.
Moreover, we have
\[
v\in V \Leftrightarrow v\circ \mathbf{G} \in \widehat V \qquad\mbox{and}\qquad u\in U \Leftrightarrow u\circ \mathbf{G} \in \widehat U.
\]
Now we prove \eqref{eq:DensePhys}. Let $u\in U$ and $\epsilon > 0$ be given. Let $\widehat u:= u\circ \mathbf{G}$. Using \eqref{eq:DensePara},	we know that there exist a $\widehat v\in V_g$ such that
\[
\|\widehat u-\widehat v\|_{H^2(\widehat \Omega)} \le \epsilon/c_g.
\]
By choosing $v:=\widehat v\circ \mathbf{G}^{-1}$, we have
\[
\|( u- v)\circ \mathbf{G}\|_{H^2(\widehat \Omega)} \le \epsilon/c_g.
\]
and using \eqref{eq:IGAH2} consequently
\[
\| u- v\|_{H^2(\Omega)} \le \epsilon.
\]
This means that we have found a proper $v\in V$ such that \eqref{eq:DensePhys} holds.
\end{proof}
Next, we state a weighted trace theorem \cite[Theorem 1.5.1.10]{grisvard2011elliptic}.
\begin{theorem}
\label{theo:traceScaled}
Let $\Omega$ be a bounded open subset of $\mathbb{R}^d$ with Lipschitz boundary and let $T$ be the trace operator. Then there a exist a constant $c$ which only depend  on $\Omega$ such that \begin{equation}
    \int_{\partial \Omega} |Tv(x)|^2 \ d s \leq c\left[ \sqrt{\delta}\int_{ \Omega} |\nabla v(x)|^2 \ d x + \frac{1}{\sqrt{\delta}}\int_{ \Omega} |v(x)|^2 \ d x \right],
\end{equation}
hold for all $v(x)\in H^1(\Omega)$ and all $\delta \in (0,1)$.
\end{theorem}
We can now prove Lemma~\ref{lemma:SFI}.
\begin{proof}
When $\Omega$ has a polygonal (polyhedral) Lipschitz boundary the result follows from  \cite{grisvard1992singularities,grisvard2011elliptic}. A detailed proof of this case can be found in \cite[Lemma 3.3]{SogZul18}. We consider the case where $\Omega$ is the image of a geometric mapping and has Lipschitz boundary. According to \cite[Theorem 3.1.1.2]{grisvard2011elliptic} we have
\[
  \int_\Omega |\nabla \cdot \psi(x)|^2 \ d x = \int_\Omega \nabla \psi(x) : (\nabla \psi(x))^T \ dx - \int_{\partial\Omega} g(x) (\psi_n(x))^2\ ds,
\]
for all $\psi \in H^2(\Omega)^d$ with $\psi_n := \psi \cdot n$ and $\psi_T := \psi - \psi_n \, n = 0$. Here, $g(x)$ is a function which depends on the curvature of boundary $\partial \Omega$. This can be bounded from above by a constant $c_g$ depending only on $\Omega$:
\begin{equation}
\label{eq:Grisvard3112}
  \int_\Omega |\nabla \cdot \psi(x)|^2 \ d x \geq \int_\Omega \nabla \psi(x) : (\nabla \psi(x))^T \ dx - c_g\int_{\partial\Omega} (\psi_n(x))^2\ ds.
\end{equation}
Applying this inequality to $\psi = \nabla u$ with $u \in H^3(\Omega)\cap H_0^1(\Omega)$, we now bound the last term by using Theorem~\ref{theo:traceScaled}
\begin{align*}
    - c_g\int_{\partial\Omega} (\psi_n(x))^2\ ds &=  - c_g\int_{\partial\Omega} (\nabla u(x)\cdot n)^2\ ds \geq  - c_g d\int_{\partial\Omega} |\nabla u(x)|^2\ d s\\
    &\geq -c \left[\sqrt{\delta}\int_{ \Omega} |\nabla^2 u(x)|^2 \ d x + \frac{1}{\sqrt{\delta}}\int_{ \Omega} |\nabla u(x)|^2 \ d x\right].
\end{align*}
By using integration by parts, the Cauchy--Schwarz inequality and the Poincar\' e inequality, we can bound the last term by
\begin{align*}
\int_{ \Omega}  |\nabla u(x)|^2 \ d x \leq c^2_P\|\Delta u\|^2_{L^2(\Omega)},
\end{align*}
where $c_P$ is the Poincar\' e constant. Combining the last two inequalities gives
\[
 - c_g\int_{\partial\Omega} (\psi_n(x))^2\ ds \geq -c \left[\sqrt{\delta} \|\nabla^2 u\|^2_{L^2(\Omega)}  + \frac{c^2_P}{\sqrt{\delta}} \|\Delta u\|^2_{L^2(\Omega)}\right].
\]
Inserting the inequality above and $\psi=\nabla u$ into \eqref{eq:Grisvard3112} gives
\[
\|\Delta u \|^2_{L^2(\Omega)} \geq \|\nabla^2 u \|^2_{L^2(\Omega)} - c \left[\sqrt{\delta} \|\nabla^2 u\|^2_{L^2(\Omega)} + \frac{c^2_P}{\sqrt{\delta}} \|\Delta u\|^2_{L^2(\Omega)}\right].
\]
Note that this holds for any $\delta \in (0,1)$. We now choose $\delta$ such that $1-c\sqrt{\delta}$ is positive and we get
\[
\|\nabla^2 u \|_{L^2(\Omega)}  \leq c \,\|\Delta u \|^2_{L^2(\Omega)} \quad \foralls u\in H^3(\Omega) \cap H^1_0(\Omega). 
\]
We that note due to the boundary condition and the Poincar\' e inequality it follows that $\|\nabla^2 u \|_{L^2(\Omega)}$ is equivalent to the $H^2$-norm. So, we have now shown inequality~\eqref{eq:2FAPE} for $u\in H^3(\Omega) \cap H^1_0(\Omega)$. Since $H^3(\Omega) \cap H^1_0(\Omega)$ is dense in $H^2(\Omega) \cap H^1_0(\Omega)$ (Lemma~\ref{lemma:H3isDinH2}) the result also holds for all $u\in H^2(\Omega) \cap H^1_0(\Omega)$.
\end{proof}

\section{Approximation error estimates for B-splines}
\label{sec:app2}
In this Appendix, we prove Theorem~\ref{theo:errorEst} and some auxiliary results required for that proof. We consider B-splines with maximum smoothness on the parameter domain $\widehat{\Omega}:=(0,1)^d$, that is, we consider the space $S^d_{p,\ell}$. We point out that for functions in $H^2((0,1)^d) \cap H^1((0,1)^d)$ the $H^2$-semi-norm and $L^2$-norm of the Laplacian coincide, that is,
\[
\|\Delta u \|_{L^2{(0,1)^d}} = \|\nabla^2 u \|_{L^2{(0,1)^d}}\quad \foralls H^2((0,1)^d) \cap H^1((0,1)^d).
\]
For any $d\in \mathbb{N}$ and $p\in \mathbb N$ with $p \geq 3$, let $\mathbf{\Pi}_p: H^2((0,1)^d)\cap H^1_0((0,1)^d) \rightarrow S^d_{p,\ell}\cap H^1_0((0,1)^d)$ be the $H^2$-orthogonal projector, defined by
\begin{align*}
    \inner{\Delta \mathbf{\Pi}_p u}{\Delta\tilde{u}}_{L^2((0,1)^d)} &=  \inner{\Delta u}{\Delta\tilde{u}}_{L^2((0,1)^d)} \quad \foralls \tilde{u} \in S^d_{p,\ell}\cap  H^1_0.
\end{align*}
To better distinguish the univariate case ($d=1$), we write $\Pi_p := \mathbf{\Pi}_p$ for that case.

\begin{theorem}
\label{theo:L2H2dD}
Let $d\in \mathbb{N}$ and $p\in \mathbb{N}$ with $p\geq 3$. Then there exits a constant $c>0$ such that
\[
\|u - \mathbf{\Pi}_p u\|_{L^2((0,1)^d)} \leq c\,h^2 \|\Delta u\|_{L^2((0,1)^d)}\quad \foralls u\in H^2((0,1)^d)\cap H^1_0((0,1)^d).
\]
\end{theorem}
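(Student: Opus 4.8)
The plan is an Aubin--Nitsche duality argument that exploits that $\mathbf{\Pi}_p$ is the orthogonal projector for the inner product $(\Delta\cdot,\Delta\cdot)_{L^2(\widehat\Omega)}$, where $\widehat\Omega=(0,1)^d$. Write $e := u - \mathbf{\Pi}_p u \in H^2(\widehat\Omega)\cap H^1_0(\widehat\Omega)$; the task is to bound $\|e\|_{L^2(\widehat\Omega)}$. As a dual problem I take the biharmonic problem with Navier-type boundary data on the box, i.e., find $\phi$ with $\Delta^2\phi = e$ in $\widehat\Omega$ and $\phi = \Delta\phi = 0$ on $\partial\widehat\Omega$. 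On the box this decouples into two consecutive Poisson--Dirichlet problems $-\Delta\psi = e$ and $-\Delta\phi = \psi$ with homogeneous Dirichlet data (so that $\Delta\phi = -\psi$); expanding $e$ and $\psi$ in the sine basis $\prod_{i=1}^d\sin(k_i\pi x_i)$ one reads off directly that $\phi\in H^4(\widehat\Omega)$ with $\|\nabla^4\phi\|_{L^2(\widehat\Omega)}\le c\,\|e\|_{L^2(\widehat\Omega)}$ for an absolute constant $c$ (the convexity of the box is what lets each Poisson solve gain two derivatives).

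Next I derive the duality identity. Since $e\in H^1_0(\widehat\Omega)$ has vanishing trace and $\Delta\phi = -\psi$ vanishes on $\partial\widehat\Omega$, integrating by parts twice in $(e,\Delta^2\phi)_{L^2(\widehat\Omega)}$ eliminates all boundary terms and gives $\|e\|_{L^2(\widehat\Omega)}^2 = (\Delta e,\Delta\phi)_{L^2(\widehat\Omega)}$. Now Galerkin orthogonality of the $H^2$-orthogonal projector, $(\Delta e,\Delta\tilde\phi_h)_{L^2(\widehat\Omega)} = 0$ for all $\tilde\phi_h\in S^d_{p,\ell}\cap H^1_0(\widehat\Omega)$, lets me subtract an arbitrary such $\tilde\phi_h$ and apply the Cauchy--Schwarz inequality, so that $\|e\|_{L^2(\widehat\Omega)}^2 \le \|\Delta e\|_{L^2(\widehat\Omega)}\,\|\Delta(\phi-\tilde\phi_h)\|_{L^2(\widehat\Omega)}$. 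The first factor is controlled for free: $\mathbf{\Pi}_p u$ is the best approximation of $u$ in the $\|\Delta\cdot\|_{L^2(\widehat\Omega)}$-norm, whence $\|\Delta e\|_{L^2(\widehat\Omega)}\le\|\Delta u\|_{L^2(\widehat\Omega)}$.

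It then remains to pick $\tilde\phi_h\in S^d_{p,\ell}\cap H^1_0(\widehat\Omega)$ with $\|\Delta(\phi-\tilde\phi_h)\|_{L^2(\widehat\Omega)}\le c\,h^2\|\nabla^4\phi\|_{L^2(\widehat\Omega)}\le c\,h^2\|e\|_{L^2(\widehat\Omega)}$; inserting this into the previous bound and dividing by $\|e\|_{L^2(\widehat\Omega)}$ yields the claim. The hypothesis $p\ge 3$ enters here exactly as the condition $p+1\ge 4$ required so that degree-$p$ splines approximate the second derivative of an $H^4$ function to order $h^2$. For $\tilde\phi_h$ I would use a boundary-condition-preserving tensor-product spline quasi-interpolant of $\phi$ and take the estimate from the one-dimensional spline approximation results of~\cite{sande2020explicit}, tensorised along the lines of~\cite{sogn2018schur,sogn2019robust}. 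I expect this last step to be the main obstacle: the approximant must actually lie in $S^d_{p,\ell}\cap H^1_0(\widehat\Omega)$ and not merely in $S^d_{p,\ell}$, and the constant must be kept independent of the polynomial degree $p$ --- which is precisely the content of the $p$-robust explicit approximation estimates in those references and of the auxiliary lemmas developed elsewhere in this appendix.
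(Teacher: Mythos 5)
Your duality skeleton is sound as far as it goes: on the box the Navier biharmonic dual problem $\Delta^2\phi=e$, $\phi=\Delta\phi=0$ does decouple into two Dirichlet--Poisson solves, the sine-series argument gives $\|\nabla^4\phi\|_{L^2(\widehat\Omega)}\le c\|e\|_{L^2(\widehat\Omega)}$, the two integrations by parts are justified (the boundary terms vanish because $e\in H^1_0(\widehat\Omega)$ and $\Delta\phi=0$ on $\partial\widehat\Omega$), Galerkin orthogonality of $\mathbf{\Pi}_p$ and Cauchy--Schwarz give $\|e\|^2\le\|\Delta e\|\,\|\Delta(\phi-\tilde\phi_h)\|$, and $\|\Delta e\|\le\|\Delta u\|$ by best approximation. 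The genuine gap is the step you yourself flag as the ``main obstacle'': the existence of $\tilde\phi_h\in S^d_{p,\ell}\cap H^1_0(\widehat\Omega)$ with $\|\Delta(\phi-\tilde\phi_h)\|\le c\,h^2\,\|\nabla^4\phi\|$. Since $\mathbf{\Pi}_p\phi$ is the best approximation in the $\|\Delta\cdot\|$-norm, this is nothing other than the multivariate $H^2$--$H^4$ estimate in the boundary-condition-constrained tensor-product spline space --- which in this paper is established only \emph{after} Theorem~\ref{theo:L2H2dD} (it is the theorem following Theorem~\ref{theo:H2H41D}), and whose proof explicitly invokes Theorem~\ref{theo:L2H2dD} to handle the terms $\|(I-\mathbf{\Pi}^{x_k}_p)\partial_{x_kx_k}u\|$ and $\|(I-\Pi^{x_k}_p)\Delta^{x_k}u\|$. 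So you cannot borrow that result here without circularity, and the references you point to do not hand you the needed bound off the shelf: \cite{sogn2019robust} gives only the univariate $H^2$--$H^4$ estimate for the constrained projector, and tensorizing it requires exactly the $L^2$/$H^2$-stability and cross-derivative bookkeeping (with $p$-independent constants and preservation of the homogeneous Dirichlet condition) that the appendix induction is built to supply. As written, the whole difficulty of the theorem has been deferred to an unproven approximation estimate.

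For comparison, the paper's own proof is a short reduction rather than a duality argument: it quotes \cite[Theorem~9.3]{sogn2018schur}, which is an $L^2$--$H^2$ bound of the required form for the $H^2$-orthogonal projector $\widetilde{\mathbf{\Pi}}_p$ onto a subspace $\widetilde S\subset S^d_{p,\ell}$, and transfers it to $\mathbf{\Pi}_p$ using $\widetilde{\mathbf{\Pi}}_p\mathbf{\Pi}_p=\widetilde{\mathbf{\Pi}}_p$, the triangle inequality and the $H^2$-stability $\|\Delta\mathbf{\Pi}_p u\|\le\|\Delta u\|$. If you want to keep your Aubin--Nitsche route self-contained, you must actually construct the boundary-preserving approximant and prove the $O(h^2)$, $p$-uniform bound for $\|\Delta(\phi-\tilde\phi_h)\|$ from the univariate results --- at which point you have essentially reproduced the appendix's induction, not shortcut it.
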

\begin{proof}
Let $u\in H^2((0,1)^d)\cap H^1_0((0,1)^d)$ be arbitrary but fixed.
\cite[Theorem 9.3]{sogn2018schur} states that
\[
    \|u - \widetilde{\mathbf{\Pi}}_p u\|_{L^2((0,1)^d)} \leq c\,h^2 \|\Delta u\|_{L^2((0,1)^d)},
\]
where $\widetilde{\mathbf{\Pi}}_p: H^2((0,1)^d)\cap H^1_0((0,1)^d) \rightarrow \widetilde{S}$ is the $H^2$-orthogonal projector into some
space $\widetilde{S}\subset S^d_{p,\ell}$. Using $\mathbf{\widetilde{\Pi}}_p \mathbf{\Pi}_p = \mathbf{\widetilde{\Pi}}_p$, the triangle inequality and the stability statement
$\|\Delta \mathbf{\Pi}_p u\|_{L^2((0,1)^d)}\le \|\Delta u\|_{L^2((0,1)^d)}$, we immediately obtain the desired result.
\end{proof}
Next, we provide an $H^2$--$H^4$ error estimate for the univariate case.
\begin{theorem}
\label{theo:H2H41D}
Let $p\in \mathbb{N}$ with $p\geq 3$. Then, 
\[
\|\partial^2 (u - \Pi_p u)\|_{L^2(0,1)} \leq 2h^2 \|\partial^4 u\|_{L^2(0,1)}\quad \foralls H^4(0,1)\cap H^1_0(0,1).
\]
\end{theorem}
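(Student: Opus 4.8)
The plan is to reduce this univariate estimate for the $H^2$-seminorm projection to a standard $L^2$ approximation estimate for splines of degree $p-2$, exploiting the fact that taking two derivatives maps $S_{p,\ell}(0,1)\cap H^1_0(0,1)$ onto the full spline space $S_{p-2,\ell}(0,1)$.

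First I would use that $\Pi_p u$ is, by definition, the best approximation of $u$ in $S_{p,\ell}(0,1)\cap H^1_0(0,1)$ with respect to the seminorm $v\mapsto \|\partial^2 v\|_{L^2(0,1)}$ (which is a genuine norm on $H^2(0,1)\cap H^1_0(0,1)$), so that
\[
\|\partial^2(u-\Pi_p u)\|_{L^2(0,1)}=\min_{v\in S_{p,\ell}(0,1)\cap H^1_0(0,1)}\|\partial^2 u-\partial^2 v\|_{L^2(0,1)}.
\]
Next comes the structural step. By the spline facts recalled before Theorem~\ref{theorm:conformingSplines} (differentiation lowers both the degree and the smoothness index by one), $\partial^2 v\in S_{p-2,p-3,\ell}(0,1)=S_{p-2,\ell}(0,1)$ for every $v\in S_{p,\ell}(0,1)$. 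Conversely, given $g\in S_{p-2,\ell}(0,1)$ I would set $G_1(x):=\int_0^x g$, which lies in $S_{p-1,p-2,\ell}(0,1)$, then $G_2(x):=\int_0^x G_1$, which lies in $S_{p,p-1,\ell}(0,1)=S_{p,\ell}(0,1)$, and finally $v:=G_2-G_2(1)\,x$; since $p\ge 1$, subtracting this linear polynomial keeps $v$ in $S_{p,\ell}(0,1)$, while $v(0)=v(1)=0$ and $\partial^2 v=g$. Hence $v\mapsto\partial^2 v$ maps $S_{p,\ell}(0,1)\cap H^1_0(0,1)$ onto $S_{p-2,\ell}(0,1)$, and therefore
\[
\|\partial^2(u-\Pi_p u)\|_{L^2(0,1)}=\min_{g\in S_{p-2,\ell}(0,1)}\|\partial^2 u-g\|_{L^2(0,1)},
\]
i.e.\ the $L^2(0,1)$ best-approximation error of $\partial^2 u$ by $S_{p-2,\ell}(0,1)$.

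Since $u\in H^4(0,1)$ we have $\partial^2 u\in H^2(0,1)$, and since $p\ge 3$ the space $S_{p-2,\ell}(0,1)$ consists of maximum-smoothness splines of degree $q:=p-2\ge 1$. I would close the argument by invoking the univariate $L^2$ approximation estimate for such spaces, which on a uniform grid of mesh size $h$ reads $\min_{g\in S_{q,\ell}(0,1)}\|w-g\|_{L^2(0,1)}\le (h/\pi)^2\|w''\|_{L^2(0,1)}$ for all $w\in H^2(0,1)$ (cf.~\cite{sande2020explicit}). Applied with $w=\partial^2 u$, so that $w''=\partial^4 u$, and using $(1/\pi)^2\le 2$, this gives the claimed bound (in fact with a constant much smaller than $2$).

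The step that needs care is the structural one: one has to verify that double integration sends a degree-$(p-2)$, $C^{p-3}$ spline to a degree-$p$, $C^{p-1}$ spline on the same grid, and that the linear correction enforcing the homogeneous Dirichlet conditions does not take $v$ out of $S_{p,\ell}(0,1)$. Once this identification is established, the $H^2$-seminorm projection error becomes exactly the $L^2$ best-approximation error of $\partial^2 u$ in the degree-$(p-2)$ spline space, and the only remaining ingredient is a known univariate $L^2$ estimate; the hypothesis $p\ge 3$ is precisely what makes that estimate available at second order.
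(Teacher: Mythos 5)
Your proof is correct, and it is genuinely different from what the paper does: the paper does not prove this estimate at all, it simply cites \cite[Theorem~3]{sogn2019robust}. Your argument is self-contained and rests on two sound observations. First, since $\Pi_p$ is the orthogonal projection with respect to $(\partial^2\cdot,\partial^2\cdot)_{L^2(0,1)}$, which is an inner product on $H^2(0,1)\cap H^1_0(0,1)$, the error $\|\partial^2(u-\Pi_p u)\|_{L^2(0,1)}$ is the best-approximation error in that seminorm. Second, your structural step is valid: differentiation gives $\partial^2\bigl(S_{p,\ell}(0,1)\bigr)\subset S_{p-2,p-3,\ell}(0,1)=S_{p-2,\ell}(0,1)$, and your double-antiderivative construction with the linear correction $G_2-G_2(1)x$ stays in $S_{p,\ell}(0,1)$ (linear polynomials belong to every spline space of degree $p\ge 1$), satisfies the homogeneous boundary conditions, and has second derivative $g$, so $\partial^2$ maps $S_{p,\ell}(0,1)\cap H^1_0(0,1)$ \emph{onto} $S_{p-2,\ell}(0,1)$; hence the projection error equals the unconstrained $L^2$ best-approximation error of $\partial^2 u\in H^2(0,1)$ by maximum-smoothness splines of degree $p-2$. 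The cited Sande--Manni--Speleers estimate $\min_{g}\|w-g\|_{L^2}\le (h/\pi)^2\|w''\|_{L^2}$ requires the order ($r=2$) to be at most degree plus one, i.e.\ $2\le (p-2)+1$, which is exactly the hypothesis $p\ge 3$, so the appeal is legitimate and yields the bound with constant $1/\pi^2$, stronger than the stated $2$. In comparison, the paper's citation route is shorter but opaque and inherits the constant $2$ from the external reference, whereas your reduction makes the mechanism (and the role of $p\ge 3$) explicit and gives a sharper constant; the only steps that needed care --- the surjectivity of $\partial^2$ onto $S_{p-2,\ell}(0,1)$ and the admissibility condition of the $L^2$ estimate --- are both handled.
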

\begin{proof}
See \cite[Theorem~3]{sogn2019robust}. 
\end{proof}
We define projectors $\Pi_p^{x_k}$ on $C^{\infty}((0,1)^d)$ as follows:
\begin{align*}
&(\Pi_p^{x_k})u(x_1,\ldots,x_{k-1},\cdot,x_{k+1},\ldots,x_{d}) := \Pi_p u(x_1,\ldots,x_{k-1},\cdot,x_{k+1},\ldots,x_{d})\\
&\hspace{2.6cm}\forall\, (x_1,\ldots,x_{k-1},x_{k+1},\ldots,x_{d})\in(0,1)^{d-1}\quad \text{for} \quad k = 1,\ldots,d .
\end{align*}
These projectors act on one variable. We also introduce
projectors $\mathbf{\Pi}_p^{x_k}$ that act on every variable except one,
which are given by
\begin{align*}
&(\mathbf{\Pi}_p^{x_k})u(\cdot,\ldots,\cdot,x_k,\cdot,\ldots,\cdot) := \mathbf{\Pi}_p u(\cdot,\ldots,\cdot,x_k,\cdot,\ldots,\cdot)\\
&\hspace{2.6cm}\forall\, x_k\in(0,1)\quad \text{for} \quad k = 1,\ldots,d .
\end{align*}
Similarly we
define a Laplace operator on the form
\[
\Delta^{x_k} = 
    \sum_{i \in\{1,\ldots,d\}\backslash\{k\}}
    \partial_{x_ix_i}, \quad \text{where} \quad \partial_{x_ix_i} := \frac{\partial^2 }{\partial^2_{x_i}}.
\]
We note that all projectors are commutative, cf.~\cite{T:2017MPMG}.
Using this notation, we can extend Theorem~\ref{theo:H2H41D} to an arbitrary number of dimensions.

\begin{theorem}
Let $d\in \mathbb{N}$ and $p\in \mathbb{N}$ with $p\geq 3$. Then, there exits a constant $c>0$ such that
\[
|u - \mathbf{\Pi}_p u|_{H^2((0,1)^d)} \leq c\,h^2 |u|_{H^4((0,1)^d)}\quad \foralls u\in H^4((0,1)^d)\cap H^1_0((0,1)^d).
\]
\end{theorem}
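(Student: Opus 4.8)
The plan is to prove the estimate by induction on the spatial dimension $d$. The base case $d=1$ is exactly Theorem~\ref{theo:H2H41D}, since in one dimension $|u-\Pi_p u|_{H^2(0,1)}=\|\partial^2(u-\Pi_p u)\|_{L^2(0,1)}$. For the inductive step I would exploit that, because $|w|_{H^2((0,1)^d)}=\|\Delta w\|_{L^2((0,1)^d)}$ on $H^2\cap H^1_0$ and $\mathbf{\Pi}_p$ is the orthogonal projector with respect to $(\Delta\cdot,\Delta\cdot)_{L^2}$, the function $\mathbf{\Pi}_p u$ is the best $H^2$-seminorm approximation from $S^d_{p,\ell}\cap H^1_0$. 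Hence, taking the candidate $v_h:=\mathbf{\Pi}_p^{x_d}\Pi_p^{x_d}u=\Pi_p^{x_d}\mathbf{\Pi}_p^{x_d}u$, which lies in $S^d_{p,\ell}\cap H^1_0((0,1)^d)$ because that space is the tensor product of the univariate spaces $S_{p,\ell}(0,1)\cap H^1_0(0,1)$, we have $|u-\mathbf{\Pi}_p u|_{H^2((0,1)^d)}\le\|\Delta(u-v_h)\|_{L^2((0,1)^d)}$.

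Next I would write $\Delta=\partial_{x_dx_d}+\Delta^{x_d}$ and $u-v_h=(I-\mathbf{\Pi}_p^{x_d})u+(I-\Pi_p^{x_d})\mathbf{\Pi}_p^{x_d}u$, producing four terms. Using that $\Pi_p^{x_d}$ commutes with $\partial_{x_ix_i}$ for $i<d$ and with $\Delta^{x_d}$, that $\mathbf{\Pi}_p^{x_d}$ commutes with $\partial_{x_dx_d}$ and with $\Pi_p^{x_d}$, and that both $\Pi_p$ and $\mathbf{\Pi}_p$ are stable in the relevant $H^2$-seminorm (norm at most one, for free), the four terms are estimated as follows: (i) $\partial_{x_dx_d}(I-\mathbf{\Pi}_p^{x_d})u=(I-\mathbf{\Pi}_p^{x_d})\partial_{x_dx_d}u$ is bounded slice-wise in $x_d$ by Theorem~\ref{theo:L2H2dD} in dimension $d-1$, giving $c\,h^2\|\Delta^{x_d}\partial_{x_dx_d}u\|_{L^2}\le c\,h^2|u|_{H^4}$; (ii) $\Delta^{x_d}(I-\mathbf{\Pi}_p^{x_d})u$ equals, slice-wise in $x_d$, the Laplacian of the $H^2$-projection error in dimension $d-1$, so the \emph{inductive hypothesis} applied to each slice $u(\cdot,x_d)\in H^4((0,1)^{d-1})\cap H^1_0$ and then integration in $x_d$ give the bound $c\,h^2|u|_{H^4}$; (iii) $\partial_{x_dx_d}(I-\Pi_p^{x_d})\mathbf{\Pi}_p^{x_d}u=\mathbf{\Pi}_p^{x_d}\partial_{x_dx_d}(I-\Pi_p^{x_d})u$, which I would split as $\|\partial_{x_dx_d}(I-\Pi_p^{x_d})u\|+\|(I-\mathbf{\Pi}_p^{x_d})\partial_{x_dx_d}(I-\Pi_p^{x_d})u\|$ and bound by Theorem~\ref{theo:H2H41D} (slice-wise in $x_d$) and by Theorem~\ref{theo:L2H2dD} in dimension $d-1$ (slice-wise in the first $d-1$ variables, after commuting $\Delta^{x_d}$ inward and using only $H^2$-stability of $\Pi_p$ for the innermost factor), again $c\,h^2|u|_{H^4}$; (iv) $\Delta^{x_d}(I-\Pi_p^{x_d})\mathbf{\Pi}_p^{x_d}u=(I-\Pi_p^{x_d})\Delta^{x_d}\mathbf{\Pi}_p^{x_d}u$ is bounded slice-wise in $x_d$ by the univariate Theorem~\ref{theo:L2H2dD}, the resulting $\|\partial_{x_d}^2\Delta^{x_d}\mathbf{\Pi}_p^{x_d}u\|_{L^2}$ being controlled by $H^2$-stability of $\mathbf{\Pi}_p^{x_d}$ and $\|\Delta^{x_d}\partial_{x_d}^2 u\|_{L^2}\le|u|_{H^4}$. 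Summing the four contributions closes the induction with a constant $c_d$ depending only on $d$ and on the constant in Theorem~\ref{theo:L2H2dD}.

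The main work—and the place where errors are most likely to creep in—is the careful book-keeping of this four-term split: one must repeatedly commute $\Pi_p^{x_j}$ and $\mathbf{\Pi}_p^{x_j}$ past partial derivatives acting on other variables and past each other, and at every step verify that the function to which a slice-wise estimate is applied really lies in the space that estimate requires (e.g.\ $H^4(0,1)\cap H^1_0(0,1)$ for Theorem~\ref{theo:H2H41D}, or $H^2\cap H^1_0$ in $d-1$ variables for Theorem~\ref{theo:L2H2dD}). This is precisely where the box geometry and the homogeneous Dirichlet condition are used: since $u=0$ on each face of $(0,1)^d$, every purely tangential second derivative of $u$ vanishes on that face, so quantities such as $\partial_{x_dx_d}u$, $\Delta^{x_d}u$ and $\mathbf{\Pi}_p^{x_d}u$ inherit homogeneous boundary conditions on the appropriate faces, making the cited estimates applicable to the slices. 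It is worth noting that, in contrast to one's first instinct, $v_h$ is \emph{not} equal to $\mathbf{\Pi}_p u$—the form $(\Delta\cdot,\Delta\cdot)$ does not factor over the tensor-product structure because of the mixed-derivative cross terms—so the argument genuinely has to go through the best-approximation inequality and the dimensional induction rather than through an exact factorization of $\mathbf{\Pi}_p$.
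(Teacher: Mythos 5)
Your proposal is correct and follows essentially the same route as the paper: induction on $d$, the minimization property of $\mathbf{\Pi}_p$ in the Laplacian seminorm with the comparison function $\mathbf{\Pi}_p^{x_d}\Pi_p^{x_d}u$, and the same ingredients (univariate $H^2$--$H^4$ estimate, the $L^2$--$H^2$ estimate of Theorem~\ref{theo:L2H2dD}, $H^2$-stability and commutativity of the projectors). The only cosmetic differences are that the paper symmetrizes over all coordinate directions $k=1,\dots,d$ via $\|\Delta u\|^2\le d\sum_j\|\partial_{x_jx_j}u\|^2$ and argues first for $u\in C^\infty$ with a closing density step, whereas you work with the single direction $x_d$ and verify the slice-wise boundary conditions directly.
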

\begin{proof}
Assume first that $u\in C^\infty((0,1)^d) \cap H^1_0((0,1)^d)$. We prove the statement of the theorem using induction with respect to $d$. Assume that it holds for $d-1$, that is,
\begin{equation}
\label{eq:IndHyp}
    \|\Delta^{x_k} (u-\mathbf{\Pi}^{x_k}_p u)\| \leq c\, h^2 \| \Delta^{x_k} \Delta^{x_k} u\| \quad \text{for} \quad k = 1,\ldots,d.
\end{equation}
 Here and in what follows, all norms are $L^2((0,1)^d)$-norms unless stated otherwise. 
 Now, we show that the statement holds also for $d$. By using the fact that $\mathbf{\Pi}_p$ minimizes the $H^2$-semi-norm (Laplace norm) and $\|\Delta  u\|^2 \leq d \sum^d_{j=1}\|\partial_{x_j x_j}u\|^2$, we get
\begin{align*}
    \|\Delta (u-\mathbf{\Pi}_p u )\|^2 \leq \frac{1}{d}\sum^d_{k=1} \|\Delta (u-\mathbf{\Pi}^{x_k}_p \Pi^{x_k}_p u)\|^2 \leq \sum^d_{k=1} \sum^d_{j=1}\|\partial_{x_j x_j} (u-\mathbf{\Pi}^{x_k}_p \Pi^{x_k}_p u)\|^2.
\end{align*}
We separate this into two groups: $j=k$ and $j\neq k$. We start with $j=k$. Using the triangle inequality, the commutativity of the two projectors, Theorem~\ref{theo:H2H41D}, the $H^2$-stability of $\Pi^{x_k}_p$, and the fact that that $\partial_{x_k x_k}$ and $\mathbf{\Pi}^{x_k}_p$ are commutative, we obtain
\begin{align*}
\sum^d_{k=1}\| \partial_{x_k x_k} (u-\mathbf{\Pi}^{x_k}_p \Pi^{x_k}_p u)\|^2&\leq 2\sum^d_{k=1}\left(\| \partial_{x_k x_k} (u- \Pi^{x_k}_p u)\|^2 + \| \partial_{x_k x_k} \Pi^{x_k}_p (u-\mathbf{\Pi}^{x_k}_p u)\|^2\right)\\
&\leq 2\sum^d_{k=1}\left( 4 h^4\| \partial_{x_k x_k x_k x_k} u\|^2 + \| (I-\mathbf{\Pi}^{x_k}_p ) \partial_{x_k x_k} u\|^2\right).
\end{align*}
Now, we use Theorem~\ref{theo:L2H2dD} to obtain
\begin{align}
\label{eq:proofjisk}
\sum^d_{k=1}\| \partial_{x_k x_k} (u-\mathbf{\Pi}^{x_k}_p \Pi^{x_k}_p u)\|^2 \leq c\, h^4\sum^d_{k=1}\left(\| \partial_{x_k x_k x_k x_k} u\|^2 + \|\Delta^{x_k} \partial_{x_k x_k} u\|^2\right).
\end{align}
For the second group ($j\neq k$), we use the triangle inequality, the induction hypothesis (\ref{eq:IndHyp})
and the $H^2$-stability of $\mathbf{\Pi}^{x_k}$ to obtain
\begin{align*}
\sum_{j\neq k}\|\partial_{x_j x_j} (u-\mathbf{\Pi}^{x_k}_p \Pi^{x_k}_p u)\|^2 &\leq 2\sum_{j\neq k}  \left( \|\partial_{x_j x_j} (u-\mathbf{\Pi}^{x_k}_p  u)\|^2 +\|\partial_{x_j x_j} \mathbf{\Pi}^{x_k}_p(u-\Pi^{x_k}_p  u)\|^2\right)\\
&\leq 2\left( \| \Delta^{x_k}(u-\mathbf{\Pi}^{x_k}_p  u)\|^2 +\| \Delta^{x_k} \mathbf{\Pi}^{x_k}_p(u-\Pi^{x_k}_p  u)\|^2\right)\\
&\leq c\left( h^4\|\Delta^{x_k} \Delta^{x_k}u )\|^2 +\| (I-\Pi^{x_k}_p ) \Delta^{x_k}  u\|^2\right).
\end{align*}
Again, we use Theorem~\ref{theo:H2H41D} and obtain
\begin{equation*}
    \sum^d_{k=1} \sum_{j\neq k}\|\partial_{x_j x_j} (u-\mathbf{\Pi}^{x_k}_p \Pi^{x_k}_p u)\|^2 \leq c\, h^4 \sum^d_{k=1}\left(\|\Delta^{x_k} \Delta^{x_k}u )\|^2 + \| \partial_{x_k x_k} \Delta^{x_k} u\|^2 \right). 
\end{equation*}
Combining this with (\ref{eq:proofjisk}), we finally get
\begin{align*}
    \|\Delta (u-\mathbf{\Pi}_p u )\|^2 &\leq c\, h^4 \sum^d_{k=1}\left(\|\Delta^{x_k} \Delta^{x_k}u )\|^2 + 2\| \partial_{x_k x_k} \Delta^{x_k} u\|^2 + \| \partial_{x_k x_k x_k x_k} u\|^2\right)\\
    &\leq c\, h^4 \| \Delta \Delta u\|^2 \quad \foralls u \in C^\infty((0,1)^d) \cap H^1_0((0,1)^d).
\end{align*}
Note that $\| \Delta \Delta u\| \leq \sqrt{d}\, | u|_{H^4((0,1)^d)} $. Using a standard density argument, we obtain the result also for $u \in H^4((0,1)^d) \cap H^1_0((0,1)^d)$.
\end{proof}
Using interpolation theory (cf.~\cite{bergh2012interpolation}) and the $H^2-H^4$ result above, we obtain a $H^2-H^3$ result, cf. \cite[Theorem 6]{sogn2019robust}.
\begin{theorem}
\label{theo:H2H3dD}
Let $d\in \mathbb{N}$ and $p\in \mathbb{N}$ with $p\geq 3$. Then there exits a constant $c$ such that
\[
|u - \mathbf{\Pi}_p u|_{H^2((0,1)^d)} \leq c\,h |u|_{H^3((0,1)^d)}\quad \foralls u\in H^3((0,1)^d)\cap H^1_0((0,1)^d).
\]
\end{theorem}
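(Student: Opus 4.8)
The plan is to deduce this $H^2$--$H^3$ estimate by interpolating between two estimates that are already in hand: the $H^2$-stability of $\mathbf{\Pi}_p$ and the $H^2$--$H^4$ estimate of the preceding theorem. Write $T := I - \mathbf{\Pi}_p$ and view it as a linear operator on the scale of Sobolev spaces on $(0,1)^d$ with homogeneous Dirichlet trace. Since $\mathbf{\Pi}_p$ is the $H^2$-orthogonal projector with respect to the Laplace semi-inner product, and since $|u|_{H^2((0,1)^d)} = \|\Delta u\|_{L^2((0,1)^d)}$ for functions with vanishing trace, orthogonality gives $|Tu|_{H^2((0,1)^d)} \le |u|_{H^2((0,1)^d)}$; that is, $T$ maps $H^2((0,1)^d)\cap H^1_0((0,1)^d)$ into itself with operator norm at most $1$. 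On the other hand, the theorem just proved states $|Tu|_{H^2((0,1)^d)} \le c\,h^2\,|u|_{H^4((0,1)^d)}$ for all $u \in H^4((0,1)^d)\cap H^1_0((0,1)^d)$, with $c$ independent of $h$ and $p$.

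Next I would invoke the operator interpolation theorem (see \cite{bergh2012interpolation}): if a linear operator is bounded from $X_0$ to $Y$ and from $X_1$ to $Y$, then it is bounded from $[X_0,X_1]_\theta$ to $Y$ with
\[
\|T\|_{[X_0,X_1]_\theta\to Y} \le \|T\|_{X_0\to Y}^{1-\theta}\,\|T\|_{X_1\to Y}^{\theta}.
\]
Applying this with $X_0 = H^2((0,1)^d)\cap H^1_0((0,1)^d)$, $X_1 = H^4((0,1)^d)\cap H^1_0((0,1)^d)$, $Y = H^2((0,1)^d)\cap H^1_0((0,1)^d)$ (equipped with the $H^2$-semi-norm), and $\theta = \tfrac12$ yields
\[
|u - \mathbf{\Pi}_p u|_{H^2((0,1)^d)} \le 1^{1/2}\,(c\,h^2)^{1/2}\,\|u\|_{[X_0,X_1]_{1/2}} = c'\,h\,\|u\|_{[X_0,X_1]_{1/2}}.
\]

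The remaining step, which I expect to be the main obstacle, is the identification of the interpolation space:
\[
\big[H^2((0,1)^d)\cap H^1_0((0,1)^d),\ H^4((0,1)^d)\cap H^1_0((0,1)^d)\big]_{1/2} = H^3((0,1)^d)\cap H^1_0((0,1)^d),
\]
with equivalent norms whose constants depend only on $d$. This combines the classical identity $[H^2((0,1)^d),H^4((0,1)^d)]_{1/2} = H^3((0,1)^d)$ with the fact that the homogeneous Dirichlet trace constraint, being of low order compared with the differentiability indices $2,3,4$, is preserved under interpolation — which one justifies either by factoring $T$ through a bounded right inverse of the trace operator, or by appealing to the general theory of interpolation of subspaces cut out by boundary conditions. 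Granting this, and using $\|u\|_{H^3((0,1)^d)} \le c\,|u|_{H^3((0,1)^d)}$ on $H^1_0$ via the Poincaré inequality, the two displays above give $|u - \mathbf{\Pi}_p u|_{H^2((0,1)^d)} \le c\,h\,|u|_{H^3((0,1)^d)}$.

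Finally I would record that all constants stay independent of $h$ and $p$: the stability bound is exactly $1$, the $H^2$--$H^4$ constant is $h,p$-independent by hypothesis, the interpolation inequality contributes no new constant, and the norm equivalence in the interpolation-space identification depends only on the fixed domain $(0,1)^d$. Everything outside the interpolation-space identification is routine bookkeeping.
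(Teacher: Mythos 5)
Your route is essentially the paper's own: the paper proves the $H^2$--$H^4$ bound and then obtains the $H^2$--$H^3$ estimate precisely by interpolating it against the $H^2$-stability of $\mathbf{\Pi}_p$, delegating the interpolation-space details to \cite{bergh2012interpolation} and to \cite[Theorem 6]{sogn2019robust}, so your proposal just spells out what the paper cites. One small repair is needed in your last step: the bound $\|u\|_{H^3((0,1)^d)} \le c\,|u|_{H^3((0,1)^d)}$ on $H^3\cap H^1_0$ is not a consequence of the plain Poincar\'e inequality and in fact fails for $d=1$ (take $u(x)=x(1-x)$, which has vanishing $H^3$-seminorm); it holds for $d\ge 2$ by a compactness (Bramble--Hilbert type) argument, and in the exceptional low-degree cases the conclusion is saved instead by noting that such polynomials lie in $S^d_{p,\ell}\cap H^1_0$ and are reproduced by $\mathbf{\Pi}_p$, so the left-hand side vanishes.
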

We also use interpolation theory and the $L^2-H^2$ result (Theorem~\ref{theo:L2H2dD}) to obtain  a $H^1-H^2$ result.
\begin{theorem}
\label{theo:H1H2dD}
Let $d\in \mathbb{N}$ and $p\in \mathbb{N}$ with $p\geq 3$. Then, there exits a constant $c$ such that
\[
|u - \mathbf{\Pi}_p u|_{H^1((0,1)^d)} \leq c\,h |u|_{H^2((0,1)^d)}\quad \foralls u\in H^2((0,1)^d)\cap H^1_0((0,1)^d).
\]
\end{theorem}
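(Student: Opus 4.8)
The statement to prove is Theorem~\ref{theo:H1H2dD}: the $H^2$-orthogonal projector $\mathbf{\Pi}_p$ onto $S^d_{p,\ell}\cap H^1_0$ satisfies an $H^1$--$H^2$ approximation estimate $|u - \mathbf{\Pi}_p u|_{H^1} \le c\,h\,|u|_{H^2}$. The plan is to obtain this by \emph{operator interpolation} between the two endpoint estimates already established in this appendix, exactly as the sentence preceding the theorem advertises. The first endpoint is the $L^2$--$H^2$ estimate of Theorem~\ref{theo:L2H2dD}, namely $\|(I-\mathbf{\Pi}_p)u\|_{L^2} \le c\,h^2\|\Delta u\|_{L^2} \le c\,h^2|u|_{H^2}$ for $u\in H^2\cap H^1_0$; the second endpoint is the trivial $H^2$--$H^2$ stability estimate $|(I-\mathbf{\Pi}_p)u|_{H^2} \le |u|_{H^2}$, which is immediate because $\mathbf{\Pi}_p$ is the $H^2$-semi-norm-orthogonal (i.e.\ Laplacian-orthogonal) projector and on $H^2\cap H^1_0$ the $H^2$-semi-norm equals $\|\Delta\cdot\|_{L^2}$ (as noted at the start of the appendix). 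So the operator $I-\mathbf{\Pi}_p$ maps $H^2\cap H^1_0 \to L^2$ boundedly with norm $\le c\,h^2$ and maps $H^2\cap H^1_0 \to H^2$ boundedly with norm $\le 1$.

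\textbf{Key steps.} First I would fix the interpolation couples: on the target side, the pair $(L^2((0,1)^d),\,H^2((0,1)^d))$, whose real or complex interpolation space at parameter $\theta=\tfrac12$ is (equivalent to) $H^1((0,1)^d)$ by standard Sobolev-space interpolation theory \cite{bergh2012interpolation}. On the source side one takes the single space $H^2((0,1)^d)\cap H^1_0((0,1)^d)$ (interpolating a space with itself gives the same space, with equivalent norm). Second, I would invoke the interpolation theorem for linear operators: since $T := I-\mathbf{\Pi}_p$ is bounded from $H^2\cap H^1_0$ into $L^2$ with norm $\le c_0 h^2$ and into $H^2$ with norm $\le c_1$, it is bounded from $H^2\cap H^1_0$ into the interpolated target space $[L^2,H^2]_{1/2}\simeq H^1$ with norm $\le c\, (c_0 h^2)^{1/2}(c_1)^{1/2} = c\,h$. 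This yields $|(I-\mathbf{\Pi}_p)u|_{H^1} \le \|(I-\mathbf{\Pi}_p)u\|_{H^1} \le c\,h\,\|u\|_{H^2\cap H^1_0}$. Third, I would clean up the right-hand side: because $u\in H^1_0$, the Poincar\'e inequality lets one bound $\|u\|_{H^2}$ (and the intersection norm) by the $H^2$-semi-norm $|u|_{H^2}$ up to a constant depending only on the domain, giving the stated $|u-\mathbf{\Pi}_p u|_{H^1}\le c\,h\,|u|_{H^2}$. This mirrors verbatim the argument structure used just above for Theorem~\ref{theo:H2H3dD} (there interpolating the $H^2$--$H^4$ and $H^2$--$H^2$ estimates to get $H^2$--$H^3$).

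\textbf{Main obstacle.} The only genuinely delicate point is the identification $[L^2((0,1)^d),\,H^2((0,1)^d)]_{1/2} = H^1((0,1)^d)$ with equivalence of norms, and the compatibility of the interpolation-space construction with the boundary condition sitting in the source space $H^2\cap H^1_0$. The target couple $(L^2,H^2)$ has no boundary condition imposed, so this identification is the textbook one and poses no issue; the subtlety is merely bookkeeping — one must make sure that $T=I-\mathbf{\Pi}_p$ is regarded as a single linear operator on the fixed space $H^2\cap H^1_0$ taking values in the couple $(L^2,H^2)$, so that the abstract interpolation theorem applies directly without needing to interpolate the $H^1_0$ constraint. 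Everything else (Poincar\'e, the semi-norm/Laplacian identity, $H^2$-stability of the orthogonal projector) is routine and already recorded in the excerpt. I would therefore expect the proof to be a short two- or three-line argument citing Theorem~\ref{theo:L2H2dD}, the $H^2$-stability of $\mathbf{\Pi}_p$, and \cite{bergh2012interpolation}.
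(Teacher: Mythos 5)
Your proposal is correct and follows exactly the route the paper takes: the paper states the result as a consequence of operator interpolation between the $L^2$--$H^2$ estimate of Theorem~\ref{theo:L2H2dD} and the $H^2$-stability of the projector, citing \cite{bergh2012interpolation}, just as you do (the paper gives no further details). Your additional bookkeeping — fixing the source space $H^2\cap H^1_0$, identifying $[L^2,H^2]_{1/2}$ with $H^1$, and using the equivalence of the $H^2$-seminorm with the full norm on $H^2\cap H^1_0$ — fills in precisely the steps the paper leaves implicit.
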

By combining these auxiliary results, we can prove Theorem~\ref{theo:errorEst}:
\begin{proof}
Inequality~(\ref{eq:AppH2H3}) is proven in Theorem~\ref{theo:H2H3dD}.
For the inequality (\ref{eq:AppH1H3}), we combine Theorem~\ref{theo:H1H2dD} and Theorem~\ref{theo:H2H3dD} as follows:
\begin{align*}
    \|\nabla (I-\mathbf{\Pi}_p)u \| = \|\nabla (I-\mathbf{\Pi}_p)(I-\mathbf{\Pi}_p)u \| \leq  c\,h \|\nabla^2 (I-\mathbf{\Pi}_p)u \| \leq c\, h^2 \|\nabla^3 u\|,
\end{align*}
where $\|\cdot\|$ again denotes the $L^2$-norm.
Finally, the inequality~(\ref{eq:AppL2H3}) is proven by combining Theorem~\ref{theo:L2H2dD} and Theorem~\ref{theo:H2H3dD} as follows:
\begin{align*}
    \|(I-\mathbf{\Pi}_p)u \| = \|(I-\mathbf{\Pi}_p)(I-\mathbf{\Pi}_p)u \| \leq  c\,h^2 \|\nabla^2 (I-\mathbf{\Pi}_p)u \| \leq c\, h^3 \|\nabla^3 u\|.
\end{align*}
This concludes the proof.
\end{proof}

\bibliographystyle{siamplain}
\bibliography{bibliography}
\end{document}